\newtheorem{theorem}{Theorem}
\newtheorem*{theorem*}{Theorem}
\newtheorem{corollary}{Corollary}[theorem]
\newtheorem{proposition}{Proposition}
\newtheorem*{definition*}{Definition}
\newtheorem*{example*}{Example}
\newtheorem{remark}{Remark}
\newtheorem*{conditional*}{Conditional}
\newtheorem*{remark*}{Remark}
\newcommand\Tstrut{\rule{0pt}{3.75ex}}       
\newcommand\Bstrut{\rule[-3.5ex]{0pt}{0pt}} 
\newcommand{\TBstrut}{\Tstrut\Bstrut} 
\definecolor{Gray}{gray}{0.85}
\definecolor{LightCyan}{rgb}{0.88,1,1}
\newcolumntype{a}{>{\columncolor{Gray}}c}
\newcolumntype{b}{>{\columncolor{Gray}}r}
\newcolumntype{d}{>{\columncolor{white}}c}
\providecommand{\keywords}[1]
{
  \small	
  \textbf{\textit{Keywords---}} #1
}
\title{
Convolutions and Mixtures of Gamma, Stable and Mittag-Leffler Distributions}
\author{Nomvelo Karabo Sibisi \\{\small {\tt sbsnom005@myuct.ac.za}}}
\date{\today}
\begin{document}
\maketitle
\thispagestyle{empty}


\begin{abstract}
\noindent
This paper uses convolutions  of  the   gamma  density and  the one-sided  stable density  
 to construct  higher level densities.
The approach is applied to constructing a 4-parameter  Mittag-Leffler density,
whose Laplace transform is a  corresponding  Mittag-Leffler  function,
which is completely monotone (CM) by construction.
Laplace  transforms of mixtures of the stable densities with respect to the 4-parameter Mittag-Leffler distribution 
are compositions of the  Mittag-Leffler  functions with   Bernstein functions,
thereby generating  a rich family of CM variants of the base CM Mittag-Leffler  functions, 
including known instances as special cases.
\end{abstract}
\keywords{
Gamma, beta, stable,  Mittag-Leffler,  positive Linnik distributions;  
 Mittag-Leffler \\ functions;  
convolution, mixing;  Bernstein  functions;
complete monotonicity. 
}
 

\section{Introduction}
\label{sec:introduction}

The   goal of this paper is to use 
convolutions  and mixtures involving   the familiar  gamma   and   one-sided  stable distributions   
 to construct  higher level distributions.
 We apply this approach to constructing  Mittag-Leffler distributions,
 whose Laplace-Stieltjes transforms are  Mittag-Leffler functions in various guises.
This is  a `forward problem' that involves constructing  distributions and  evaluating  their transforms,
 as opposed to  an `inverse problem' that seeks  to infer densities from given functions through complex analytic  inversion
 (Laplace or Stieltjes inversion, depending on the   transform linking the desired density to the function at hand).
 Such  inversion cannot, by itself, ensure that the  result will be nonnegative,
  supplementary conditions typically need to be imposed.
 Common though  inversion methods may be, our primary objective  strictly remains  one of direct construction  of distributions.
 In any event,  Mittag-Leffler distributions have applications in their own right to model, for example, random partitions of the integers
 or the growth of random trees and graphs.
  In such context, their property as  generators of  Mittag-Leffler functions through Laplace-Stieltjes transforms may only  be of passing interest.
 
It is well-known  that  the Mittag-Leffler function $E_\alpha(-x^\alpha)$  for  $0<\alpha<1$ is   the  Laplace-Stieltjes transform of a 
distribution with a straightforward  density  
 (equivalently,  $E_\alpha(-x^\alpha)$  is the ordinary Laplace transform of said  density). 
It is also   known 
that  $E_\alpha(-x)$  for  $0<\alpha<1$ is  the Laplace-Stieltjes transform 
of a  distribution known as the Mittag-Leffler distribution, 
which  has a known   density too. 
The primary question  is whether  the relationship  might also hold for some other  power of  of $x$ ($\sigma$, say),  
at least for the same restriction $0<\alpha<1$ suggested by the two examples cited.
To be precise: \\
a) Is $E_\alpha(-x^\sigma)$ for  certain  $\sigma\notin \{1,\alpha\}$ also the Laplace-Stieltjes transform of a  distribution? \\ 
b) If (a) holds, what is the explicit form of the distribution associated with $E_\alpha(-x^\sigma)$?

Part~(a) of the question   has a simple answer, which may be looked upon as arising from the  concept of completely monotone functions.
A function  is said to be completely monotone (CM) if  it  and all its derivatives of alternating sign are nonnegative.
By a theorem due to Bernstein, a function is CM if and only if it is  the Laplace-Stieltjes transform of a  distribution.
$E_\alpha(-x)$ and  $E_\alpha(-x^\alpha)$ are  thus examples of  CM functions.
Furthermore, by a property  of CM functions, if  $E_\alpha(-x)$ is CM so is the composition $E_\alpha(- \lambda x^\sigma)$  for $\lambda>0$ and
$0<\sigma<1$. 
Hence we  have the answer to (a): there exists a distribution whose  Laplace-Stieltjes transform is $E_\alpha(-x^\sigma)$ for 
 $0<\alpha<1$ and  $0<\sigma<1$.
 This clearly  includes the  case $\sigma=\alpha$,    {\it i.e.}\  $E_\alpha(-x^\alpha)$ is CM because $E_\alpha(-x)$ is CM.
It is also  true  that $E_\alpha(-(x^{\alpha})^\sigma)$ is CM because $E_\alpha(-x^\alpha)$ is CM, but this is narrower since 
we cannot deduce  that $E_\alpha(-x)$ is CM by using the CM nature of $E_\alpha(-x^\alpha)$ as a  starting point.
 
 Part~(b) seeks to go beyond the existence statement and determine the explicit distribution associated with $E_\alpha(-\lambda x^\sigma)$.
 This is core to what this paper  seeks to achieve:  explicit construction of distributions  that  mirror 
 corresponding  CM functions.
 In particular, since $E_\alpha(- \lambda x^\sigma)$ is CM by virtue of the fact that $E_\alpha(-x)$ is CM,
 we shall establish that the distribution whose  Laplace-Stieltjes transform is $E_\alpha(-\lambda x^\sigma)$ 
 arises from the  Mittag-Leffler distribution, whose  Laplace-Stieltjes transform is $E_\alpha(-x)$. 
 This is despite the fact that  the well-known case $E_\alpha(-\lambda x^\alpha)$ has been  studied directly and independently  of the
 $E_\alpha(-x)$ case in the literature.
 
  \subsection{Contribution}
\label{sec:contribution}

 In this paper, convolutions  and mixtures of gamma and stable distributions are the fundamental concepts  that underpin all higher distributions.
 As a starting point, we will use this methodology  to construct  the  Mittag-Leffler distribution, 
 which involves  one parameter $0<\alpha<1$ 
 (we can  include the ends of this interval with an extended definition of the stable density, but this is not a key conceptual  point here).
It  then becomes evident how to generalise to the construction  of multi-parameter Mittag-Leffler distributions.

For a   `quick read',  Tables~\ref{table:MLDensities},~\ref{table:ML3parDensities},~\ref{table:ML4parDensities},~{\ref{table:ML4parBetaML}   
summarise  the   results derived  in the paper,  including known instances as special cases.
However, the  methodology  presented in the paper, based on convolution and mixing  of basic distributions, is more broadly applicable than 
the illustrative examples shown in the Tables. 

\section{Background}
\label{background}

\subsection{Mittag-Leffler Functions}
\label{sec:MLfunction}

Gorenflo {\it et al.}~\cite{gorenflo2014mittag} is a comprehensive treatise on Mittag-Leffler functions and their applications.
The 3-parameter Mittag-Leffler function, also known as the Prabhakar function,
has an infinite series representation for $z\in \mathbb{C}$
\begin{align}
E^\gamma_{\alpha,\beta}(z) &= \frac{1}{\Gamma(\gamma)} 
   \sum_{k=0}^\infty \frac{\Gamma(\gamma+k)}{k!\,\Gamma(\alpha k+\beta)}\, z^k  \qquad {\rm Re}(\alpha)>0, \, {\rm Re}(\beta)>0, \, \gamma > 0
\label{eq:ML3parseries}
\end{align}
The termwise Laplace transform of $x^{\beta-1}E^\gamma_{\alpha,\beta}(-\lambda x^\alpha)$  $(\lambda>0)$ sums to
\begin{align}
\int_0^\infty e^{-sx} \, x^{\beta-1}E^\gamma_{\alpha,\beta}(-\lambda x^\alpha) \,dx  
  &= \frac{s^{\alpha\gamma-\beta}}{(\lambda+s^\alpha)^\gamma} 
\label{eq:LaplaceML3par}
\end{align}
As discussed in Gorenflo {\it et al.}~\cite{gorenflo2014mittag} and numerous other references such as the survey by Giusti {\it et al.}~\cite{Giusti}, 
there are  many other attributes of Mittag-Leffler functions, often related to
fractional  calculus and its applications, but these  will not be of direct interest here.

What is key  in this paper is that we shall often interpret  $E_\alpha(-y x^\alpha)$ 
as a function of two variables $x$ and $y$, not merely as  a function of $x$ for some fixed parameter $y$.
Thus, we shall   typically work with $E_\alpha(-y x^\alpha)$ as a  function of $x$ given $y$ or $y$ given $x$.

\subsection{Mittag-Leffler Distributions}
\label{sec:MLdistributions}

Pollard~\cite{PollardML}  used  complex analytic methods to derive the distribution  
$P_{\alpha}(t)$ $(t>0)$ for $0<\alpha<1$, with  Laplace-Stieltjes transform $E_\alpha(-x)$, $x\ge0$.
In a probabilistic  derivation, Feller~\cite[XIII.8]{Feller2} considered the two-dimensional Laplace  transform of the 
one-sided  $\alpha$-stable distribution with an associated  scale factor. 
$P_{\alpha}$ has  become known as the (1-parameter) Mittag-Leffler   distribution.

In the context of studying  random partitions of the integers,
Pitman~\cite[Theorem~3.8]{Pitman_CSP}  derived the 2-parameter Mittag-Leffler   distribution $P_{\alpha,\theta}$ 
where $\alpha$ is again  the index of the one-sided $\alpha$-stable  distribution $(0<\alpha<1)$
and  $\theta>-\alpha$ is associated with  ``polynomial tilting" of the $\alpha$-stable density.
$P_{\alpha,\theta}$  is also referred to as the generalized Mittag-Leffler distribution and alternatively 
denoted by ${\rm ML(\alpha,\theta})$ 
(Goldschmidt and Haas~\cite{GoldschmidtHaas}, Ho~{\it  et al.}~\cite{HoJamesLau}). 
The Mittag-Leffler distribution is the $\theta=0$ case $P_{\alpha}\equiv{\rm ML}(\alpha,0)$.
A derivation of ${\rm ML(\alpha,\theta})$   due to Janson~\cite{Janson} takes  the form of  a limiting distribution of  a  P{\' o}lya urn scheme
(also see Flajolet~{\it  et al.}~\cite{Flajolet}).
The same limiting distribution arises from the Chinese restaurant process (Pitman~\cite[3.1]{Pitman_CSP}), 
which is  intimately associated with random partitions and  the 
Poisson-Dirichlet process ${\rm PD(\alpha,\theta})$ of Pitman and Yor~\cite{PitmanYor}.
Pursuing the Chinese restaurant metaphor, M\"{o}hle~\cite{Mohle} added a cocktail bar  in order to construct a
3-parameter Mittag-Leffler distribution  ${\rm ML(\alpha,\beta,\gamma})$. 

Mittag-Leffler distributions have finite moments of all order, thereby uniquely determining   the distributions.
Hence the distributions   are  often described by specification of their moments.
By studying such moments, M\"{o}hle~\cite[Corollary~3]{Mohle} proved that if $X$ has the 3-parameter Mittag-Leffler distribution,
then $X$ is equal in distribution  to the product  $YZ$ of independent random variables $Y,Z$,
where $Y$ is beta distributed and $Z$ has the 2-parameter Mittag-Leffler distribution
(also see Banderier~{\it  et al.}~\cite{Banderier}). 
In fact,    Bertoin and Yor~\cite[Lemma~6]{BertoinYor} gave an earlier proof of the identity 
without  explicit mention of  the  Mittag-Leffler distribution beyond stating its moments.
 Explicit  mention was  subsequently made   by  Ho~{\it  et al.}~\cite[Proposition ~2.1,~Remark~2.3]{HoJamesLau}.

In another  context, Mittag-Leffler distributions  play a fundamental role in the probabilistic modelling of the evolution of  random trees and graphs, 
often used as models of real-world network behaviour. 
In the study of stable trees (Goldschmidt and Haas~\cite{GoldschmidtHaas}, Rembart and Winkel~\cite{RembartWinkel2018, RembartWinkel2023})
and stable graphs (Goldschmidt~{\it  et al.}~\cite{ Goldschmidt22}), 
the distributions that feature as   building  blocks are beta, generalised
Mittag-Leffler ${\rm ML(\alpha,\theta})$, Dirichlet and Poisson-Dirichlet  ${\rm PD(\alpha,\theta})$.
James~\cite{James2015}, S\'{e}nizergues~\cite{Senizergues} discussed the appearance of  
${\rm ML(\alpha,\theta})$ as limit distributions  in the   growth of random graphs
 inspired by  the preferential  attachment model due to Barab\'{a}si and  Albert~\cite{BarabasiAlbert}.
 ${\rm ML(\alpha,\theta})$ also arises in a random walk model of  network formation by Bloem-Reddy and Orbanz~\cite{BloemReddyOrbanz}.
 
In yet another  context, the complete memory  elephant random walk with  delays converges to the Mittag-Leffler distribution
 (Bercu~\cite{Bercu2022}, Janson~\cite[Example~14.10]{Janson2024}, Miyazaki and Takei~\cite{MiyazakiTakei}).
  
 More generally, Mittag-Leffler distributions  belong to a class that Janson~\cite{JansonProbSurv} referred to as
 distributions with ``moments of Gamma type''.
In an addendum, Janson~\cite{JansonProbSurvA}  added that the densities of this class were $H$ functions 
(Fox~\cite{Fox}, Mathai {\it  et al.}~\cite{Mathai}), with  Meijer $G$ functions as a subclass.
The distributions are  often described  in terms of their moments, thereby facilitating the use of  
``Mellin transform techniques to obtain expansions or asymptotics of the density function"~\cite{JansonProbSurv}.
Pursuing a similar theme, Beghin {\it  et al.}~\cite{BeghinFoxH} explored Fox-$H$  densities with moments of all order,
whose Laplace transforms are generalised Wright functions. 
Under a particular  choice of parameters, Beghin {\it  et al.}~\cite[Example~5.5]{BeghinFoxH} 
 identified a Fox-$H$ density whose Laplace transform is 
$\Gamma(\beta)E^\gamma_{\alpha,\beta}(-x)$ for $0<\alpha<1$ and $ \beta-\alpha\gamma=1-\alpha$.

 It thus seems  worthwhile to study Mittag-Leffler distributions in  their own right -- in the case of this paper, from a 
 perspective of convolutions and mixtures of distributions rather than as limiting distributions of random selection processes.
 In addition, the paper explores in some detail  the duality, mediated by the Laplace-Stieltjes transform, 
 between  Mittag-Leffler distributions and corresponding  Mittag-Leffler functions in  various guises.
Notably, the Laplace-Stieltjes transform  of  ${\rm ML(\alpha,\theta})$ 
is $\Gamma(1+\theta)E^{1+\theta/\alpha}_{\alpha,1+\theta}(-x)$ (Ho {\it  et al.}~\cite{HoJamesLau},  James~\cite{James_Lamperti}).

A key highlight of this paper is the derivation of Mittag-Leffler densities using  straightforward  properties of convolutions, 
without any need to appeal to 
complex analytic inversion  or  special function representation.

\subsubsection{4-parameter Mittag-Leffler Distribution}
\label{sec:4par}

Pursuing the approach  of mixtures  and convolutions of stable and gamma distributions, 
this paper derives a 4-parameter Mittag-Leffler distribution  
$P_{\alpha,\beta,\gamma,\theta}\equiv{\rm ML(\alpha,\beta,\gamma,\theta})$
for $0<\alpha<1, \beta>\alpha\gamma,   \theta>-\alpha\gamma$,
whose Laplace-Stieltjes transform  is $\Gamma(\beta+\theta)E^{\gamma+\theta/\alpha}_{\alpha,\beta+\theta}(-x)$ $(x\ge0)$.
The difference $\beta-\alpha\gamma>0$ plays an important  role.
As we shall demonstrate, for $\beta-\alpha\gamma=1-\alpha$ the  convolution essentially  simplifies to the  $\alpha$-stable density  
(Proposition~\ref{prop:stable} below gives the precise statement).
${\rm ML(\alpha,\theta})\equiv {\rm ML(\alpha,1,1,\theta})$ is one   instance of $\beta-\alpha\gamma=1-\alpha$.
 
With the parameter restrictions stated, $E^{\gamma+\theta/\alpha}_{\alpha,\beta+\theta}(-x)$ is CM.
Hence 
$E^{\gamma+\theta/\alpha}_{\alpha,\beta+\theta}(-\lambda x^\sigma)$ $(\lambda>0, 0<\sigma<1)$ is also CM.
We derive the explicit  form of the associated  density,
a mixture of a $\sigma$-stable density with respect to  ${\rm ML}(\alpha,\beta,\gamma,\theta)$,
with Laplace transform $\Gamma(\beta+\theta)E^{\gamma+\theta/\alpha}_{\alpha,\beta+\theta}(-\lambda x^\sigma)$.
Through multiplication with other CM functions and 
iterative composition, we may generate a cascade of  CM functions
as Laplace transforms of associated explicit densities. 
We discuss the general construction principle but demonstrate  only a limited selection  of such possibilities. 

\subsection{Other Perspectives}
\label{sec:other}

\subsubsection{Other Definition of Mittag-Leffler Distributions}
\label{sec:otherMLdistributions}
Pillai~\cite{Pillai} used the term `Mittag-Leffler distribution' to refer to $1-E_\alpha(-\lambda x^\alpha)$ $(\lambda>0, 0<\alpha\le1)$
(Pillai defined it with $\lambda=1$),
with Laplace-Stieltjes transform 
\begin{align}
\int_0^\infty e^{-sx} \, d(1-E_\alpha(-\lambda x^\alpha)) &= 1- \frac{s^\alpha}{\lambda+s^\alpha}  = \frac{\lambda}{\lambda+s^\alpha} 
\label{eq:LStPillai}
\end{align}
In this context, `generalized Mittag-Leffler distribution' is taken to refer to the distribution with  Laplace-Stieltjes transform
$\lambda^\gamma/(\lambda+s^\alpha)^\gamma$, $\gamma>0$ 
(Fujii~\cite{Fujii}, Haubold {\it  et al.}~\cite{Haubold}, Jose {\it  et al.}~\cite{Jose}, 
 Korolev {\it  et al.}~\cite{Korolev2020}, Korolev and Zeifman}~\cite{Korolev2016})).
Pakes~\cite{Pakes95}  helpfully referred to  this distribution
as   the  generalized positive Linnik distribution that we may denote by ${\rm PL}(\alpha,\gamma,\lambda)$  
(also see Barabesi {\it  et al.}~\cite{Barabesi}).
For $\gamma=1$, ${\rm PL}(\alpha,\lambda)\equiv {\rm PL}(\alpha,1,\lambda)$, 
explicitly ${\rm PL}(x\vert \alpha,\lambda)=1-E_\alpha(-\lambda x^\alpha)$.
Despite this,  the use of `Mittag-Leffler distributions'  for the positive Linnik distributions persists in the literature.
 Pakes~\cite{Pakes95}  and  several other authors have commented on this ``unfortunate attribution", 
which  causes avoidable  confusion with the Mittag-Leffler distributions of~\ref{sec:MLdistributions}.

We shall reserve the term  `Mittag-Leffler distributions' exclusively  for the distributions discussed  in \ref{sec:MLdistributions} above.
 For completeness, we shall comment on   how the positive Linnik distribution arises in the context of our discussion.

\subsubsection{Complex Analytic  Inversion}
\label{sec:otherComplex}

There is a body of literature that places emphasis on  exploring  the CM character  of Mittag-Leffler  functions, with the fact that they are   Laplace transforms
 of nonnegative functions  seeming to arise   as a means to that end  rather than as the primary objective.
 Such literature might thus be described as more  analytic  than probabilistic. 

Pollard~\cite{PollardML}  took the complex analytic  Laplace inverse of the 1-parameter instance of (\ref{eq:LaplaceML3par}) as the starting point 
to establish the CM character of  $E_\alpha(-x)$.
Inspired  by this, 
G\'{o}rska {\it  et al.}~\cite{Gorska} proved that $E^\gamma_{\alpha,\beta}(-x)$ is CM, 
{\it i.e.} that it is  the Laplace  transform of a  nonnegative function 
for  $0<\alpha\le1,\beta>\alpha\gamma,\gamma>0$ 
(also see  Tomovski {\it et al.}~\cite[Theorem~2]{Tomovski}).
 G\'{o}rska {\it  et al.}~ \cite[Remark~2.2]{Gorska}  noted that
 the CM nature of  $E^\gamma_{\alpha,\beta}(-x)$  implies that: 
(a) $E^\gamma_{\alpha,\beta}(-x^\alpha)$ is CM by the composition property of CM functions and 
(b) $x^{\beta-1}E^\gamma_{\alpha,\beta}(-x^\alpha)$ is CM by the  product property, with the additional  constraint $\beta\le1$.

The CM character  of (b) had previously been investigated  by 
Capelas de Oliveira {\it et al.}~\cite{Oliveira} using  Stieltjes inversion of the  Laplace  transform  (\ref{eq:LaplaceML3par})
of  $x^{\beta-1}E^\gamma_{\alpha,\beta}(-x^\alpha)$.
  Stieltjes inversion by itself 
  (Titchmarsh~\cite[11.8 (p318)]{Titchmarsh}, 
Widder~\cite[VIII.7 (p341)]{Widder}) 
 does not   shed light on whether the inverse is a nonnegative function,
 the paper determined the required parameter restrictions 
 $0<\alpha\le1$, $0<\alpha\gamma\le\beta\le1$
 using a theorem  due to Gripenberg~\cite{Gripenberg}. 
Mainardi and Garrappa~\cite{MainardiGarrappa}  pursued  the same inversion approach, but with a variation  on the argument 
leading to the same parameter restrictions.

The condition $\beta\le1$  arises from treating $x^{\beta-1}$ and $E^\gamma_{\alpha,\beta}(-x^\alpha)$ 
as separate  CM functions, so that the product is also CM.
Amongst other things, we will show that $\beta\le1$ is, in fact, redundant for  $x^{\beta-1}E^\gamma_{\alpha,\beta}(-x^\alpha)$ 
to be the Laplace transform of a density, {\it i.e.}\ for it to be CM.

\section{Preliminaries}
\label{sec:preliminaries}

Let $\psi(x),\, x>0$ be the Laplace-Stieltjes transform of a distribution function $F(t)$, $t>0$
\begin{align}
\psi(x) &= \int_0^\infty e^{-xt} \, dF(t)
\label{eq:LStransform}
\end{align}
If $dF(t)=f(t)dt$ for a density $f(t)$, 
 then $\psi(x)$ is equivalently the (ordinary) Laplace transform of $f(t)$.
$\psi(x)$ of the form~(\ref{eq:LStransform}) is said to be completely monotone (CM), 
{\it i.e.} its  derivatives of alternating sign are nonnegative: $(-1)^n\psi^{(n)}(x)\ge0, n\ge0$.
In fact, by a theorem due to Bernstein (Feller~\cite[XIII.4]{Feller2}, Widder~\cite[p160]{Widder}),  
a function is CM if and only if  it is of the form~(\ref{eq:LStransform}).
The topic of completely monotone functions  is well-studied 
({\it e.g.}\ Feller~\cite[XIII.4]{Feller2}, Schilling~{\it et al.}~\cite{Schilling}, Widder~\cite[IV]{Widder}).
Properties of   interest here are:

\begin{proposition}[Product]
If $\psi_1(x)$ and  $\psi_2(x)$ are {\rm CM}, so is their product $\psi_1(x)\psi_2(x)$.

\label{prop:product}
\end{proposition}
\begin{proposition}[Composition]
If $\psi(x)$ is {\rm CM},  so is the composition $\psi(\eta(x))$ where $\eta(x)$ is a nonnegative function with a {\rm CM} derivative, 
also known as a Bernstein function.
\label{prop:composition}
\end{proposition}

We study properties of  distributions corresponding to  the properties above for  CM functions.
The product case is straightforward. 
If the CM functions $\psi_1(x), \psi_2(x)$ are Laplace transforms of densities $f_1(t),f_2(t)$ respectively,
then the convolution theorem states that the product $\psi_1(x)\psi_2(x)$ is  the Laplace transform of  
the convolution density $\{f_1 \star f_2\}(t)$ 
\begin{align}
\{f_1 \star f_2\}(t) &= \int_0^t f_1(t-u)f_2(u)\, du 
= \int_0^t f_2(t-u)f_1(u)\, du 
\label{eq:conv}
\end{align}

The composition case is more subtle.
If  the CM function $\psi(x)$ is the Laplace-Stieltes transform of a  known  distribution, what can be said about
 the   distribution  whose   Laplace-Stieltes transform is 
$\psi(\eta(x))$ for a Bernstein function $\eta(x)$? 
Rather than address the question for the very broad class of Bernstein functions  $\eta(x)$, we  consider the case
$\lambda x^\alpha$ for  $\lambda>0$ and $0<\alpha<1$.

\subsection{Gamma  Distribution}
 \label{sec:gamma}
 
 The   gamma distribution $G(\gamma,\lambda)$ with density  $\rho_{\gamma,\lambda}$
 with shape   and scale parameters $\gamma, \lambda>0$  is
\begin{align}
dG(t \vert \gamma,\lambda) 
&\equiv  \rho_{\gamma,\lambda}(t)  \, dt  
= \frac{\lambda^\gamma}{\Gamma(\gamma)}\,t^{\gamma-1} \,e^{-\lambda t} dt 
\label{eq:gamma} 
\end{align}
We shall make frequent reference to the unnormalised power density $\rho_\gamma(t) =  t^{\gamma-1}/\Gamma(\gamma)$.
The   Laplace-Stieltjes  transform of $G(\gamma,\lambda)$ (or the  Laplace transform of its density $\rho_{\gamma,\lambda}$)
 and the Laplace transform  of $\rho_\gamma(t)$  are respectively
\begin{alignat}{3}
\int_0^\infty e^{-xt}\, dG(t \vert \gamma,\lambda) 
\equiv \int_0^\infty e^{-xt}\,\rho_{\gamma,\lambda}(t) \, dt &= \frac{\lambda^\gamma}{(\lambda+x)^\gamma} &&\quad (x\ge0)
\label{eq:gammaLT}  \\
\int_0^\infty e^{-xt}\, \rho_\gamma(t)\, dt &=  x^{-\gamma} &&\quad (x>0)
\label{eq:powerLT}
\end{alignat}  
We also  let  $\rho_{\gamma=0}(t)\equiv\delta(t)$ with Laplace transform~1 ($\delta(t)$ is the Dirac delta function).

 
\subsection{Beta  Distribution}
 \label{sec:beta}
 
 The beta distribution ${\rm Beta}(\alpha,\beta)$, for given parameters $\alpha,\beta>0$, is defined on $(0,1)$ with density 
 \begin{align}
 {\rm beta}(t\vert \alpha,\beta) &= \frac{\Gamma(\alpha+\beta)}{\Gamma(\alpha)\Gamma(\beta)} \, t^{\alpha-1}(1-t)^{\beta-1} 
\label{eq:beta} \\
\int_0^1 t^k \, {\rm beta}(t\vert \alpha,\beta)\, dt  &= \frac{\Gamma(\alpha+\beta)\Gamma(\alpha+k)}{\Gamma(\alpha)\Gamma(\alpha+\beta+k)}
\quad ({\rm moments}
 \label{eq:betamoments})
\end{align} 

In anticipation of later discussion,  let  $U,V>0$ be independent  random variables whose  distributions  have densities $f_U(u), f_V(v)$ respectively.
Then it is a standard result that  the product $T=VU$ has density $f_T(t)$ given by 
\begin{align}
f_T(t) &= \int_0^\infty \int_0^\infty \delta(t-uv) f_U(u) f_V(v) \,du\, dv
      = \int_0^\infty f_V(t/u)  f_U(u) \frac{du}{u} 
\label{eq:UVdensity}
\end{align}
In particular, if $ f_U(u)\equiv {\rm beta}(u\vert \alpha,\beta)$ on $(0,1)$ and 0 on $u\ge1$, then 
\begin{align}
f_T(t)  &= \int_0^1 f_V(t/u) \, {\rm beta}(u\vert \alpha,\beta) \, \frac{du}{u}  \nonumber \\
 &=  \frac{\Gamma(\alpha+\beta)}{\Gamma(\alpha)\Gamma(\beta)} \int_0^1 f_V(t/u) \, u^{\alpha-1}(1-u)^{\beta-1}\, \frac{du}{u} 
 \label{eq:betaVdensity}
\end{align}

 \subsection{Stable Distribution}
\label{sec:stable}

The one-sided  stable distribution $F_\alpha(t\vert \lambda), t>0$ ($0<\alpha<1$), 
conditioned  on a  scale parameter $\lambda>0$, is indirectly defined by   its Laplace-Stieltjes transform 
$($equivalently, the ordinary Laplace transform of its density $f_\alpha(t\vert  \lambda)$)
\begin{align}
e^{-\lambda x^\alpha} &= \int_0^\infty e^{-xt}\,dF_\alpha(t\vert \lambda) =  \int_0^\infty e^{-x t}  f_\alpha(t\vert \lambda)\, dt
\label{eq:stable} 
\end{align}
$f_\alpha(t\vert \lambda)$ may also be written as $ f_\alpha(t \lambda^{-1/\alpha})\, \lambda^{-1/\alpha}$, 
where $f_\alpha(t)\equiv f_\alpha(t\vert \lambda=1)$.

$e^{-\lambda x^\alpha} $ is a CM function by construction.
In particular, minus its first derivative is 
\begin{align}
\lambda\alpha x^{\alpha-1} e^{-\lambda x^\alpha} &= 
\int_0^\infty e^{-x t}  tf_\alpha(t\vert \lambda)\, dt
= \lambda\alpha \int_0^\infty e^{-x t} \{\rho_{1-\alpha}\star f_\alpha(\cdot\vert\lambda)\}(t)\, dt
\label{eq:stableLTderiv}
\end{align}
where $\rho_{1-\alpha}(t)=t^{-\alpha}/\Gamma(1-\alpha)$ is the power density $\rho_\gamma(t)$ of Section~\ref{sec:gamma}  for $\gamma=1-\alpha$.
The second equality arises because   $x^{\alpha-1} e^{-\lambda x^\alpha}$ is the Laplace transform of  the convolution 
$\{\rho_{1-\alpha}\star f_\alpha(\cdot\vert\lambda\}(t)$.  
Uniqueness of Laplace transforms  implies the following known property of the stable density
\begin{proposition}
\label{prop:stable}
Let $f_\alpha(t\vert \lambda)$ $(0<\alpha<1, t>0)$ be the stable density with scale factor $\lambda>0$.
Then the product $tf_\alpha(t\vert \lambda)$  is proportional to the  convolution  $\{\rho_{1-\alpha}\star f_\alpha(\cdot\vert\lambda)\}(t)$ of 
the power density  $\rho_{1-\alpha}(t)=t^{-\alpha}/\Gamma(1-\alpha)$ and $ f_\alpha(t\vert \lambda)$
\begin{align}
tf_\alpha(t\vert \lambda) &\equiv \lambda\alpha \{\rho_{1-\alpha}\star f_\alpha(\cdot\vert\lambda)\}(t)
= \frac{\lambda\alpha}{\Gamma(1-\alpha)} \int_0^t (t-u)^{-\alpha} f_\alpha(u \vert\lambda) \, du
\label{eq:stableLTderivdensity} 
\end{align}
\end{proposition}
We will refer to  Proposition~\ref{prop:stable} in  discussion of the Mittag-Leffler distribution.

Proposition~\ref{prop:stable} may  be viewed as a particular instance of the representation of
infinitely divisible  distributions on the nonnegative half-line $\mathbb{R}_{+}=[0,\infty)$, a class that includes the stable distribution,
Feller~\cite[XIII.7]{Feller2}, Steutel and van Harn~\cite[III (Theorem~4.17)]{SteutelvanHarn}
(also see Sato~\cite{Sato}).  
There is also an intimate  link to the generalised gamma convolutions studied by Bondesson~\cite{Bondesson}.

We note that  $\{\rho_{1-\alpha}\star f_\alpha(\cdot\vert\lambda\}(t)$ may be regarded as the Riemann-Liouville fractional integral 
$\{I_+^{1-\alpha}\, f_\alpha(\cdot\vert\lambda\}(t)$ of the stable density $f_\alpha(\cdot\vert\lambda)$,
where $\{I_+^\nu\, f\}(t)$ for a function $f(t)$ ($t\ge0, \nu>0$) is 
the convolution $\{\rho_\nu\star f\}(t)$,  $\rho_{\nu}(t)=t^{\nu-1}/\Gamma(\nu)$
(Kiryakova~\cite{kiryakova1993generalized}, Miller and Ross~\cite{miller1993introduction}, Samko~{\it et al.}~{\cite{samko1993fractional}).
We  shall continue to use the general language of convolutions than that of fractional integrals.
We further note 
that Proposition~\ref{prop:stable} 
has  inspired the study of generalised stable densities $f_{\alpha,m}(t)$ 
defined by  $t^m f_{\alpha,m}(t) = \alpha\{\rho_{1-\alpha}\star  f_{\alpha,m}\}(t)$ for $m>0$
(Jedidi~{\it et al.}~\cite{Jedidi},  Pakes~\cite{Pakes}, Schneider~\cite{Schneider}).
We shall not have cause for such generalisation here.

The stable density $f_\alpha(t\vert \lambda)$  is only known in closed form for selected values of $\alpha$, the simplest  being for $\alpha=1/2$.
For the  general  rational  case $\alpha=l/k$ for positive integers $l,k$ with $k>l$, Penson and G\'{o}rska~\cite{PensonGorska} expressed
 $f_\alpha$ in terms of the Meijer $G$ function
 (also see James~\cite[8]{James_Lamperti}).

 Pollard~\cite{Pollard} (also see Feller~\cite[XVII.6]{Feller2}) proved that $f_\alpha(t\vert\lambda)$ 
has the   infinite series representation
 (with  modification here to include conditioning on the scale factor $\lambda>0$)
\begin{align}
  f_\alpha(t\vert \lambda)  &= \phantom{-} \frac{1}{\pi}\,{\rm Im}  \int_0^\infty e^{-tu} \, e^{-\lambda (e^{-i\pi}u)^\alpha}  \, du  
\label{eq:Pollard} \\
&= \phantom{-} \frac{1}{\pi}\,{\rm Im}
   \sum_{k=0}^\infty \frac{(-\lambda)^k}{k!} e^{-i\pi\alpha k}  \int_0^\infty e^{-tu} \, u^{\alpha k} \, du
\label{eq:Pollardinfsum1} \\
&= -\frac{1}{\pi}
   \sum_{k=0}^\infty \frac{(-\lambda)^k}{k!} \sin(\pi\alpha k) \, \frac{\Gamma(\alpha k+1)}{t^{\alpha k+1}}
\label{eq:Pollardinfsum2} 
\end{align}
We may equivalently sum from $k=1$ since the $k=0$ term is identically 0.

\section{Main Contribution}
\label{sec:main}

\begin{theorem}
\label{thm:CompositionLaplace}
Let  $\psi(x), x\ge0$ be the Laplace-Stieltjes transform of a   distribution  $F(t)$, $t>0$
\begin{align}
\psi(x) &= \int_0^\infty e^{-x t}\,dF(t) \quad x\ge0
\label{eq:LSF} 
\end{align}
Then, for $0<\alpha< 1$ and $\lambda>0$, the composition $\psi(\lambda x^\alpha)$ 
 is the  Laplace transform of a  mixture density $w_\alpha(t\vert \lambda)$ 
\begin{align}
\psi(\lambda x^\alpha) 
&=  \int_0^\infty e^{-x t} \, w_\alpha(t\vert \lambda)\, dt 
\label{eq:LTcomposition} \\
\textrm{where}\quad w_\alpha(t\vert \lambda) &=  \int_0^\infty   f_\alpha(t\vert \lambda u) \,dF(u)
\label{eq:w}
\end{align}
$f_\alpha(t\vert \lambda u)$ being the $\alpha$-stable density  conditioned on the scale factor $\lambda u$.
\end{theorem}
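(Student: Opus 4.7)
The plan is to substitute the definition of $\psi$ into $\psi(\lambda x^\alpha)$, recognize the resulting integrand as the Laplace transform of a stable density, and then interchange the order of integration. Concretely, I would start from
\begin{align*}
\psi(\lambda x^\alpha) &= \int_0^\infty e^{-\lambda x^\alpha u}\,dF(u),
\end{align*}
which is just the definition (\ref{eq:LSF}) applied at the argument $\lambda x^\alpha$. For each fixed $u\ge0$, I would then invoke the defining identity (\ref{eq:stable}) of the one-sided $\alpha$-stable density, noting that $e^{-\lambda x^\alpha u}=e^{-(\lambda u)x^\alpha}$, so
\begin{align*}
e^{-\lambda x^\alpha u} &= \int_0^\infty e^{-xt}\,f_\alpha(t\vert \lambda u)\,dt.
\end{align*}

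Next I would substitute this into the outer integral and exchange the order of integration in
\begin{align*}
\psi(\lambda x^\alpha) &= \int_0^\infty\!\int_0^\infty e^{-xt}\,f_\alpha(t\vert \lambda u)\,dt\,dF(u).
\end{align*}
Since the integrand is nonnegative and $F$ is a probability distribution, Tonelli's theorem applies without any further hypothesis, giving
\begin{align*}
\psi(\lambda x^\alpha) &= \int_0^\infty e^{-xt}\left(\int_0^\infty f_\alpha(t\vert \lambda u)\,dF(u)\right)dt,
\end{align*}
which is (\ref{eq:LTcomposition}) with $w_\alpha(t\vert\lambda)$ as in (\ref{eq:w}).

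Finally, to confirm that $w_\alpha(\cdot\vert \lambda)$ is a bona fide probability density, I would integrate in $t$ and again use Tonelli to swap the order, noting that $\int_0^\infty f_\alpha(t\vert\lambda u)\,dt=1$ for every $u>0$ by (\ref{eq:stable}) evaluated at $x=0$, so $\int_0^\infty w_\alpha(t\vert\lambda)\,dt=\int_0^\infty dF(u)=1$. There is really no hard step here; the only point needing care is the application of Tonelli, which is immediate from nonnegativity. The substantive content of the theorem is thus the probabilistic interpretation: $w_\alpha$ arises as the mixture obtained by sampling $u\sim F$ and then sampling $t$ from the $\alpha$-stable law with scale $\lambda u$, which sets up the mixture constructions used in the rest of the paper.
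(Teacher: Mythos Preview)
Your proof is correct and follows essentially the same route as the paper: substitute the stable Laplace transform identity for $e^{-(\lambda u)x^\alpha}$ into $\psi(\lambda x^\alpha)=\int_0^\infty e^{-\lambda x^\alpha u}\,dF(u)$ and swap the order of integration. Your explicit invocation of Tonelli and the check that $w_\alpha$ integrates to~$1$ are welcome additions that the paper leaves implicit.
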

\begin{proof}[Proof of Theorem  \ref{thm:CompositionLaplace}]
\label{proof:CompositionLaplace}
$e^{-\lambda u x^\alpha}$ ($0<\alpha< 1$) is 
the Laplace transform of the stable density $f_\alpha(\cdot\vert \lambda u)$. 
Hence
\begin{align*}
\psi(\lambda x^\alpha) = \int_0^\infty e^{-\lambda x^\alpha u}\,dF(u) 
&= \int_0^\infty  \int_0^\infty e^{-x t}  f_\alpha(t\vert \lambda u) dt \,dF(u)  \\
&= \int_0^\infty e^{-x t} \left[ \int_0^\infty   f_\alpha(t\vert \lambda u)\,dF(u) \right] \, dt \\
&= \int_0^\infty e^{-x t} \, w_\alpha(t\vert \lambda) \, dt
\end{align*}
where  $w_\alpha(t\vert \lambda)$ is the integral in square braces, thereby proving~(\ref{eq:w}).
\end{proof}

For clarity, we  may  explicitly condition on  $F$ or its density $f$: 
{\it e.g.}\ $w_\alpha(t\vert \lambda)\to w_\alpha(t\vert \lambda,F)\equiv w_\alpha(t\vert \lambda,f)$.
Equivalently, we may  condition on any parameters that are known  to characterise $F$ uniquely,
{\it e.g.}\  $w_\alpha(t\vert \lambda, \gamma,y)$ 
where $\gamma,y$ are understood to be the shape and scale parameters of the gamma distribution $G(\gamma,y)$.

Theorem~\ref{thm:CompositionLaplace} states a known property that can be couched in the language  of subordination, 
as described by Feller~\cite[XIII.7]{Feller2}.
Of more direct interest here  are  the associated corollaries below.

\begin{corollary}
\label{cor:CompositionLaplaceconv}
Let  $\rho_\nu(t)=t^{\nu-1}/\Gamma(\nu)$ $(\nu>0)$ 
 and let $f(t)$  be the density of $F(t)$. 
 Then the following holds
\begin{align}
\{\rho_{\nu} \star w_\alpha(\cdot\vert \lambda,F)\}(t) 
&\equiv \int_0^\infty \{\rho_{\nu} \star f_\alpha(\cdot\vert \lambda u)\}(t)\, dF(u) \nonumber \\
&=  \lambda^{\nu/\alpha} \, w_\alpha(t\vert \lambda, \rho_{\nu/\alpha} \star f)
\label{eq:convrhow} \\
{\rm where}\quad
w_\alpha(t\vert \lambda, \rho_{\nu/\alpha} \star f) &=  \int_0^\infty f_\alpha(t\vert \lambda u)\, \{\rho_{\nu/\alpha} \star f\}(u) du \nonumber
\end{align}
\end{corollary}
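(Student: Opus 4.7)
The plan is to verify the first equality by a Fubini interchange applied to the definition of $w_\alpha$, and to establish the second equality by computing the Laplace transforms of both sides and invoking their uniqueness.

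For the first identity I would substitute the mixture representation~(\ref{eq:w}) of $w_\alpha(s\vert\lambda,F)$ into $\int_0^t \rho_\nu(t-s)\, w_\alpha(s\vert\lambda,F)\, ds$ and swap the order of integration; nonnegativity of all integrands makes this immediate and yields the stated $\int_0^\infty \{\rho_\nu\star f_\alpha(\cdot\vert\lambda u)\}(t)\, dF(u)$.

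For the second identity I would compute the Laplace transform of each side. The left-hand side is a convolution, so by~(\ref{eq:powerLT}), Theorem~\ref{thm:CompositionLaplace}, and the convolution theorem its Laplace transform is $x^{-\nu}\psi(\lambda x^\alpha)$. On the right-hand side, the nonnegative weight $\{\rho_{\nu/\alpha}\star f\}$ has Laplace transform $x^{-\nu/\alpha}\psi(x)$; re-running the Fubini argument that underlies Theorem~\ref{thm:CompositionLaplace} with $\{\rho_{\nu/\alpha}\star f\}(u)\, du$ in place of $dF(u)$ then gives
\[
(\lambda x^\alpha)^{-\nu/\alpha}\psi(\lambda x^\alpha) = \lambda^{-\nu/\alpha} x^{-\nu}\psi(\lambda x^\alpha)
\]
as the Laplace transform of $w_\alpha(\cdot\vert\lambda,\rho_{\nu/\alpha}\star f)$. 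Multiplying by $\lambda^{\nu/\alpha}$ matches the left-hand side, and uniqueness of the Laplace transform for nonnegative integrands delivers the pointwise identity.

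The only delicate point I anticipate is the use of Theorem~\ref{thm:CompositionLaplace} with $\rho_{\nu/\alpha}\star f$ as the mixing weight, since this need not be a probability density. This is really a bookkeeping matter rather than a genuine obstacle: the proof of Theorem~\ref{thm:CompositionLaplace} is simply Fubini applied to nonnegative integrands, so it carries over verbatim to any nonnegative $\sigma$-finite mixing measure whose Laplace integral converges for $x>0$, which here is guaranteed by the finiteness of $x^{-\nu/\alpha}\psi(x)$.
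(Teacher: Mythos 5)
Your proposal is correct and follows essentially the same route as the paper: compute the Laplace transform of the convolution on the left as $x^{-\nu}\psi(\lambda x^\alpha)$, recognise $\lambda^{-\nu/\alpha}x^{-\nu}\psi(\lambda x^\alpha)=(\lambda x^\alpha)^{-\nu/\alpha}\psi(\lambda x^\alpha)$ as the transform of the mixture with weight $\rho_{\nu/\alpha}\star f$, and conclude by uniqueness of Laplace transforms. Your added remarks --- the explicit Fubini justification of the first ($\equiv$) identity and the observation that Theorem~\ref{thm:CompositionLaplace} extends to non-normalised nonnegative mixing weights --- are sound points of care that the paper leaves implicit.
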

 \begin{proof}[Proof of Corollary~\ref{cor:CompositionLaplaceconv}]
By the convolution  theorem, the Laplace  transform of the convolution  density $\{\rho_{\nu} \star w_\alpha(\cdot\vert \lambda,F)\}(t)$ is
the product $x^{-\nu}\psi(\lambda x^\alpha\vert F) =  \lambda^{\nu/\alpha} \left[(\lambda x^\alpha)^{-\nu/\alpha}\psi(\lambda x^\alpha\vert F)\right]$.
The product in square braces  is also  the  Laplace  transform of $w_\alpha(t\vert \lambda, \{\rho_{\nu/\alpha} \star f\})$.
Hence the validity of~(\ref{eq:convrhow}) by uniqueness of Laplace transforms.
 \end{proof}
 The point of Corollary~\ref{cor:CompositionLaplaceconv} is that it gives two different ways to express the
 convolution of the power density with the mixture density $w_\alpha(t\vert\lambda,F)$ --
 one involving  convolution  with the stable density and the other  involving  convolution with the density of $F$.
 This will have a bearing on our approach to the derivation of the Mittag-Leffler distribution below.

\begin{corollary}
\label{cor:CompositionLaplacePollard}
The mixture density $w_\alpha(t\vert \lambda,F)$ admits the representation 
\begin{align}
 w_\alpha(t\vert \lambda,F)  &= \frac{1}{\pi}\,{\rm Im}  \int_0^\infty e^{-tu} \,  \psi(\lambda e^{-i\pi\alpha}u^\alpha\vert F) \, du
 \label{eq:CompositionLaplacePollard}
 \intertext{In particular, for the convolution  $\rho_\nu\star f$ $(\nu>0)$}
 w_\alpha(t\vert \lambda, \rho_\nu\star f) 
  &= \frac{1}{\pi}\,{\rm Im} \int_0^\infty e^{-tu}\, (\lambda e^{-i\pi\alpha}u^\alpha)^{-\nu} \psi(\lambda e^{-i\pi\alpha}u^\alpha\vert f) \, du
 \label{eq:CompositionLaplacePollardConv}
\end{align}
\end{corollary}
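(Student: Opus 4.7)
The plan is to derive both representations by substituting Pollard's integral formula for the stable density into the mixture representation established in Theorem~\ref{thm:CompositionLaplace}, and then swapping the order of integration. Starting from $w_\alpha(t\vert\lambda,F)=\int_0^\infty f_\alpha(t\vert\lambda u)\,dF(u)$ and Pollard's formula (\ref{eq:Pollard}) applied with scale factor $\lambda u$, namely
\begin{align*}
f_\alpha(t\vert\lambda u)=\frac{1}{\pi}\,{\rm Im}\int_0^\infty e^{-tv}\,e^{-\lambda u\,e^{-i\pi\alpha}v^\alpha}\,dv,
\end{align*}
I would interchange the $dv$ and $dF(u)$ integrals and pull $e^{-tv}$ and the imaginary-part operator outside to obtain
\begin{align*}
w_\alpha(t\vert\lambda,F)=\frac{1}{\pi}\,{\rm Im}\int_0^\infty e^{-tv}\left[\int_0^\infty e^{-\lambda u\,e^{-i\pi\alpha}v^\alpha}\,dF(u)\right]dv.
\end{align*}
The bracketed integral is precisely $\psi$ evaluated at the complex argument $\lambda e^{-i\pi\alpha}v^\alpha$, i.e.\ $\psi(\lambda e^{-i\pi\alpha}v^\alpha\vert F)$, viewed as the natural analytic extension of the Laplace--Stieltjes transform from the positive real axis into the right half-plane (or a suitable sector). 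Relabelling $v$ as $u$ yields (\ref{eq:CompositionLaplacePollard}).

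For the convolution variant (\ref{eq:CompositionLaplacePollardConv}), I would simply specialise the first result to the distribution with density $\rho_\nu\star f$. Since $\rho_\nu$ has Laplace transform $x^{-\nu}$ by (\ref{eq:powerLT}), the convolution theorem gives $\psi(x\vert\rho_\nu\star f)=x^{-\nu}\psi(x\vert f)$, which extends analytically in $x$ under the same principal-branch convention used for Pollard's formula. Substituting the complex argument $x=\lambda e^{-i\pi\alpha}u^\alpha$ into this identity and inserting it into (\ref{eq:CompositionLaplacePollard}) produces the stated factor $(\lambda e^{-i\pi\alpha}u^\alpha)^{-\nu}\psi(\lambda e^{-i\pi\alpha}u^\alpha\vert f)$.

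The main obstacle is the rigorous justification of the interchange of integration and of the analytic continuation of $\psi$ to the ray $\arg z=-\pi\alpha$. For $0<\alpha<1/2$ the exponent $e^{-\lambda u\cos(\pi\alpha)v^\alpha}$ has a negative real part and Fubini applies directly after controlling the imaginary-part oscillation; for $1/2\le\alpha<1$ the real part flips sign and absolute convergence fails, so the exchange must instead be read as an oscillatory-integral identity (or equivalently justified via a contour-rotation argument of the type used by Pollard to derive (\ref{eq:Pollard})--(\ref{eq:Pollardinfsum2})). In the contexts of interest in this paper, where $F$ is a gamma or Mittag-Leffler distribution with transforms that admit explicit entire or sector-analytic extensions, this extension is unproblematic, and the termwise version of the argument mirroring (\ref{eq:Pollardinfsum1})--(\ref{eq:Pollardinfsum2}) would yield an alternative, series-based proof if preferred.
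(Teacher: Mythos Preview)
Your proposal is correct and follows essentially the same route as the paper: substitute Pollard's integral representation~(\ref{eq:Pollard}) for $f_\alpha(t\vert\lambda u)$ into the mixture formula, swap the order of integration to recognise the inner integral as $\psi(\lambda e^{-i\pi\alpha}v^\alpha\vert F)$, and then invoke the convolution theorem for the second identity. Your discussion of the Fubini/analytic-continuation subtlety is in fact more careful than the paper, which simply interchanges the integrals without comment.
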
 
\begin{proof}[Proof of Corollary~\ref{cor:CompositionLaplacePollard}]
Using the Pollard representation~(\ref{eq:Pollard}) for $f_\alpha(t\vert \lambda u)$ gives 
\begin{align*}
 w_\alpha(t\vert \lambda,F) &=  \int_0^\infty   f_\alpha(t\vert \lambda u) \,dF(u) \nonumber \\
 &= \frac{1}{\pi}\,{\rm Im}  \int_0^\infty \int_0^\infty e^{-tv} \, e^{-\lambda u (e^{-i\pi}v)^\alpha} \, dv \, dF(u) \\
 &= \frac{1}{\pi}\,{\rm Im}  \int_0^\infty e^{-tv}   \int_0^\infty \, e^{-\lambda u (e^{-i\pi}v)^\alpha} \, dF(u)\, dv \\
 &= \frac{1}{\pi}\,{\rm Im}  \int_0^\infty e^{-tv}   \psi(\lambda e^{-i\pi\alpha}v^\alpha\vert F) \, dv
\end{align*}
thereby proving~(\ref{eq:CompositionLaplacePollard}).  
The convolution theorem for $dF(u)=\{\rho_\nu\star f\}(u)du$ leads to (\ref{eq:CompositionLaplacePollardConv}).
\end{proof}

\begin{corollary}
\label{cor:CompositionLaplaceInfseries}
 Let $F(t)$ 
 have finite moments $\{M_k\}$ of all order
\begin{align}
  M_k &= \int_0^\infty  u^k \,dF(u)  \equiv (-1)^k \psi^{(k)}(0)   < \infty \qquad k\ge0
  \label{eq:momentsF}
\end{align}
Then the mixture density $w_\alpha(t\vert \lambda)$  has the infinite series representation 
\begin{align}
w_\alpha(t\vert \lambda)
 &= \int_0^\infty   f_\alpha(t\vert \lambda u) \, dF(u) \nonumber \\
 &= -\frac{1}{\pi} \sum_{k=0}^\infty \frac{(-\lambda)^k}{k!} \sin(\pi\alpha k) \,  M_k \int_0^\infty e^{-tv} \, v^{\alpha k} \, dv
\label{eq:wPollardinfsum1} \\
 &= -\frac{1}{\pi} \sum_{k=0}^\infty \frac{(-\lambda)^k}{k!} \sin(\pi\alpha k) \, M_k \frac{\Gamma(\alpha k+1)}{t^{\alpha k+1}} 
\label{eq:wPollardinfsum2} 
\end{align}
\end{corollary}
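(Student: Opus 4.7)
The plan is to combine two ingredients that are already in hand: the Pollard series~(\ref{eq:Pollardinfsum2}) for the stable density $f_\alpha(t \vert \lambda u)$ and the moment assumption on $F$. Starting from the definition $w_\alpha(t \vert \lambda) = \int_0^\infty f_\alpha(t \vert \lambda u) \, dF(u)$, I would substitute Pollard's series so that the integrand becomes
\begin{align*}
f_\alpha(t \vert \lambda u) &= -\frac{1}{\pi} \sum_{k=0}^\infty \frac{(-\lambda u)^k}{k!} \sin(\pi \alpha k) \frac{\Gamma(\alpha k + 1)}{t^{\alpha k + 1}}.
\end{align*}
The dependence on $u$ now factors cleanly as $u^k$, so interchanging the sum and the integral against $dF(u)$ produces exactly $M_k = \int_0^\infty u^k \, dF(u)$ in front of each term, yielding~(\ref{eq:wPollardinfsum2}). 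The first equality~(\ref{eq:wPollardinfsum1}) then follows by rewriting $\Gamma(\alpha k + 1)/t^{\alpha k + 1}$ as the Laplace integral $\int_0^\infty e^{-tv} v^{\alpha k} \, dv$.

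An equivalent and perhaps cleaner route goes through Corollary~\ref{cor:CompositionLaplacePollard}: express $w_\alpha(t \vert \lambda, F)$ as $\frac{1}{\pi} \mathrm{Im} \int_0^\infty e^{-tv} \psi(\lambda e^{-i\pi\alpha} v^\alpha \vert F) \, dv$, then expand $\psi$ in its Maclaurin series $\psi(z) = \sum_k (-z)^k M_k/k!$ (valid because the moments $M_k$ are finite and determine $\psi$ near the origin). Substituting $z = \lambda e^{-i\pi\alpha} v^\alpha$ and taking imaginary parts turns the phase factor $e^{-i\pi\alpha k}$ into $-\sin(\pi\alpha k)$, after which term-by-term integration in $v$ reproduces the same two expressions.

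The main obstacle is the justification of the termwise integration, not the formal computation. Pollard's series for $f_\alpha$ is conditionally convergent and its partial sums are not dominated by an integrable function uniformly in $u$, so naive Fubini does not apply. The cleanest way around this is to exploit the telescoping structure already implicit in Corollary~\ref{cor:CompositionLaplacePollard}: the integral representation~(\ref{eq:CompositionLaplacePollard}) of $w_\alpha$ is absolutely convergent in $v$ for fixed $t > 0$ because $|\psi(\lambda e^{-i\pi\alpha} v^\alpha \vert F)| \le \psi(0) = 1$, after which one only needs to justify the Maclaurin expansion of $\psi$ along the ray $\{\lambda e^{-i\pi\alpha} v^\alpha : v > 0\}$ and invoke dominated convergence on the partial sums (the tail bound $|\psi(z) - \sum_{k \le N} (-z)^k M_k/k!| \le |z|^{N+1} M_{N+1}/(N+1)!$ together with the exponential $e^{-tv}$ controls integrability for each $N$). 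This detour keeps the analysis inside the safely absolutely-convergent integrals and avoids having to impose auxiliary growth conditions on $\{M_k\}$ beyond finiteness.

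Finally, I would close by noting that the series~(\ref{eq:wPollardinfsum2}) may equivalently start at $k = 1$, since $\sin(0) = 0$ eliminates the $k = 0$ term, mirroring the corresponding remark for Pollard's original series.
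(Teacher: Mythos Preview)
Your primary route---substituting Pollard's series for $f_\alpha(t\vert\lambda u)$ into the mixture integral, factoring out $u^k$, and swapping sum with integral to produce $M_k$---is exactly what the paper does (it starts from~(\ref{eq:Pollardinfsum1}) rather than~(\ref{eq:Pollardinfsum2}), a cosmetic difference). The paper performs the interchange without comment, so your attention to justifying it via the absolutely convergent representation of Corollary~\ref{cor:CompositionLaplacePollard} is additional care beyond what the original provides.
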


\begin{proof}[Proof of Corollary~\ref{cor:CompositionLaplaceInfseries}]
Using the infinite series  representation~(\ref{eq:Pollardinfsum1}) of the stable density $f_\alpha(t\vert \lambda u)$ in~(\ref{eq:w}) gives
\begin{align*}
w_\alpha(t\vert \lambda)
  &= -\frac{1}{\pi} \int_0^\infty  \left[\sum_{k=0}^\infty \frac{(-\lambda u)^k}{k!} \sin(\pi\alpha k)\int_0^\infty e^{-tv} \, v^{\alpha k} \, dv \right]dF(u) \\
  &= -\frac{1}{\pi} \sum_{k=0}^\infty \frac{(-\lambda)^k}{k!} \sin(\pi\alpha k) \, \left[\int_0^\infty e^{-tv} \, v^{\alpha k} \, dv \right]
  \left[ \int_0^\infty u^k \, dF(u)\right]
\end{align*}
The rightmost term is $M_k$, thereby proving~(\ref{eq:wPollardinfsum1}) and hence (\ref{eq:wPollardinfsum2}).
 \end{proof}
 
We may now turn to a simple step function as our initial  choice of   $F$.


 \section{Unit Step   $F$}
 \label{sec:stepF}

 Let $F(t)$  be 0 on $0\le t<1$ with a jump of size 1 at $t=1$, {\it i.e.} $F(t)= \mathbbm{1}_{[1,\infty)}$,  
 where the indicator function  $\mathbbm{1}_{[c,\infty)}$ is 0 on $[0,c)$ and 1 on $[c,\infty)$.
The  density of $F(t)$  is $f(t)=\delta(t-1)$. 
The   Laplace-Stieltjes  transform of $F(t)$ (Laplace transform of $f(t)$) is 
\begin{align}
\psi(x) &= \int_0^\infty e^{-x t}\,dF(t)  
=  e^{-x} 
\implies \psi(\lambda x^\alpha)  = e^{-\lambda x^\alpha} \; (\lambda>0)
\label{eq:LSstepF}  
\end{align}
For $0<\alpha<1$, Theorem~\ref{thm:CompositionLaplace} states that $\psi(\lambda x^\alpha)$ 
is the Laplace transform of  a mixture density
\begin{align}
 \psi(\lambda x^\alpha)  &=  \int_0^\infty e^{-x t} \, w_\alpha(t\vert \lambda)\, dt = e^{-\lambda  x^\alpha} \quad 0<\alpha<1 
\label{eq:stepLTcomposition} \\ 
\textrm{where}\quad w_\alpha(t\vert \lambda) &=  \int_0^\infty   f_\alpha(t\vert \lambda u) \,dF(u) = f_\alpha(t\vert \lambda )
\label{eq:wstep}
\end{align}
For $\rho_\nu(t)=t^{\nu-1}/\Gamma(\nu)$ $(\nu>0)$, the convolution density $\{\rho_\nu\star f\}(t)$  has the  explicit form
 \begin{align}
 \{\rho_\nu\star f\}(t) &= \int_0^t \rho_\nu(t-u)\, dF(u) = \rho_\nu(t-1)\mathbbm{1}_{[1,\infty)}
 \label{eq:rhoconvdelta}
 \end{align}
 with Laplace transform  $x^{-\nu}\psi(x) = x^{-\nu}e^{-x}$.
\begin{theorem}
\label{thm:rhoconvstable}
For $0<\alpha<1$, $\rho_\nu(t)=t^{\nu-1}/\Gamma(\nu)$ $(\nu>0)$,  $\rho_0(t)=\delta(t)$ and $w_\alpha(t\vert \lambda)$ as defined in~$(\ref{eq:wstep})$,
the convolution density $\{\rho_\nu\star w_\alpha(\cdot\vert \lambda)\}(t)\equiv \{\rho_\nu\star f_\alpha(\cdot\vert \lambda) \}(t)$ $(\nu>0)$
 has the following  equivalent forms
\begin{align}
 \{\rho_\nu\star f_\alpha(\cdot\vert \lambda) \}(t) 
 &=  \lambda^{\nu/\alpha}  \int_0^\infty f_\alpha(t\vert \lambda u)\, \{\rho_{\nu/\alpha} \star f\}(u) du 
 \label{eq:rhoconvstable1} \\
 &=  \lambda^{\nu/\alpha}  \int_0^\infty f_\alpha(t\vert \lambda u) \rho_{\nu/\alpha}(u-1) \mathbbm{1}_{[1,\infty)} \, du  
 \label{eq:rhoconvstable2} \\
&= \frac{1}{\pi}\,{\rm Im} \; e^{i\pi\nu}\int_0^\infty e^{-tu} \,  u^{-\nu} \, e^{- \lambda (e^{-i\pi}u)^\alpha}  \, du 
 \label{eq:rhoconvstable3} \\
&=  \frac{1}{\pi} \sum_{k=0}^\infty \frac{(-\lambda )^k}{k!} \sin\pi(\nu-\alpha k) \frac{\Gamma(\alpha k-\nu+1)}{t^{\alpha k-\nu+1}}
 \label{eq:rhoconvstableinfsum}  \\
 &=  \sum_{k=0}^\infty \frac{(-\lambda)^k}{k!} \frac{t^{\nu-\alpha k-1}}{\Gamma(\nu-\alpha k)}   
 \equiv  \sum_{k=0}^\infty \frac{(-\lambda)^k}{k!} \rho_{\nu-\alpha k}(t)   
 \label{eq:rhoconvstableinfsum1} 
\end{align}
\end{theorem}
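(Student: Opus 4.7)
The plan is to obtain each of the five equalities in turn, mostly as direct specialisations of Corollaries~\ref{cor:CompositionLaplaceconv} and~\ref{cor:CompositionLaplacePollard} to the unit-step case $F=\mathbbm{1}_{[1,\infty)}$, where $f=\delta(\cdot-1)$, $\psi(x)=e^{-x}$, and $w_\alpha(t\vert\lambda)=f_\alpha(t\vert\lambda)$ by~(\ref{eq:wstep}).

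First, for~(\ref{eq:rhoconvstable1}) I would quote Corollary~\ref{cor:CompositionLaplaceconv} directly: the convolution $\{\rho_\nu\star f_\alpha(\cdot\vert\lambda)\}(t)$ equals $\lambda^{\nu/\alpha}\,w_\alpha(t\vert\lambda,\rho_{\nu/\alpha}\star f)$, which unpacks into exactly the stated integral. For~(\ref{eq:rhoconvstable2}) I would then substitute the explicit form $\{\rho_{\nu/\alpha}\star f\}(u)=\rho_{\nu/\alpha}(u-1)\mathbbm{1}_{[1,\infty)}$ provided by~(\ref{eq:rhoconvdelta}).

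Next, for~(\ref{eq:rhoconvstable3}) I would specialise the Pollard-type representation in Corollary~\ref{cor:CompositionLaplacePollard}, equation~(\ref{eq:CompositionLaplacePollardConv}), to $\psi(x)=e^{-x}$ with the index $\nu$ replaced by $\nu/\alpha$, and multiply by the prefactor $\lambda^{\nu/\alpha}$ from Corollary~\ref{cor:CompositionLaplaceconv}. The factor $(\lambda e^{-i\pi\alpha}u^\alpha)^{-\nu/\alpha}$ splits as $\lambda^{-\nu/\alpha}e^{i\pi\nu}u^{-\nu}$; the real constant $\lambda^{-\nu/\alpha}$ cancels $\lambda^{\nu/\alpha}$, while $e^{i\pi\nu}$ must remain inside $\mathrm{Im}$ (the remaining integral is not real), giving the stated form with $e^{-\lambda(e^{-i\pi}u)^\alpha}=e^{-\lambda e^{-i\pi\alpha}u^\alpha}$.

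For~(\ref{eq:rhoconvstableinfsum}) I would expand the inner exponential as
\begin{equation*}
e^{-\lambda(e^{-i\pi}u)^\alpha}=\sum_{k=0}^\infty\frac{(-\lambda)^k}{k!}\,e^{-i\pi\alpha k}\,u^{\alpha k},
\end{equation*}
interchange sum and integral, and evaluate $\int_0^\infty e^{-tu}u^{\alpha k-\nu}\,du=\Gamma(\alpha k-\nu+1)/t^{\alpha k-\nu+1}$; taking $\mathrm{Im}$ of the surviving phase $e^{i\pi(\nu-\alpha k)}$ produces $\sin\pi(\nu-\alpha k)$. Finally,~(\ref{eq:rhoconvstableinfsum1}) follows by applying the Euler reflection formula $\sin(\pi z)\,\Gamma(1-z)=\pi/\Gamma(z)$ with $z=\nu-\alpha k$, which collapses $\frac{1}{\pi}\sin\pi(\nu-\alpha k)\,\Gamma(\alpha k-\nu+1)$ to $1/\Gamma(\nu-\alpha k)$.

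The only genuine technical step is justifying the interchange of sum and integral in passing from~(\ref{eq:rhoconvstable3}) to~(\ref{eq:rhoconvstableinfsum}); this is the usual Pollard-series issue and can be handled by the same absolute-convergence / dominated-convergence argument that underpins~(\ref{eq:Pollardinfsum1})--(\ref{eq:Pollardinfsum2}), for $t>0$. Everything else is bookkeeping on gamma identities and complex phases. As a sanity check, the $k=0$ term of~(\ref{eq:rhoconvstableinfsum1}) returns $\rho_\nu(t)$, consistent with the $\lambda\to 0$ limit where $f_\alpha(\cdot\vert\lambda)\to\delta$.
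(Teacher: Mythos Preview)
Your proposal is correct and follows essentially the same route as the paper's proof: Corollary~\ref{cor:CompositionLaplaceconv} for~(\ref{eq:rhoconvstable1}), substitution of~(\ref{eq:rhoconvdelta}) for~(\ref{eq:rhoconvstable2}), Corollary~\ref{cor:CompositionLaplacePollard} for~(\ref{eq:rhoconvstable3}), termwise expansion for~(\ref{eq:rhoconvstableinfsum}), and Euler reflection for~(\ref{eq:rhoconvstableinfsum1}). The one point you gloss over that the paper makes explicit is that the termwise integral $\int_0^\infty e^{-tu}u^{\alpha k-\nu}\,du$ only converges when $\alpha k-\nu+1>0$, which fails for small $k$ unless $\nu<1$; the paper notes this restriction and then observes that after applying the reflection formula the resulting expression $1/\Gamma(\nu-\alpha k)$ is entire, so the final series~(\ref{eq:rhoconvstableinfsum1}) is valid for all $\nu\ge0$ by analytic continuation.
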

$\nu=0$  
reproduces the  representation~(\ref{eq:Pollardinfsum2}) of the stable density
$f_\alpha(t\vert\lambda) \equiv \{\rho_0\star f_\alpha(\cdot\vert \lambda\}(t)$.

\begin{remark}
$(\ref{eq:rhoconvstableinfsum1})$ may also be written as 
 $\{\rho_\nu\star f_\alpha(\cdot\vert \lambda) \}(t) = t^{\nu-1} \phi(-\alpha,\nu,-\lambda t^{-\alpha})$,
 where $\phi$ 
 is the Wright function that finds much application in fractional calculus.
 Its definition as an entire function  for $z\in \mathbb{C}$  is ({\it e.g.}\  Gorenflo {\it et al.}~\cite[F.2]{gorenflo2014mittag})
\begin{align}
\phi(\alpha,\beta,z)
 &=  \sum_{k=0}^\infty \frac{z^k}{k! \Gamma(\alpha  k+\beta)}   \qquad \alpha>-1, \beta \in \mathbb{C}
\label{eq:Wright} 
\end{align}
\end{remark}

\begin{proof}[Proof of Theorem~\ref{thm:rhoconvstable}]
Corollary~\ref{cor:CompositionLaplaceconv} gives~(\ref{eq:rhoconvstable1}).
Substituting~(\ref{eq:rhoconvdelta}) in~(\ref{eq:rhoconvstable1})  gives~(\ref{eq:rhoconvstable2}).
Given that the  Laplace transform of~(\ref{eq:rhoconvdelta})   is  $x^{-\nu}\psi(x) = x^{-\nu}e^{-x}$,
Corollary~\ref{cor:CompositionLaplacePollard} gives~(\ref{eq:rhoconvstable3}). Explicitly
 \begin{align*}
 \{\rho_\nu\star f_\alpha(\cdot\vert \lambda\}(t)  
 &=  \lambda^{\nu/\alpha}  \int_0^\infty f_\alpha(t\vert \lambda u) \rho_{\nu/\alpha}(u-1) \mathbbm{1}_{[1,\infty)} \, du  \\
 &= \frac{1}{\pi}\,{\rm Im}  \int_0^\infty e^{-tu} \, (e^{-i\pi\alpha}u^\alpha)^{-\nu/\alpha} \, e^{- \lambda (e^{-i\pi}u)^\alpha}  \, du  \\
 &=  \frac{1}{\pi}\,{\rm Im}  \sum_{k=0}^\infty \frac{(-\lambda )^k}{k!}  e^{i\pi(\nu-\alpha k)} \int_0^\infty e^{-tu} u^{\alpha k-\nu} \, du 
\intertext{
The   integral may be evaluated  for $\alpha k-\nu+1>0$, which only holds for all $k$ if $\nu<1$, to give}
 &=  \frac{1}{\pi}   \sum_{k=0}^\infty \frac{(-\lambda )^k}{k!}  \sin\pi(\nu-\alpha k) \frac{\Gamma(\alpha k-\nu+1)}{t^{\alpha k-\nu+1}} \\
 &=  \sum_{k=0}^\infty \frac{(-\lambda )^k}{k!} \frac{t^{\nu-\alpha k-1}}{\Gamma(\nu-\alpha k)}   
\end{align*}
The last step follows from the Euler reflection formula $\sin(\pi z)=\pi/\Gamma(z)\Gamma(1-z)$, where
$1/\Gamma(z)$ is an entire function for $z\in \mathbb{C}$ by   analytic continuation.
Hence  $1/\Gamma(\nu-\alpha k)$ is well-defined for $\nu\ge0$, without the   $\nu<1$ restriction  
imposed to evaluate the intermediate integral.
\end{proof}

This shows that the convolution  density  $\{\rho_\nu\star f_\alpha(\cdot\vert \lambda)\}(t)$ 
may  be replaced by   a mixture involving the simple convolution between $\rho_{\nu/\alpha}$ and a $\delta$ density.
In turn, the mixture  may optionally be represented as an infinite sum of (analytically continued) copies 
$\{\rho_{\nu-\alpha k}(t)\}$ of the power density $\rho_\nu(t)$.
$\{\rho_\nu\star f_\alpha(\cdot\vert \lambda)\}(t)$  is the basic building block for all other densities discussed in the balance of this paper.
In the case of $\nu=1-\alpha$, we have already  seen from Proposition~\ref{prop:stable} that the convolution has a simple form
 $\{\rho_{1-\alpha}\star f_\alpha(\cdot\vert \lambda)\}(t)=tf_\alpha(t\vert \lambda)/\lambda\alpha$.

In addition to the forms $(\ref{eq:rhoconvstable1})-(\ref{eq:rhoconvstableinfsum1})$, 
$\{\rho_\nu\star f_\alpha(\cdot\vert \lambda) \}(t)$ has an equivalent representation involving the beta distribution.
\begin{corollary}
\label{cor:rhoconvstable}
The convolution density $\{\rho_\nu\star f_\alpha(\cdot\vert \lambda) \}(t)$ can  be written as 
\begin{align}
 \{\rho_\nu\star f_\alpha(\cdot\vert \lambda) \}(t)  
 &=  \frac{\lambda^{\nu/\alpha}}{\Gamma(\nu/\alpha)}
  \int_0^1 f_\alpha(t\vert \lambda/u) \, u^{-\nu/\alpha}  (1-u)^{\nu/\alpha-1}  \, \frac{du}{u} 
  \label{eq:betastabledensity1} \\
 &\equiv \Gamma\left(1-\tfrac{\nu}{\alpha}\right)  \lambda^{\nu/\alpha}
  \int_0^1 f_\alpha(t\vert \lambda/u) \, {\rm beta}\left(u\vert 1-\tfrac{\nu}{\alpha}, \tfrac{\nu}{\alpha}\right)  \frac{du}{u}  
 \label{eq:betastabledensity2}
 \end{align}
\end{corollary}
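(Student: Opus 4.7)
The plan is to derive the corollary by a single change of variables in the mixture representation~(\ref{eq:rhoconvstable2}) already established in Theorem~\ref{thm:rhoconvstable}. Substituting $\rho_{\nu/\alpha}(u-1) = (u-1)^{\nu/\alpha-1}/\Gamma(\nu/\alpha)$ explicitly, that representation reads
\begin{align*}
\{\rho_\nu\star f_\alpha(\cdot\vert \lambda)\}(t) &= \frac{\lambda^{\nu/\alpha}}{\Gamma(\nu/\alpha)}\int_1^\infty f_\alpha(t\vert \lambda u)\,(u-1)^{\nu/\alpha-1}\,du,
\end{align*}
so only the support of the integrand needs to be shifted from $[1,\infty)$ to $(0,1]$.

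I would then apply the substitution $u = 1/v$, under which $du = -dv/v^2$, $u-1 = (1-v)/v$, and the stable scale factor becomes $\lambda u = \lambda/v$. Collecting powers of $v$, the differential $(u-1)^{\nu/\alpha-1}\,du$ becomes $(1-v)^{\nu/\alpha-1}\,v^{1-\nu/\alpha}\cdot v^{-2}\,dv = v^{-\nu/\alpha-1}(1-v)^{\nu/\alpha-1}\,dv$ once the sign from $du = -dv/v^2$ is absorbed into the reversal of integration limits. Rewriting $v^{-\nu/\alpha-1}\,dv$ as $v^{-\nu/\alpha}\,dv/v$ and renaming $v$ back to $u$ yields the first claimed form~(\ref{eq:betastabledensity1}) directly.

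To obtain the second form~(\ref{eq:betastabledensity2}), I would multiply and divide the first form by $\Gamma(1-\nu/\alpha)$ in order to recognize the beta density in~(\ref{eq:beta}) with shape parameters $a = 1-\nu/\alpha$ and $b = \nu/\alpha$: since the normalising factor $\Gamma(a+b) = \Gamma(1) = 1$, one has ${\rm beta}(u\vert 1-\nu/\alpha,\nu/\alpha) = u^{-\nu/\alpha}(1-u)^{\nu/\alpha-1}/[\Gamma(1-\nu/\alpha)\,\Gamma(\nu/\alpha)]$, which substitutes back into the first form to produce the second.

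I do not anticipate any real obstacle beyond careful exponent bookkeeping during the change of variables. I would note, however, that the beta-density reading in~(\ref{eq:betastabledensity2}) presupposes $0 < \nu/\alpha < 1$ for both shape parameters to be positive, whereas the integral form~(\ref{eq:betastabledensity1}) retains meaning throughout the range of $\nu$ and $\alpha$ for which the transformed integral converges, with the analytically continued series~(\ref{eq:rhoconvstableinfsum1}) in Theorem~\ref{thm:rhoconvstable} providing the natural extension outside that range.
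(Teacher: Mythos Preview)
Your proof is correct and follows essentially the same route as the paper: start from the representation~(\ref{eq:rhoconvstable2}) of Theorem~\ref{thm:rhoconvstable} and apply the substitution $u\mapsto 1/u$ to carry the integral over $[1,\infty)$ to one over $(0,1]$, then recognise the beta density. Your additional remark about the parameter range $0<\nu/\alpha<1$ needed for the beta-density interpretation~(\ref{eq:betastabledensity2}) is a useful observation not made explicit in the paper's proof.
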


This has the form of (\ref{eq:betaVdensity}), the density  of a product of two variables where one of them is beta distributed.
We shall pursue this fundamental   result further later.
It suffices to note for now that there are various ways to represent  the convolution $\{\rho_\nu\star f_\alpha(\cdot\vert \lambda) \}(t) $.

\begin{proof}[Proof of Corollary~\ref{cor:rhoconvstable}]
By (\ref{eq:rhoconvstable2})
\begin{align*}
\{\rho_\nu\star f_\alpha(\cdot\vert \lambda) \}(t) 
 &=  \lambda^{\nu/\alpha}  \int_1^\infty f_\alpha(t\vert \lambda u) \rho_{\nu/\alpha}(u-1) \, du  \\
 (u\to1/u)\quad
 &=  \lambda^{\nu/\alpha}  \int_0^1 f_\alpha(t\vert \lambda/u) \rho_{\nu/\alpha}(1/u-1) \, \frac{du}{u^2} \\
  &=  \frac{\lambda^{\nu/\alpha}}{\Gamma(\nu/\alpha)}
  \int_0^1 f_\alpha(t\vert \lambda/u) \, u^{-\nu/\alpha}  (1-u)^{\nu/\alpha-1}  \, \frac{du}{u} 
 \end{align*}
 thereby proving (\ref{eq:betastabledensity1}), hence (\ref{eq:betastabledensity2}).
\end{proof}
For  $\lambda=1$, (\ref{eq:betastabledensity1}) is equivalent  to Ho~{\it  et al.}~\cite[Proposition~2.2({\it iii})]{HoJamesLau},
derived by different reasoning (with the convolution $\rho_\nu\star f_\alpha$ expressed as  the 
Riemann-Liouville fractional integral  $I_+^\nu\, f_\alpha$).

We may now turn to other  choices of  $F$, starting  with the exponential distribution.
The results presented for this case are almost entirely well-known, the point here is the manner of their derivation using convolutions and mixtures 
and the insights arising therefrom.


 \section{Exponentially   Distributed   $F$}
 \label{sec:expF}
 
 Let $F\equiv G(1,y)$  with density $\rho_{1,y}$ be the exponential distribution,  a particular  case of   the gamma distribution defined in 
 Section~\ref{sec:gamma}, with shape parameter  $\gamma=1$ and scale parameter $y>0$. 
Explicitly, $dF(t\vert y) \equiv \rho_{1,y}(t)dt =y e^{-yt}dt$ with Laplace-Stieltjes transform 
\begin{align}
\psi( x \vert y)  &= \int_0^\infty  e^{-x t} \, dF(t\vert y)=  \frac{y}{y+ x} \quad
\implies \psi( x^\alpha\vert y)  
=  \frac{y}{y+ x^\alpha} 
\label{eq:expLTcomposition} 
\end{align}
For $0<\alpha<1$, 
Theorem~\ref{thm:CompositionLaplace} states that $\psi( x^\alpha\vert y)$ 
is the Laplace transform of  a mixture density} 
\begin{align}
\psi( x^\alpha\vert y)  &= \int_0^\infty e^{-x t} \, w_\alpha(t\vert y)\, dt \\
\textrm{where}\quad w_\alpha(t\vert y) 
&=  \int_0^\infty   f_\alpha(t\vert  u) \,dF(u\vert y)  
\label{eq:wexp}
 \end{align}
 
\begin{theorem}
\label{thm:MLfunction}
Let $0<\alpha<1$, $w_\alpha(t\vert y)$  given by~$(\ref{eq:wexp})$
and $\rho_{1-\alpha}(t)=t^{-\alpha}/\Gamma(1-\alpha)$.
Then the  Mittag-Leffler function  $E_\alpha(-y t^\alpha)$  has the  convolution representation
\begin{align}
y E_\alpha(-y t^\alpha) &=   \{\rho_{1-\alpha}\star w_\alpha(\cdot\vert y)\}(t) \quad t>0, y>0
\label{eq:MLconv} \\
&\equiv  \int_0^\infty   \{\rho_{1-\alpha}\star  f_\alpha(\cdot \vert  u)\}(t) \,dF(u\vert y)  
\label{eq:MLconv1} \\
&=  \int_0^\infty  f_\alpha(t\vert  u)  \{\rho_{1/\alpha-1} \star \rho_{1,y}\}(u) \, du 
\label{eq:MLconv2}
\end{align}
where $dF(u\vert y)\equiv \rho_{1,y}(u) du = y e^{-yu}du$.
\end{theorem}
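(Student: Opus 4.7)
The plan is to prove~(\ref{eq:MLconv}) via uniqueness of Laplace transforms and then deduce~(\ref{eq:MLconv1}) and~(\ref{eq:MLconv2}) from it by elementary rearrangements. The key observation is that both sides of~(\ref{eq:MLconv}) already have familiar Laplace transforms in the machinery built up so far.

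First I would compute the Laplace transform of the right-hand side of~(\ref{eq:MLconv}). By Theorem~\ref{thm:CompositionLaplace} specialised to the exponential $F\equiv G(1,y)$ with $\lambda=1$,
\[
\int_0^\infty e^{-st}\,w_\alpha(t\vert y)\,dt \;=\; \psi(s^\alpha\vert y) \;=\; \frac{y}{y+s^\alpha},
\]
and $\rho_{1-\alpha}$ has Laplace transform $s^{\alpha-1}$ by~(\ref{eq:powerLT}). The convolution theorem then yields
\[
\int_0^\infty e^{-st}\,\{\rho_{1-\alpha}\star w_\alpha(\cdot\vert y)\}(t)\,dt \;=\; \frac{y\,s^{\alpha-1}}{y+s^\alpha}.
\]
On the other hand, setting $\beta=\gamma=1$ and $\lambda=y$ in~(\ref{eq:LaplaceML3par}) gives exactly
\[
\int_0^\infty e^{-st}\, y E_\alpha(-yt^\alpha)\,dt \;=\; \frac{y\,s^{\alpha-1}}{y+s^\alpha}.
\]
Matching the two transforms and invoking uniqueness (both integrands are continuous and integrable for $s$ large enough) establishes~(\ref{eq:MLconv}).

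To obtain~(\ref{eq:MLconv1}), I would substitute the defining mixture representation of $w_\alpha(\cdot\vert y)$ into the outer convolution and use Fubini to interchange the order of integration, pushing the convolution with $\rho_{1-\alpha}$ under the mixing integral with respect to $dF(u\vert y)$. For~(\ref{eq:MLconv2}), I would invoke Corollary~\ref{cor:CompositionLaplaceconv} with the choices $\lambda=1$, $\nu=1-\alpha$, and $f=\rho_{1,y}$: since $\nu/\alpha=1/\alpha-1$ and $\lambda^{\nu/\alpha}=1$, the corollary delivers
\[
\{\rho_{1-\alpha}\star w_\alpha(\cdot\vert 1,F)\}(t) \;=\; \int_0^\infty f_\alpha(t\vert u)\,\{\rho_{1/\alpha-1}\star\rho_{1,y}\}(u)\,du,
\]
which is precisely the asserted form.

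There is no substantive analytic obstacle here: once~(\ref{eq:MLconv}) is verified by Laplace inversion, the remaining two representations are pure rearrangements of results already proved. The one point that demands vigilance is the parameter bookkeeping in Corollary~\ref{cor:CompositionLaplaceconv}, specifically the identification $\nu/\alpha=1/\alpha-1$ and the fact that the $\lambda^{\nu/\alpha}$ prefactor collapses to $1$ when $\lambda=1$, which together produce the precise inner convolution $\rho_{1/\alpha-1}\star\rho_{1,y}$ appearing in~(\ref{eq:MLconv2}) rather than some rescaled variant.
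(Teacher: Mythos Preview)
Your proposal is correct and follows essentially the same route as the paper: establish~(\ref{eq:MLconv}) by matching Laplace transforms (convolution theorem on the right, the $\beta=\gamma=1$ case of~(\ref{eq:LaplaceML3par}) on the left) and invoking uniqueness, then read off~(\ref{eq:MLconv1}) from the definition~(\ref{eq:wexp}) of $w_\alpha$ and obtain~(\ref{eq:MLconv2}) from Corollary~\ref{cor:CompositionLaplaceconv} with $\lambda=1$, $\nu=1-\alpha$. Your explicit bookkeeping of the parameters in that corollary is a welcome addition.
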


\begin{proof}[Proof of Theorem  \ref{thm:MLfunction}]
The convolution $\{\rho_{1-\alpha}\star w_\alpha(\cdot \vert y)\}(t)$ of $\rho_{1-\alpha}(t)$  (Laplace  transform  $x^{\alpha-1}$) 
and  $w_\alpha(t\vert y)$ gives the following
\begin{align*}
x^{\alpha-1} \psi( x^\alpha\vert y) 
=  y \, \frac{x^{\alpha-1}}{y+ x^\alpha} 
&= y \int_0^\infty e^{-x t} \, E_\alpha(-y t^\alpha) \, dt  \\
&= \int_0^\infty e^{-x t} \, \{\rho_{1-\alpha}\star w_\alpha(\cdot \vert y)\}(t) \, dt  
\end{align*}
where  the Laplace transform of the Mittag-Leffler function $E_\alpha(-y t^\alpha)$ is read from~(\ref{eq:LaplaceML3par}).
Hence $y E_\alpha(-y t^\alpha) =   \{\rho_{1-\alpha}\star w_\alpha(\cdot\vert y)\}(t)$, which is identical to~(\ref{eq:MLconv1}) by 
the definition~(\ref{eq:wexp}) of  $w_\alpha(t\vert y)$.
The equality of (\ref{eq:MLconv1})  and (\ref{eq:MLconv2}) follows from 
Corollary~\ref{cor:CompositionLaplaceconv} of Theorem~\ref{thm:CompositionLaplace}.
\end{proof}

While  (\ref{eq:MLconv1})  and (\ref{eq:MLconv2}) are formally equivalent, 
they lead to  different outcomes.
We interpret $E_\alpha(-y t^\alpha)$ as a function of two variables $t$ and $y$.  
Corollary~\ref{cor:MLconv1} treats~ (\ref{eq:MLconv1})  as a function of $y$ given $t$.
This leads to the  Mittag-Leffler distribution, expressed as a convolution.
Corollary~\ref{cor:MLconv1Composition} gives a  consequence  thereof.
Corollary~\ref{cor:MLconv2}, on the other hand, treats~(\ref{eq:MLconv2}) as a function of $t$ given $y$,
leading directly to the result of Corollary~\ref{cor:MLconv1Composition}, but without the ability to reproduce the underlying results of 
Corollary~\ref{cor:MLconv1} concerning the Mittag-Leffler distribution.


\subsection{Mittag-Leffler distribution}
\label{sec:ML}

\begin{corollary}[Mittag-Leffler distribution as a convolution]
\label{cor:MLconv1}
For $0<\alpha<1$,   $E_\alpha(-yt^\alpha)$ as a function of $y$  given $t>0$  is the Laplace-Stieltjes transform of a   distribution 
$P_\alpha(u\vert t)$
\begin{align}
E_\alpha(-y t^\alpha) &=    \int_0^\infty    e^{-yu} \,  dP_\alpha(u\vert t) \quad (y\ge0)
\label{eq:MLconv1LSt} \\
{\rm where}\quad 
 dP_\alpha(u\vert t) &= \{\rho_{1-\alpha}\star  f_\alpha(\cdot \vert  u)\}(t) \, du  \\
 &=\frac{t}{\alpha}  f_\alpha(t \vert  u)  \,  u^{-1}  \, du 
  \equiv    \frac{t}{\alpha}  f_\alpha(t u^{-1/\alpha}) \,  u^{-1/\alpha-1} \, du 
 \label{eq:MLconv1distribution} 
\intertext{For $t=1$, $P_\alpha(u)\equiv P_\alpha(u\vert 1)$ with density $p_\alpha(u)\equiv p_\alpha(u\vert 1)$ is known as  the  
Mittag-Leffler distribution with Laplace-Stieltjes transform $E_\alpha(-y)$}
E_\alpha(-y) &=    \int_0^\infty    e^{-yu} \,  dP_\alpha(u)  = \int_0^\infty    e^{-yu} \,  p_\alpha(u) \, du
\label{eq:LStMLdistribution} \\
{\rm where}\quad 
 p_\alpha(u) &= \{\rho_{1-\alpha}\star  f_\alpha(\cdot \vert  u)\}(1) 
 \label{eq:MLdistribution} \\
 &=\frac{1}{\alpha}  f_\alpha(1 \vert  u)  \,  u^{-1}   \equiv    \frac{1}{\alpha}  f_\alpha( u^{-1/\alpha}) \,  u^{-1/\alpha-1} \, 
 \label{eq:MLdistribution1} \\
 &=  \frac{1}{\pi\alpha u}\,{\rm Im}  \int_0^\infty e^{-v} \, e^{-u (e^{-i\pi}v)^\alpha}  \, dv  
\label{eq:MLdistributionPollard} \\
 &= \frac{1}{\pi\alpha}  \sum_{k=1}^\infty \frac{(-u)^{k-1}}{k!} \sin(\pi\alpha k) \, \Gamma(\alpha k+1) 
\label{eq:MLdistributionPollardinfsum} 
\end{align}
The moments of $P_\alpha$ are
\begin{align}
M_{\alpha;k} &=  \int_0^\infty u^k \, dP_\alpha(u) \equiv (-1)^k E^{(k)}_\alpha(0) = \frac{k!}{\Gamma(\alpha k+1)} \quad k\ge0
\label{eq:MLmoments}
\end{align}
\end{corollary}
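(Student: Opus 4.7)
The plan is to read Theorem~\ref{thm:MLfunction} as a Laplace transform in the variable $y$ rather than $t$, then use Property~\ref{prop:stable} to collapse the inner convolution, and finally specialise to $t=1$ and apply the Pollard representation.

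First, starting from equation~(\ref{eq:MLconv1}) and substituting the explicit form $dF(u\vert y)=ye^{-yu}\,du$, I would write
\begin{align*}
y E_\alpha(-y t^\alpha) &= \int_0^\infty \{\rho_{1-\alpha}\star f_\alpha(\cdot\vert u)\}(t)\, y e^{-yu}\, du,
\end{align*}
and then cancel the common factor $y$ (valid for $y>0$, with the $y=0$ case obtained by continuity since $E_\alpha(0)=1$). This immediately exhibits $E_\alpha(-yt^\alpha)$, as a function of $y$, as the Laplace transform of the function $u\mapsto \{\rho_{1-\alpha}\star f_\alpha(\cdot\vert u)\}(t)$ with $t$ held fixed. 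Uniqueness of Laplace transforms then identifies this function as the density of a measure $dP_\alpha(u\vert t)$. That $P_\alpha(\cdot\vert t)$ is a probability distribution follows by evaluating the Laplace--Stieltjes transform at $y=0$ and using $E_\alpha(0)=1$, which also forces nonnegativity of the integrand via the positivity of the stable density and $\rho_{1-\alpha}$.

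Second, to rewrite the density in the stated form~(\ref{eq:MLconv1distribution}), I would invoke Property~\ref{prop:stable} with scale factor $u$, which says
\begin{align*}
\{\rho_{1-\alpha}\star f_\alpha(\cdot\vert u)\}(t) &= \frac{t f_\alpha(t\vert u)}{u\alpha},
\end{align*}
and then use the scaling identity $f_\alpha(t\vert u)=f_\alpha(tu^{-1/\alpha})u^{-1/\alpha}$ from Section~\ref{sec:stable}. Setting $t=1$ specialises this to $P_\alpha$ and gives formulas~(\ref{eq:MLdistribution})--(\ref{eq:MLdistribution1}) directly.

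Third, for the Pollard representation~(\ref{eq:MLdistributionPollard}), I would substitute $\lambda=u$, $t=1$ into Pollard's integral representation~(\ref{eq:Pollard}) of $f_\alpha(1\vert u)$, then divide by $\alpha u$ as dictated by~(\ref{eq:MLdistribution1}). Expanding the exponential $e^{-u(e^{-i\pi}v)^\alpha}$ as a power series in $u$ and integrating term by term (the $k=0$ contribution is real and drops out of the imaginary part) yields the series~(\ref{eq:MLdistributionPollardinfsum}); this is exactly the computation that gave~(\ref{eq:Pollardinfsum2}), transplanted to $p_\alpha$. Finally, the moments~(\ref{eq:MLmoments}) follow from differentiating the Laplace--Stieltjes transform $E_\alpha(-y)=\sum_{k\ge 0}(-y)^k/\Gamma(\alpha k+1)$ term by term at $y=0$, using $M_{\alpha;k}=(-1)^k E_\alpha^{(k)}(0)$.

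I do not anticipate any genuine obstacle: the only delicate step is swapping the order of integration to isolate the Laplace kernel in $y$, which is justified by Tonelli's theorem since the integrand is nonnegative. All remaining manipulations are algebraic identities and specialisations of previously established formulas.
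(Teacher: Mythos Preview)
Your proposal is correct and follows essentially the same route as the paper: substitute the exponential density into~(\ref{eq:MLconv1}), cancel the $y$ to read off $dP_\alpha(u\vert t)$, invoke Property~\ref{prop:stable} for the simplified form, specialise to $t=1$, and then apply~(\ref{eq:Pollard})--(\ref{eq:Pollardinfsum2}) for the Pollard representations and the series expansion of $E_\alpha$ for the moments. Your remarks on Tonelli and on the $y=0$ case by continuity are sensible additions, but the argument is otherwise identical to the paper's.
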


\begin{proof}[Proof of Corollary   \ref{cor:MLconv1}]
Given that  $dF(u\vert y) = y e^{-yu}du$, $(\ref{eq:MLconv1})$ gives
\begin{align*}
E_\alpha(-y t^\alpha) &=    \int_0^\infty    e^{-yu} \, \{\rho_{1-\alpha}\star  f_\alpha(\cdot \vert  u)\}(t) \, du  
\equiv  \int_0^\infty    e^{-yu} \,  dP_\alpha(u\vert t) \quad (t>0)
\intertext{where $dP_\alpha(u\vert t) = \{\rho_{1-\alpha}\star  f_\alpha(\cdot \vert  u)\}(t)  du$.
By Proposition~\ref{prop:stable},  $\alpha u \, \{\rho_{1-\alpha}\star f_\alpha(\cdot\vert u)\}(t) = tf_\alpha(t\vert u)$.
Hence}
 dP_\alpha(u\vert t) &= \{\rho_{1-\alpha}\star  f_\alpha(\cdot \vert  u)\}(t) \, du =\frac{t}{\alpha}  f_\alpha(t \vert  u)  \,  u^{-1}  \, du 
  \equiv    \frac{t}{\alpha}  f_\alpha(t u^{-1/\alpha}) \,  u^{-1/\alpha-1} \, du 
\end{align*}
The Mittag-Leffler distribution is $P_\alpha(u)\equiv P_\alpha(u\vert 1)$. 
The forms of the density given by~(\ref{eq:MLdistributionPollard}) and~(\ref{eq:MLdistributionPollardinfsum}) follow
from~(\ref{eq:Pollard}) and~(\ref{eq:Pollardinfsum2}) respectively. 
The moments of $P_\alpha(u)$   follow easily.
\end{proof}
As discussed in Section~\ref{sec:MLdistributions},
Pollard~\cite{PollardML}  used  complex analysis  to derive  $P_\alpha$. 
In a probabilistic  derivation,
Feller~\cite[XIII.8]{Feller2} used the two-dimensional Laplace  transform of the 
one-sided  $\alpha$-stable distribution with an associated  scale factor. 
Our approach here is inspired by Feller's approach, with convolution as a running theme throughout the paper.
Hence the density  takes a convolution  form equivalent to the familiar  form given by~(\ref{eq:MLdistribution1}).

\begin{corollary}[Composition]
\label{cor:MLconv1Composition}
For $0<\alpha<1$ and  $0<\sigma<1$, the composition $E_\alpha(-\lambda x^\sigma)$ as a function of $x$ given $\lambda>0$  
is the Laplace transform of a mixture  density
$w_{\sigma,\alpha}(t\vert \lambda)$
\begin{align}
E_\alpha(-\lambda x^\sigma) &= \int_0^\infty  e^{-x t} \,  w_{\sigma,\alpha}(t\vert\lambda) \,  dt \quad (x\ge0) 
\label{eq:LStMLComposition} \\
\textrm{where}\quad w_{\sigma,\alpha}(t\vert \lambda)&=  \int_0^\infty   f_\sigma(t\vert  \lambda u) \,dP_\alpha(u) 
\label{eq:wML} \\
&= \phantom{-} \frac{1}{\pi} {\rm Im} \int_0^\infty e^{-tu} \, E_\alpha(-\lambda e^{-i\pi\sigma}u^\sigma) \, du  
\label{eq:wMLPollard} \\
 &= -\frac{1}{\pi} \sum_{k=0}^\infty (-\lambda)^k\sin(\pi\sigma k) \, \frac{\Gamma(\sigma k+1)}{\Gamma(\alpha k+1)} \, t^{-\sigma k-1} 
\label{eq:wMLPollardseries} 
\intertext{For $\sigma=\alpha$, $E_\alpha(-\lambda x^\alpha)$ is the Laplace transform of  the mixture  density $w_{\alpha,\alpha}(t\vert\lambda)$}
w_{\alpha,\alpha}(t\vert\lambda)&=  \int_0^\infty   f_\alpha(t\vert  \lambda u) \,dP_\alpha(u)  
\label{eq:wMLalpha} \\
 &=  \frac{\sin\pi\alpha}{\pi}\, \frac{\lambda\, t^{\alpha-1}} {\lambda^2+2\lambda \, t^\alpha \cos\pi\alpha+t^{2\alpha}} 
\label{eq:wMLalphaPollard} 
 \end{align} 
\end{corollary}
\begin{proof}[Proof of Corollary  \ref{cor:MLconv1Composition}]
By Theorem~\ref{thm:CompositionLaplace} with $F(t)\equiv P_\alpha(t)$ and  $\psi(x)\equiv E_\alpha(-x)$,  
$\psi(\lambda x^\sigma) \equiv E_\alpha(-\lambda x^\sigma)$ is  the Laplace  transform of a mixture  density given by~$(\ref{eq:wML})$.
The equivalent forms~$(\ref{eq:wMLPollard})$ and $(\ref{eq:wMLPollardseries})$ follow from Corollaries~\ref{cor:CompositionLaplacePollard} and
\ref{cor:CompositionLaplaceInfseries} respectively.
For $\sigma=\alpha$,~(\ref{eq:wML}) becomes
\begin{alignat*}{3}
w_{\alpha,\alpha}(t\vert\lambda) &=  \int_0^\infty   f_\alpha(t\vert  \lambda u) \,dP_\alpha(u) \\
 &= \phantom{-} \frac{1}{\pi} {\rm Im} \int_0^\infty e^{-tu} \, E_\alpha(-\lambda e^{-i\pi\alpha}u^\alpha) \, du  
 &&=  \frac{1}{\pi} \, {\rm Im}\, \frac{t^{\alpha-1}}{\lambda e^{-i\pi\alpha}+t^\alpha} \\
\textrm{or}\;  &= -\frac{1}{\pi} \sum_{k=0}^\infty (-\lambda)^k\sin(\pi\alpha k) \,  t^{-\alpha k-1}
 &&=   \frac{1}{\pi t}\, {\rm Im}\, \frac{1}{1+ \lambda e^{-i\pi\alpha}t^{-\alpha}}  \\
 &=  \frac{\sin\pi\alpha}{\pi}\, \frac{\lambda\, t^{\alpha-1}} {\lambda^2+2\lambda \, t^\alpha \cos\pi\alpha+t^{2\alpha}} 
 \end{alignat*} 
with the Laplace  transform  and the  geometric  sum  leading to the same  result.
\end{proof}
The fundamental role of the Mittag-Leffler distribution $P_\alpha$ is thus evident.
$E_\alpha(-x)$ and the compositions   $E_\alpha(-\lambda x^\sigma),  E_\alpha(-\lambda x^\alpha)$ arise from  
$P_\alpha$  and mixtures of $f_\sigma, f_\alpha$  with respect to  $P_\alpha$.

\begin{remark}
\label{rem:rvratios}
The $\sigma=\alpha$   density $w_{\alpha,\alpha}(t\vert\lambda)$ of~(\ref{eq:wMLalphaPollard}) is  well-known in the probabilistic  literature.
It  has an intimate association with  occupation times in Brownian motion (Bertoin~{\it et al.}~\cite{Bertoin}, Lamperti~\cite{Lamperti})).
For $\lambda =1$,  (\ref{eq:wMLalphaPollard}) arises   as the density   of the ratio 
$X/Y$ of  independent identically distributed ({\it iid}) $\alpha$-stable random variables $X,Y$  
({\it e.g.}\  James~\cite{James_Lamperti}, James  {\it et al.}~\cite{JamesRoynetteYor}).
In Appendix~\ref{sec:MLrv}, we demonstrate that a slight variant on $X/Y$ (with $\lambda>0$) 
has a  density that takes the  mixture integral form of~(\ref{eq:wMLalpha}),
which reduces to the more commonly cited second form~(\ref{eq:wMLalphaPollard}) 
under the Pollard representation~(\ref{eq:Pollardinfsum2}) of the stable density $f_\alpha(t \vert  \lambda u)$.
 We proceed to show that the  density of   $T=(X/Y)^\alpha$ for   {\it iid} $\alpha$-stable random variables $X,Y$  is
\begin{align}
\frac{1}{\alpha t} w_{\alpha,\alpha}(1\vert t) 
&=  \frac{1}{\alpha t}  \int_0^\infty   f_\alpha(1 \vert  t u) \,dP_\alpha(u) 
 \label{eq:ratiopowerdensity} \\
 &=  \frac{\sin\pi\alpha}{\pi\alpha}\, \frac{1} {t^2+2t  \cos\pi\alpha+1}  \qquad (t\ge0)
 \label{eq:ChamYor}
 \end{align} 
The form (\ref{eq:ChamYor})  reproduces  Chaumont and Yor~\cite[p116(4.21.3)]{ChaumontYor} via the mixture density 
$w_{\alpha,\alpha}(1\vert t)$ and  the  representation~(\ref{eq:Pollardinfsum2}) of the stable density
$f_\alpha(1 \vert  t u)$.
\end{remark}

\begin{remark}
$w_{\alpha,\alpha}(t\vert\lambda)$  is also the density of the Thorin measure 
(in the language of infinitely divisible distributions known as generalised gamma convolutions)
of the distribution of the product $X Y^{1/\alpha}$ where $\{X,Y\}$ are respectively the $\alpha$-stable   and
gamma random variables (Bondesson~\cite[3.2.4 (p38)]{Bondesson}).
Although Bondesson did not use the term, the distribution in question is the generalised positive  Linnik distribution,
as named by  Pakes~\cite{Pakes95}.
For completenesses, we discuss next  the case where $Y$ is exponentially distributed,
deferring the more general gamma distribution  case to later discussion.
\end{remark}

\subsection{Positive  Linnik Distribution}
\label{sec:ML}

As noted in Section~\ref{sec:otherMLdistributions}, 
$\lambda^\gamma/(\lambda+x^\alpha)^\gamma$, ($0<\alpha<1, \gamma>0,\lambda>0)$ 
is the Laplace-Stieltjes  transform of  the generalised  positive  Linnik distribution ${\rm PL}(\alpha,\gamma,\lambda)$.  
As further noted,  $\gamma=1$ defines the  positive  Linnik distribution ${\rm PL}(\alpha,\lambda)\equiv {\rm PL}(\alpha,1,\lambda)$ with explicit form
${\rm PL}(t\vert \alpha,\lambda)=1-E_\alpha(-\lambda t^\alpha)$.
It is also well-known that the corresponding  density may be written  
as the mixture density $w_\alpha(t\vert\lambda)$  of~(\ref{eq:wexp}), 
whose  Laplace transform is  $\lambda/(\lambda+x^\alpha)$.
There is yet another representation of this density in terms of  the  mixture density $w_{\alpha,\alpha}(t\vert\lambda)$  of~(\ref{eq:wMLalpha})
in Corollary~\ref{cor:MLconv1Composition}:

\begin{corollary}[Positive Linnik distribution]
\label{cor:PositiveLinnik}
Let ${\rm PL}(\alpha,\lambda)\equiv {\rm PL}(\alpha,1,\lambda)$ 
$(0<\alpha<1$ and $\lambda>0)$ be the positive Linnik distribution  with 
Laplace-Stieltjes  transform $\lambda/(\lambda+x^\alpha)$.
Then the density of ${\rm PL}(\alpha,\lambda)$ is 
\begin{align}
w_\alpha(t\vert\lambda) 
 &= \lambda \int_0^\infty   f_\alpha(t\vert  u) \, e^{-\lambda u}\, du 
   = -\dfrac{d}{dt}E_{\alpha}(-\lambda t^\alpha)  
= \int_0^\infty e^{-tu} \, u \, w_{\alpha,\alpha}(u \vert \lambda) \, du
\label{eq:PLdensity} 
\end{align}
where $w_{\alpha,\alpha}(u\vert\lambda)$   of~$(\ref{eq:wMLalpha})$ is the mixture   of the $\alpha$-stable density with respect to the 
Mittag-Leffler distribution  $P_\alpha$ of Corollary~$\ref{cor:MLconv1}$.
\end{corollary}
\begin{proof}[Proof of Corollary~\ref{cor:PositiveLinnik}]
The first expression of $w_\alpha(t\vert\lambda)$is the well-known mixture of~$(\ref{eq:wexp})$.
The second  expression is equally well-known, being the derivative of the positive  Linnik  distribution 
${\rm PL}(t\vert \alpha,\lambda)=1-E_\alpha(-\lambda t^\alpha)$.
The third  expression is the Laplace transform of $u  w_{\alpha,\alpha}(u \vert \lambda)$ induced by the
fact that $E_\alpha(-\lambda t^\alpha)$ is the Laplace transform of $w_{\alpha,\alpha}(u \vert \lambda)$, 
as derived in Corollary~\ref{cor:MLconv1Composition}.
\end{proof}
Corollary~\ref{cor:PositiveLinnik} shows that the positive  Linnik  distribution  ${\rm PL}(\alpha,\lambda)$ 
may be looked upon as  induced by the  Mittag-Leffler distribution  $P_\alpha$ 
through the mixture  $w_{\alpha,\alpha}(\cdot \vert \lambda)$.
$P_\alpha$ itself may have been derived by any approach, be it the triangular P{\' o}lya urn  model (Janson~\cite{Janson, Janson2024})
 or the convolution construction 
of Corollary~\ref{cor:MLconv1}.

Finally, Theorem~\ref{thm:MLfunction} also enables an alternative and direct construction  of $E_\alpha(-\lambda x^\alpha)$ 
as the Laplace transform of the density~(\ref{eq:wMLalpha}), as shown in Corollary~\ref{cor:MLconv2} below.
However, the Mittag-Leffler distribution $P_\alpha$ does not feature  here and cannot be  inferred from this construction.

\begin{corollary}
\label{cor:MLconv2}
For $0<\alpha<1$, the representation  $(\ref{eq:MLconv2})$ directly implies that 
$E_\alpha(-yt^\alpha)$ as a function of $t$  given $y>0$ is the Laplace transform of a   density $q_\alpha(u\vert y)$
\begin{align}
y E_\alpha(-y t^\alpha) &=  \int_0^\infty  f_\alpha(t\vert  u)  \{\rho_{1/\alpha-1} \star \rho_{1,y}\}(u) \, du \nonumber \\
\implies 
E_\alpha(-y t^\alpha) &=    \int_0^\infty    e^{-tu} \,  q_\alpha(u\vert y)\, du   \quad (t\ge0)
\label{eq:MLconv2LSt} \\
{\rm where}\quad 
 q_\alpha(u\vert y)  
 &=  \frac{1}{y\pi}\,{\rm Im}\left \{\widetilde\rho_{1/\alpha-1}(e^{-i\pi\alpha}u^\alpha) \,  \widetilde\rho_{1,y}(e^{-i\pi\alpha}u^\alpha)\right\} 
 \label{eq:MLconv2distribution0} \\
 &=  \frac{\sin\pi\alpha}{\pi}\, \frac{y\, u^{\alpha-1}} {y^2+2y \, u^\alpha \cos\pi\alpha+u^{2\alpha}} \, du   
\label{eq:MLconv2density}
\end{align}
\end{corollary}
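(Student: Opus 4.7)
The plan is to apply Pollard's integral representation (\ref{eq:Pollard}) for the stable density to the mixture representation (\ref{eq:MLconv2}), swap the order of integration, and recognize the inner integral via the convolution theorem. Starting from $yE_\alpha(-yt^\alpha) = \int_0^\infty f_\alpha(t\vert u)\,\{\rho_{1/\alpha-1}\star\rho_{1,y}\}(u)\,du$, I would substitute
\[
f_\alpha(t\vert u) = \frac{1}{\pi}\,\mathrm{Im}\int_0^\infty e^{-tv}\,e^{-u(e^{-i\pi}v)^\alpha}\,dv,
\]
exchange the order of the $u$ and $v$ integrations (justified by absolute integrability coming from the rapid decay of the stable density and the exponential factor $e^{-yu}$ present in $\rho_{1,y}$), and then identify the inner $u$-integral as the Laplace transform of the convolution $\rho_{1/\alpha-1}\star\rho_{1,y}$ evaluated at the complex argument $s = e^{-i\pi\alpha}v^\alpha$.

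By the convolution theorem that inner Laplace transform factorises as $\widetilde\rho_{1/\alpha-1}(s)\,\widetilde\rho_{1,y}(s)$. Dividing by $y$, the outer $v$-integral then has the form of an ordinary Laplace transform in $v$, whose integrand (read through $\frac{1}{y\pi}\,\mathrm{Im}\{\cdot\}$) must be the sought density $q_\alpha(u\vert y)$; relabelling $v\to u$ gives~(\ref{eq:MLconv2distribution0}). This is the same pattern already exploited in Corollaries~\ref{cor:CompositionLaplacePollard} and~\ref{cor:MLconv1Composition}, so the key point is just identifying the right factorisation, not any fresh inversion.

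To obtain the explicit form~(\ref{eq:MLconv2density}), I would substitute $\widetilde\rho_{1/\alpha-1}(s)=s^{1-1/\alpha}$ and $\widetilde\rho_{1,y}(s)=y/(y+s)$ at $s=e^{-i\pi\alpha}u^\alpha$. Using $(e^{-i\pi\alpha})^{1-1/\alpha}=e^{i\pi(1-\alpha)}$, the product becomes $y\,e^{i\pi(1-\alpha)}u^{\alpha-1}/(y+e^{-i\pi\alpha}u^\alpha)$. Rationalising by multiplying numerator and denominator by $y+e^{i\pi\alpha}u^\alpha$, the denominator becomes the real quantity $y^2+2yu^\alpha\cos\pi\alpha+u^{2\alpha}$, while the numerator reduces to $e^{i\pi(1-\alpha)}y-u^\alpha$. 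Taking the imaginary part picks out $y\sin\pi(1-\alpha)=y\sin\pi\alpha$, and dividing by $y\pi$ (with the $u^{\alpha-1}$ factor restored) yields exactly~(\ref{eq:MLconv2density}).

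The main obstacle is bookkeeping: keeping the complex phases consistent when raising $e^{-i\pi\alpha}u^\alpha$ to a negative fractional power, and ensuring the rationalisation picks out the correct imaginary part (sign errors are easy here). There is no deeper analytic difficulty, as the interchange of integrals and the convolution theorem apply routinely; indeed, one sanity check is that the resulting $q_\alpha(u\vert y)$ must coincide with $w_\alpha(u\vert y)$ of~(\ref{eq:wMLalpha}) with $\lambda=y$, providing an independent cross-verification.
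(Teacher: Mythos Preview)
Your proposal is correct and follows essentially the same route as the paper: the paper invokes Corollary~\ref{cor:CompositionLaplacePollard} (specifically (\ref{eq:CompositionLaplacePollardConv})) to pass from the mixture (\ref{eq:MLconv2}) to the Pollard-type integral with integrand $\widetilde\rho_{1/\alpha-1}(e^{-i\pi\alpha}u^\alpha)\,\widetilde\rho_{1,y}(e^{-i\pi\alpha}u^\alpha)$, whereas you unpack that corollary inline by substituting (\ref{eq:Pollard}) and applying the convolution theorem --- but the underlying argument and the subsequent algebraic simplification to (\ref{eq:MLconv2density}) are the same.
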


\begin{proof}[Proof of Corollary~\ref{cor:MLconv2}]
The  Laplace  transforms of $\rho_{1/\alpha-1}(t)$  and $\rho_{1,y}(t)$ are $\widetilde\rho_{1/\alpha-1}(x)=x^{1-1/\alpha}$ and 
$\widetilde\rho_{1,y}(x)=y/(y+x)$ respectively.
Hence, using the representation (\ref{eq:CompositionLaplacePollardConv}) of  Corollary~\ref{cor:CompositionLaplacePollard}
in $(\ref{eq:MLconv2})$ gives 
\begin{align*}
y E_\alpha(-y t^\alpha) 
&=  \int_0^\infty  f_\alpha(t\vert  u)  \{\rho_{1/\alpha-1} \star \rho_{1,y}\}(u) \, du \\
 &=  \frac{1}{\pi}\,{\rm Im} \int_0^\infty e^{-tu}\, \widetilde\rho_{1/\alpha-1}(e^{-i\pi\alpha}u^\alpha) \,  \widetilde\rho_{1,y}(e^{-i\pi\alpha}u^\alpha) \, du \\
&=  \frac{y}{\pi}\,{\rm Im} \int_0^\infty e^{-tu}\, \frac{(e^{-i\pi\alpha}u^\alpha)^{1-1/\alpha}}{y+  e^{-i\pi\alpha}u^\alpha} \, du \\
\implies E_\alpha(-y t^\alpha) 
&= \frac{1}{\pi}\,{\rm Im} \int_0^\infty e^{-tu}\, \frac{(e^{-i\pi}u)^{\alpha-1}}{y+  (e^{-i\pi}u)^\alpha} \, du \\
&= \frac{\sin\pi\alpha}{\pi} \int_0^\infty e^{-tu}\, \frac{y\, u^{\alpha-1}}{y^2+2y \, u^\alpha \cos\pi\alpha+u^{2\alpha}} \, du
\end{align*}
thereby proving that $E_\alpha(-y t^\alpha)$ is the Laplace transform   of  the density (\ref{eq:MLconv2density}).
\end{proof}

We summarise the results of  Theorem~\ref{thm:MLfunction} in Table~\ref{table:MLDensities}.
We may now readily generalise the foregoing construction, based on the exponentially distributed $F$, to that
based on  the  gamma distribution with  general shape parameter.

 \section{Gamma  Distributed  $F$}
 \label{sec:gammaF}
 
 Let $F\equiv G(\gamma,y)$  with density  $\rho_{\gamma,y}$ be the   gamma distribution as defined in Section~\ref{sec:gamma},
  with shape parameter  $\gamma>0$ and scale parameter $y>0$. 
Explicitly, $dG(t\vert\gamma,y)=y^\gamma t^{\gamma-1}/ \Gamma(\gamma) e^{-yt}dt$
with  Laplace-Stieltjes transform  
\begin{align}
\psi( x\vert \gamma,y)   &= \int_0^\infty  e^{-x t} \,dG(t\vert \gamma,y) = \frac{y^\gamma}{ (y+ x)^{\gamma}}
\implies \psi( x^\alpha\vert \gamma,y)   
=  \frac{y^\gamma}{ (y+ x^\alpha)^{\gamma}}
\label{eq:gammaLTcomposition} 
\end{align}
For $0<\alpha<1$, Theorem~\ref{thm:CompositionLaplace} states that~(\ref{eq:gammaLTcomposition})
is the Laplace transform of  a mixture density 
\begin{align}
\psi( x^\alpha\vert \gamma,y)  &= \int_0^\infty e^{-x t} \, w_\alpha(t\vert \gamma,y)\, dt \\
\textrm{where}\quad w_\alpha(t\vert \gamma,y) &=  \int_0^\infty   f_\alpha(t\vert  u) \,dG(u\vert \gamma,y) 
\label{eq:wgamma}
\end{align}

\begin{theorem}
\label{thm:ML3parfunction}
Let $0<\alpha<1$, $w_\alpha(t\vert \gamma, y)$  $(\gamma>0)$ given by~$(\ref{eq:wgamma})$
and $\rho_{\beta-\alpha\gamma}(t)=t^{\beta-\alpha\gamma-1}/\Gamma(\beta-\alpha\gamma)$ for $\beta-\alpha\gamma>0$.
More compactly,  $0<\alpha<1$, $0<\alpha\gamma<\beta$.
Then $t^{\beta-1}E^\gamma_{\alpha,\beta}(-y t^\alpha)$, 
where  $E^\gamma_{\alpha,\beta}(\cdot)$  is the 3-parameter Mittag-Leffler or Prabhakar function,   
has the  convolution representation
\begin{align}
y^\gamma \, t^{\beta-1}E^\gamma_{\alpha,\beta}(-y t^\alpha) &=  \{ \rho_{\beta-\alpha\gamma}\star w_\alpha(\cdot\vert \gamma, y)\}(t) 
\label{eq:ML3parconv} \\
&\equiv  \int_0^\infty   \{\rho_{\beta-\alpha\gamma} \star  f_\alpha(\cdot \vert  u)\}(t) \,dG(u\vert \gamma,y)  
\label{eq:ML3parconv1} \\
&=  \int_0^\infty  f_\alpha(t\vert  u) \{\rho_{\beta/\alpha-\gamma} \star \rho_{\gamma,y}\}(u) \, du 
\label{eq:ML3parconv2}
\end{align}
\end{theorem}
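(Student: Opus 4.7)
The plan is to mirror exactly the strategy of Theorem~\ref{thm:MLfunction}, replacing the exponential case $G(1,y)$ by the general gamma case $G(\gamma,y)$. The key observation is that the Prabhakar Laplace identity~(\ref{eq:LaplaceML3par}) already exhibits the product structure we need: $s^{\alpha\gamma-\beta}/(y+s^\alpha)^\gamma$ factors as a power-type Laplace transform times the Laplace transform of the gamma density evaluated at $s^\alpha$.

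The first step is to invoke Theorem~\ref{thm:CompositionLaplace} with $F\equiv G(\gamma,y)$, yielding that $\psi(x^\alpha\vert\gamma,y)=y^\gamma/(y+x^\alpha)^\gamma$ is the Laplace transform of the mixture density $w_\alpha(t\vert\gamma,y)$ defined in~(\ref{eq:wgamma}). Next, I would multiply this by $x^{-(\beta-\alpha\gamma)}$, which under the stated hypothesis $\beta-\alpha\gamma>0$ is precisely the Laplace transform~(\ref{eq:powerLT}) of the power density $\rho_{\beta-\alpha\gamma}(t)$. The resulting product is
\begin{align*}
x^{-(\beta-\alpha\gamma)}\cdot\frac{y^\gamma}{(y+x^\alpha)^\gamma} \;=\; y^\gamma\cdot\frac{x^{\alpha\gamma-\beta}}{(y+x^\alpha)^\gamma},
\end{align*}
which by~(\ref{eq:LaplaceML3par}) equals $y^\gamma$ times the Laplace transform of $t^{\beta-1}E^\gamma_{\alpha,\beta}(-y t^\alpha)$. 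The convolution theorem then identifies the left-hand side as the Laplace transform of $\{\rho_{\beta-\alpha\gamma}\star w_\alpha(\cdot\vert\gamma,y)\}(t)$, and uniqueness of Laplace transforms yields~(\ref{eq:ML3parconv}).

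The equivalent form~(\ref{eq:ML3parconv1}) is obtained immediately by substituting the defining integral~(\ref{eq:wgamma}) for $w_\alpha(t\vert\gamma,y)$ and pushing the convolution with $\rho_{\beta-\alpha\gamma}$ inside the outer integral in $u$ (justified by Fubini since all integrands are nonnegative). The second equivalent form~(\ref{eq:ML3parconv2}) then follows at once from Corollary~\ref{cor:CompositionLaplaceconv}, applied with $F=G(\gamma,y)$, $f=\rho_{\gamma,y}$, $\lambda=1$, and $\nu=\beta-\alpha\gamma$, so that $\nu/\alpha=\beta/\alpha-\gamma$.

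There is no real obstacle here; the proof is a routine generalisation of Theorem~\ref{thm:MLfunction} together with an appeal to Corollary~\ref{cor:CompositionLaplaceconv}. The only point deserving attention is how the parameter restriction $\beta-\alpha\gamma>0$ enters: it is exactly the condition that the auxiliary power density $\rho_{\beta-\alpha\gamma}$, whose Laplace transform $x^{-(\beta-\alpha\gamma)}$ provides the missing $x^{\alpha\gamma-\beta}$ factor in~(\ref{eq:LaplaceML3par}), has positive index and is therefore a well-defined nonnegative function whose convolution with $w_\alpha(\cdot\vert\gamma,y)$ makes sense. The restriction thus appears naturally rather than being imposed from outside.
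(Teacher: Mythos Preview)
Your proof is correct and follows essentially the same route as the paper: compute the Laplace transform of the convolution $\{\rho_{\beta-\alpha\gamma}\star w_\alpha(\cdot\vert\gamma,y)\}(t)$ as the product $x^{\alpha\gamma-\beta}\,\psi(x^\alpha\vert\gamma,y)$, match it with~(\ref{eq:LaplaceML3par}), and then invoke the definition of $w_\alpha$ and Corollary~\ref{cor:CompositionLaplaceconv} for the two alternate forms. Your additional remarks on Fubini and on the role of the condition $\beta-\alpha\gamma>0$ are helpful elaborations but do not depart from the paper's argument.
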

\begin{proof}[Proof of Theorem  \ref{thm:ML3parfunction}]
The convolution $\{\rho_{\beta-\alpha\gamma}\star w_\alpha(\cdot \vert \gamma, y)\}(t)$ of $w_\alpha(t\vert \gamma, y)$ and  
$\rho_{\beta-\alpha\gamma}(t)$  (Laplace  transform  $x^{\alpha\gamma-\beta}$) gives the following
\begin{align*}
x^{\alpha\gamma-\beta} \psi( x^\alpha\vert \gamma, y)  =  y^\gamma \,  \frac{x^{\alpha\gamma-\beta}} {(y+ x^\alpha)^{\gamma}}
&= y^\gamma  \int_0^\infty e^{-x t} \, t^{\beta-1}E^\gamma_{\alpha,\beta}(-y t^\alpha) \, dt  \\
&= \int_0^\infty e^{-x t} \, \{\rho_{\beta-\alpha\gamma}\star w_\alpha(\cdot \vert \gamma, y)\}(t) \, dt  
\end{align*}
where  the Laplace transform of  $t^{\beta-1}E^\gamma_{\alpha,\beta}(-y t^\alpha)$ is read from~(\ref{eq:LaplaceML3par}).
Hence $y^\gamma t^{\beta-1}E^\gamma_{\alpha,\beta}(-y t^\alpha) =   
\{\rho_{\beta-\alpha\gamma}\star w_\alpha(\cdot\vert y)\}(t)$, which is identical to~(\ref{eq:ML3parconv1}) by 
the definition~(\ref{eq:wgamma}) of  $w_\alpha(t\vert \gamma, y)$.
The equality of (\ref{eq:ML3parconv1})  and (\ref{eq:ML3parconv2}) follows from 
Corollary~\ref{cor:CompositionLaplaceconv} of Theorem~\ref{thm:CompositionLaplace}.
\end{proof}

As in Theorem~\ref{thm:MLfunction} above, 
the equivalent  representations  (\ref{eq:ML3parconv1})  and (\ref{eq:ML3parconv2}) have different outcomes.
We start with  (\ref{eq:ML3parconv1}) which gives a  definition of the  3-parameter Mittag-Leffler distribution  as a generalisation of the single
parameter case.

\begin{corollary}[3-parameter Mittag-Leffler distribution]
\label{cor:ML3parconv1}
For $0<\alpha<1$, $0<\alpha\gamma<\beta$,  
$\Gamma(\beta)t^{\beta-1}E^\gamma_{\alpha,\beta}(-xt^\alpha)$  as a function of $x$ given $t>0$
is the Laplace-Stieltjes transform of a   3-parameter  
distribution $P_{\alpha,\beta,\gamma}(u \vert t)$ with density $p_{\alpha,\beta,\gamma}(u\vert t)$ 
\begin{align}
\Gamma(\beta)  t^{\beta-1} E^\gamma_{\alpha,\beta}(-xt^\alpha) &= \int_0^\infty e^{-xu} \, dP_{\alpha,\beta,\gamma}(u\vert t)  \quad (x\ge0)
\label{eq:LStML3parconv1distribution} \\
{\rm where}\quad p_{\alpha,\beta,\gamma}(u\vert t) 
&= \Gamma(\beta) \, \rho_\gamma(u) \{\rho_{\beta-\alpha\gamma}\star f_\alpha(\cdot\vert u)\}(t)
 \label{eq:ML3parconv1distribution} \\
&= \Gamma(\beta) \, \rho_\gamma(u) u^{\beta/\alpha-\gamma}  \int_0^\infty  f_\alpha(t \vert uv) \rho_{\beta/\alpha-\gamma}(v-1)\mathbbm{1}_{[1,\infty)} dv 
 \label{eq:ML3parconv1distribution1}  \\
  &= \frac{\Gamma(\beta)  \rho_\gamma(u)}{\pi}\,{\rm Im}  \int_0^\infty e^{-tv} \, (e^{-i\pi}v)^{\alpha\gamma-\beta} \, e^{-u (e^{-i\pi}v)^\alpha}  \, dv 
 \label{eq:ML3parconv1distribution2} \\
 &= \frac{\Gamma(\beta)}{\Gamma(\gamma)} \, u^{\gamma-1}  
       \sum_{k=0}^\infty \frac{(-u)^k}{k!} \frac{t^{\beta-\alpha\gamma-\alpha k-1}}{\Gamma(\beta-\alpha\gamma-\alpha k)}   
 \label{eq:ML3parconv1distributioninfsum}
 \end{align}
For $t=1$, $P_{\alpha,\beta,\gamma}(u)\equiv P_{\alpha,\beta,\gamma}(u\vert 1)$ 
with density $p_{\alpha,\beta,\gamma}(u)\equiv p_{\alpha,\beta,\gamma}(u\vert 1)$ 
is the 3-parameter Mittag-Leffler distribution with Laplace-Stieltjes transform  $\Gamma(\beta)E^\gamma_{\alpha,\beta}(-x)$
\begin{align}
\Gamma(\beta) E^\gamma_{\alpha,\beta}(-x) &=  \int_0^\infty    e^{-xu} \,  dP_{\alpha,\beta,\gamma}(u) 
= \int_0^\infty    e^{-xu} \,  p_{\alpha,\beta,\gamma}(u) \, du
\label{eq:LStML3pardistribution} \\
{\rm where}\quad 
 p_{\alpha,\beta,\gamma}(u) &= \Gamma(\beta) \, \rho_\gamma(u) \{\rho_{\beta-\alpha\gamma}\star f_\alpha(\cdot\vert u)\}(1)
 \label{eq:ML3pardistribution} \\
 &= \Gamma(\beta) \rho_\gamma(u) u^{\beta/\alpha-\gamma}   \int_0^\infty  f_\alpha(1 \vert uv) \rho_{\beta/\alpha-\gamma}(v-1)\mathbbm{1}_{[1,\infty)} dv
 \label{eq:ML3pardistribution1} \\
 &= \frac{\Gamma(\beta)  \rho_\gamma(u)}{\pi}\,{\rm Im}  \int_0^\infty e^{-v} \, (e^{-i\pi}v)^{\alpha\gamma-\beta} \, e^{-u (e^{-i\pi}v)^\alpha}  \, dv 
 \label{eq:ML3pardistribution2} \\
 &= \frac{\Gamma(\beta)}{\Gamma(\gamma)} \, u^{\gamma-1}  
      \sum_{k=0}^\infty \frac{(-u)^k}{k!} \frac{1}{\Gamma(\beta-\alpha\gamma-\alpha k)}   
 \label{eq:ML3pardistributioninfsum}
\end{align}
The moments $M_{\alpha,\beta,\gamma;k}$ $k\ge0$ of $P_{\alpha,\beta,\gamma}$ are
\begin{align}
 M_{\alpha,\beta,\gamma;k} &\equiv  \int_0^\infty u^k \, dP_{\alpha,\beta,\gamma}(u) 
= \frac{\Gamma(\beta)\Gamma(\gamma+k)}{\Gamma(\gamma)\Gamma(\alpha k+\beta)}  
\label{eq:ML3parmoments}
\end{align}
\end{corollary}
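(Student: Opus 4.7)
The plan is to extract the distribution $P_{\alpha,\beta,\gamma}(u\vert t)$ directly from Theorem~\ref{thm:ML3parfunction} by re-reading equation~(\ref{eq:ML3parconv1}) as a Laplace-Stieltjes transform in the variable $y$ (renamed $x$ in the corollary). Substituting $dG(u\vert\gamma,y) = y^\gamma \rho_\gamma(u) e^{-yu} du$ and cancelling $y^\gamma$ on both sides gives $t^{\beta-1} E^\gamma_{\alpha,\beta}(-yt^\alpha) = \int_0^\infty e^{-yu} \rho_\gamma(u) \{\rho_{\beta-\alpha\gamma}\star f_\alpha(\cdot\vert u)\}(t)\, du$. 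Multiplying through by $\Gamma(\beta)$ exhibits the integrand as the candidate density $p_{\alpha,\beta,\gamma}(u\vert t)$, thereby establishing (\ref{eq:LStML3parconv1distribution}) together with form (\ref{eq:ML3parconv1distribution}). Nonnegativity of this density is immediate: under the hypotheses $0<\alpha<1$, $\gamma>0$, and $\beta-\alpha\gamma>0$, both $\rho_\gamma(u)$ and the convolution $\{\rho_{\beta-\alpha\gamma}\star f_\alpha(\cdot\vert u)\}(t)$ are nonnegative for $u,t>0$, so the Bernstein characterisation of CM functions is satisfied by construction rather than by appeal to the external analytic arguments in Gorska {\it et al.}\ or Gripenberg.

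The three alternative forms then follow directly from Theorem~\ref{thm:rhoconvstable} applied with $\nu = \beta - \alpha\gamma$, noting that $\nu/\alpha = \beta/\alpha - \gamma$. Specifically, equation~(\ref{eq:rhoconvstable2}) yields (\ref{eq:ML3parconv1distribution1}); equation~(\ref{eq:rhoconvstable3}) yields the complex integral (\ref{eq:ML3parconv1distribution2}) once one rewrites $e^{i\pi\nu}v^{-\nu}$ as $(e^{-i\pi}v)^{-\nu}$; and the series~(\ref{eq:rhoconvstableinfsum1}), combined with $\Gamma(\beta)\rho_\gamma(u) = \Gamma(\beta) u^{\gamma-1}/\Gamma(\gamma)$, produces~(\ref{eq:ML3parconv1distributioninfsum}). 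The specialisation to $t=1$ producing~(\ref{eq:LStML3pardistribution})–(\ref{eq:ML3pardistributioninfsum}) is mechanical, and the total mass $\int_0^\infty dP_{\alpha,\beta,\gamma}(u) = \Gamma(\beta) E^\gamma_{\alpha,\beta}(0) = 1$ confirms that $P_{\alpha,\beta,\gamma}$ is a genuine probability distribution (whereas for general $t$ the measure $P_{\alpha,\beta,\gamma}(\cdot\vert t)$ has total mass $t^{\beta-1}$).

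For the moments, I would differentiate the Prabhakar series~(\ref{eq:ML3parseries}) of $\Gamma(\beta)E^\gamma_{\alpha,\beta}(-x)$ termwise at $x=0$, obtaining $M_{\alpha,\beta,\gamma;k} = (-1)^k\,d^k/dx^k\bigl[\Gamma(\beta)E^\gamma_{\alpha,\beta}(-x)\bigr]_{x=0} = \Gamma(\beta)\Gamma(\gamma+k)/[\Gamma(\gamma)\Gamma(\alpha k+\beta)]$, which is (\ref{eq:ML3parmoments}). I do not anticipate any serious obstacles: once Theorems~\ref{thm:ML3parfunction} and~\ref{thm:rhoconvstable} are in hand, the entire argument is a rewriting exercise coupled with an appeal to uniqueness of Laplace transforms. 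The only subtlety worth flagging is that the factor $1/\Gamma(\beta-\alpha\gamma-\alpha k)$ appearing in~(\ref{eq:ML3parconv1distributioninfsum}) and~(\ref{eq:ML3pardistributioninfsum}) must be read via analytic continuation when $\beta-\alpha\gamma-\alpha k\le 0$, exactly as was already justified in the proof of Theorem~\ref{thm:rhoconvstable} via Euler's reflection formula.
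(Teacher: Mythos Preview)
Your proposal is correct and follows essentially the same approach as the paper: substitute $dG(u\vert\gamma,y) = y^\gamma \rho_\gamma(u) e^{-yu}\,du$ into~(\ref{eq:ML3parconv1}), cancel $y^\gamma$ and scale by $\Gamma(\beta)$ to obtain~(\ref{eq:ML3parconv1distribution}), then invoke the various forms of $\{\rho_{\beta-\alpha\gamma}\star f_\alpha(\cdot\vert u)\}(t)$ from Theorem~\ref{thm:rhoconvstable} for the alternative representations, with the moments read off as $\Gamma(\beta)(-1)^k E^\gamma_{\alpha,\beta}{}^{(k)}(0)$. Your additional remarks on nonnegativity, normalisation, and the analytic continuation of $1/\Gamma(\beta-\alpha\gamma-\alpha k)$ are helpful clarifications but do not alter the underlying argument.
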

\begin{proof}[Proof of Corollary   \ref{cor:ML3parconv1}]
Using $dG(u\vert\gamma,y) 
= y^\gamma \rho_\gamma(u) e^{-yu}\,du$
 in~(\ref{eq:ML3parconv1})  and scaling by $\Gamma(\beta)$ gives~(\ref{eq:LStML3parconv1distribution})
 with the density $p_{\alpha,\beta,\gamma}(u\vert t)$ given by~(\ref{eq:ML3parconv1distribution}).
The equivalent forms (\ref{eq:ML3parconv1distribution1}), \ref{eq:ML3parconv1distribution2})  
and (\ref{eq:ML3parconv1distributioninfsum}) follow from expressing the convolution  density 
$\{\rho_{\beta-\alpha\gamma}\star f_\alpha(\cdot\vert u)\}(t)$ in the respective forms (\ref{eq:rhoconvstable2}),  (\ref{eq:rhoconvstable3}) 
and (\ref{eq:rhoconvstableinfsum}) of  Theorem~\ref{thm:rhoconvstable}.
Setting  $t=1$ gives~(\ref{eq:LStML3pardistribution}) with the density $p_{\alpha,\beta,\gamma}(u)$ given by~(\ref{eq:ML3pardistribution})
and corresponding equivalent forms.
The moments~(\ref{eq:ML3parmoments}) follow  as $ \Gamma(\beta) (-1)^k E^\gamma_{\alpha,\beta}{}^{(k)}(0)$.
\end{proof}

M\"{o}hle~\cite[Lemma~2]{Mohle} proved (\ref{eq:ML3pardistributioninfsum}) by reference  to  a representation based on the Wright function by 
Tomovski {\it et al.}~\cite[Theorem~2]{Tomovski}. 
M\"{o}hle also cites Flajolet~{\it  et al.}~\cite[(80)]{Flajolet} for the same result, obtained by complex analytic inversion.

\begin{corollary}[Composition]
\label{cor:ML3parconv1Composition}
For $0<\alpha<1, 0<\sigma<1$ and $0<\alpha\gamma<\beta$, the composition $\Gamma(\beta)E^\gamma_{\alpha,\beta}(-\lambda x^\sigma)$ 
as a function of $x$ given  $\lambda>0$ is the Laplace transform of a mixture  density $w_{\sigma,\alpha}(t\vert \lambda, \gamma, \beta)$
\begin{align}
\Gamma(\beta)E^\gamma_{\alpha,\beta}(-\lambda x^\sigma) &= \int_0^\infty  e^{-x t} \,  w_{\sigma,\alpha}(t\vert \lambda, \gamma, \beta)\,  dt \quad (x\ge0) 
\label{eq:LStML3parComposition} \\
\textrm{where}\quad w_{\sigma,\alpha}(t\vert \lambda, \gamma, \beta)&=  \int_0^\infty   f_\sigma(t\vert  \lambda u) \,dP_{\alpha,\beta,\gamma}(u) 
\label{eq:wML3par} \\
&= \phantom{-} \frac{\Gamma(\beta)}{\pi} \, {\rm Im} \int_0^\infty e^{-tu} \, E^\gamma_{\alpha,\beta}(-\lambda e^{-i\pi\sigma}u^\sigma) \, du  
\label{eq:wML3parPollard} \\
 &= -\frac{1}{\pi} \sum_{k=0}^\infty (-\lambda)^k\sin(\pi\sigma k) \, M_{\alpha,\beta,\gamma;k} \frac{\Gamma(\sigma k+1)}{t^{\sigma k+1}} 
\label{eq:wML3parPollardseries} 
 \end{align} 
\end{corollary}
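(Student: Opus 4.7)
The plan is to imitate the proof of Corollary~\ref{cor:MLconv1Composition} but with the 3-parameter Mittag-Leffler distribution $P_{\alpha,\beta,\gamma}$ playing the role that $P_\alpha$ played there. The key enabling input is Corollary~\ref{cor:ML3parconv1}, which establishes that $\Gamma(\beta)E^\gamma_{\alpha,\beta}(-x)$ is precisely the Laplace-Stieltjes transform of $P_{\alpha,\beta,\gamma}$ under the parameter restrictions $0<\alpha<1$, $0<\alpha\gamma<\beta$, together with the explicit moment formula~\eqref{eq:ML3parmoments}. Once those are in hand, everything reduces to invoking Theorem~\ref{thm:CompositionLaplace} and its corollaries with a specific choice of $F$ and $\psi$.

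First I would set $F(u)\equiv P_{\alpha,\beta,\gamma}(u)$ and $\psi(x)\equiv \Gamma(\beta)E^\gamma_{\alpha,\beta}(-x)$, so that by Corollary~\ref{cor:ML3parconv1} the pair $(F,\psi)$ satisfies the hypothesis~\eqref{eq:LSF} of Theorem~\ref{thm:CompositionLaplace}. Applying Theorem~\ref{thm:CompositionLaplace} with this $\psi$ and with the stability index $\sigma$ in place of $\alpha$ (permissible because $0<\sigma<1$) immediately yields~\eqref{eq:LStML3parComposition} with the mixture density defined by~\eqref{eq:wML3par}, namely $w_{\sigma,\alpha}(t\vert\lambda,\gamma,\beta)=\int_0^\infty f_\sigma(t\vert\lambda u)\,dP_{\alpha,\beta,\gamma}(u)$.

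Next, to obtain~\eqref{eq:wML3parPollard} I would invoke Corollary~\ref{cor:CompositionLaplacePollard} (specifically the representation~\eqref{eq:CompositionLaplacePollard}) with the same $F$ and $\psi$, producing
\[
w_{\sigma,\alpha}(t\vert\lambda,\gamma,\beta)=\frac{1}{\pi}\,\mathrm{Im}\int_0^\infty e^{-tu}\,\Gamma(\beta)E^\gamma_{\alpha,\beta}(-\lambda e^{-i\pi\sigma}u^\sigma)\,du,
\]
which is exactly~\eqref{eq:wML3parPollard} after pulling the constant $\Gamma(\beta)$ outside. Finally, for the series expansion~\eqref{eq:wML3parPollardseries} I would apply Corollary~\ref{cor:CompositionLaplaceInfseries}, whose hypothesis is satisfied because $P_{\alpha,\beta,\gamma}$ has finite moments of all orders given explicitly by $M_{\alpha,\beta,\gamma;k}$ in~\eqref{eq:ML3parmoments}; substituting these moments into~\eqref{eq:wPollardinfsum2} gives~\eqref{eq:wML3parPollardseries} directly.

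There is no substantive obstacle in the argument: the whole corollary is a packaging of Theorem~\ref{thm:CompositionLaplace} and its three corollaries applied to the specific $(F,\psi)$ provided by Corollary~\ref{cor:ML3parconv1}. The only point that warrants a sentence of care is verifying that the parameter restriction $0<\alpha\gamma<\beta$ is exactly what is needed so that $\Gamma(\beta)E^\gamma_{\alpha,\beta}(-x)$ qualifies as a Laplace-Stieltjes transform in the sense required by Theorem~\ref{thm:CompositionLaplace}, and that the independent restriction $0<\sigma<1$ is what is needed to make $\lambda x^\sigma$ admissible as the argument of the composition. Both are supplied by the hypothesis, so the proof is essentially three citations and a substitution.
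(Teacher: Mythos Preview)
Your proposal is correct and follows essentially the same approach as the paper: set $F\equiv P_{\alpha,\beta,\gamma}$ and $\psi(x)\equiv\Gamma(\beta)E^\gamma_{\alpha,\beta}(-x)$ via Corollary~\ref{cor:ML3parconv1}, then apply Theorem~\ref{thm:CompositionLaplace} (with index $\sigma$) for~\eqref{eq:wML3par}, Corollary~\ref{cor:CompositionLaplacePollard} for~\eqref{eq:wML3parPollard}, and Corollary~\ref{cor:CompositionLaplaceInfseries} with the moments~\eqref{eq:ML3parmoments} for~\eqref{eq:wML3parPollardseries}. The paper's proof is exactly these three citations, and your additional remarks on the parameter restrictions are a sound elaboration.
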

\begin{proof}[Proof of Corollary  \ref{cor:ML3parconv1Composition}]
By Theorem~\ref{thm:CompositionLaplace} with $F(t)\equiv P_{\alpha,\beta,\gamma}(t)$ and  $\psi(x)\equiv\Gamma(\beta) E^\gamma_{\alpha,\beta}(-x)$,  
the composition $\psi(\lambda x^\sigma) \equiv \Gamma(\beta) E^\gamma_{\alpha,\beta}(-\lambda x^\sigma)$ is  the Laplace  transform of a mixture  density
 given by~$(\ref{eq:wML3par})$.
The equivalent forms~$(\ref{eq:wML3parPollard})$ and $(\ref{eq:wML3parPollardseries})$ follow from Corollaries~\ref{cor:CompositionLaplacePollard} and
\ref{cor:CompositionLaplaceInfseries} respectively.
\end{proof}

\begin{corollary}
\label{cor:ML3parconv2}
For $0<\alpha<1$ and $0<\alpha\gamma<\beta$,  the representation  $(\ref{eq:ML3parconv2})$  implies that 
$t^{\beta-1}E^\gamma_{\alpha,\beta}(-y t^\alpha)$ as a function of $t$  given $y>0$ is the Laplace transform of a   density 
$q_{\alpha,\beta,\gamma}(u\vert y)$ 
\begin{align}
y^\gamma \, t^{\beta-1}E^\gamma_{\alpha,\beta}(-y t^\alpha) 
&=  \int_0^\infty  f_\alpha(t\vert  u) \{\rho_{\beta/\alpha-\gamma} \star \rho_{\gamma,y}\}(u) \, du  \nonumber \\
\implies 
t^{\beta-1}E^\gamma_{\alpha,\beta}(-y t^\alpha) &=    \int_0^\infty    e^{-tu} \,  q_{\alpha,\beta,\gamma}(u\vert y) \, du   \quad (t\ge0)
\label{eq:ML3parconv2LSt} \\
{\rm where}\quad 
 q_{\alpha,\beta,\gamma}(u\vert y) 
 &=  \frac{1}{y^\gamma\pi}\,{\rm Im}\left \{\widetilde\rho_{\beta/\alpha-\gamma}(e^{-i\pi\alpha}u^\alpha) \,  \widetilde\rho_{\gamma,y}(e^{-i\pi\alpha}u^\alpha)\right\} 
 \label{eq:ML3parconv2distribution0} \\
&= \frac{1}{\pi}\,{\rm Im} \frac{(e^{-i\pi}u)^{\alpha\gamma-\beta}}{(y+  e^{-i\pi\alpha}u^\alpha)^\gamma}   
\label{eq:ML3parconv2distribution}
\end{align}
\end{corollary}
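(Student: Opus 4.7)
The plan is to invoke the Pollard-style contour representation of Corollary~\ref{cor:CompositionLaplacePollard}, specifically equation~(\ref{eq:CompositionLaplacePollardConv}), applied to the explicit mixture form~(\ref{eq:ML3parconv2}) already supplied by Theorem~\ref{thm:ML3parfunction}. The argument  parallels the special case $\gamma=1,\beta=1$ treated in Corollary~\ref{cor:MLconv2}, with $\rho_{1/\alpha-1}$ and $\rho_{1,y}$ replaced throughout by $\rho_{\beta/\alpha-\gamma}$ and $\rho_{\gamma,y}$ respectively. Note that the standing constraint $0<\alpha\gamma<\beta$ is precisely what guarantees $\beta/\alpha-\gamma>0$, so $\rho_{\beta/\alpha-\gamma}$ is a genuine power density and its Laplace transform~(\ref{eq:powerLT}) is unambiguous throughout the stated range.

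First, starting from
$$y^\gamma\, t^{\beta-1}E^\gamma_{\alpha,\beta}(-yt^\alpha)=\int_0^\infty f_\alpha(t\mid u)\,\{\rho_{\beta/\alpha-\gamma}\star\rho_{\gamma,y}\}(u)\,du,$$
I would substitute the Pollard representation~(\ref{eq:Pollard}) for $f_\alpha(t\mid u)$, pull the ${\rm Im}$ outside, and swap the order of integration in $u$ and $v$. The swap is legitimate by absolute integrability, since ${\rm Re}\,(u(e^{-i\pi}v)^\alpha)=uv^\alpha\cos(\pi\alpha)$ has a fixed positive sign for $0<\alpha<1$, giving a dominating exponential decay in $u$. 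The inner $u$-integral is then the Laplace transform of the convolution $\rho_{\beta/\alpha-\gamma}\star\rho_{\gamma,y}$ evaluated at the complex argument $e^{-i\pi\alpha}v^\alpha$, which by the convolution theorem factors as $\widetilde\rho_{\beta/\alpha-\gamma}(e^{-i\pi\alpha}v^\alpha)\,\widetilde\rho_{\gamma,y}(e^{-i\pi\alpha}v^\alpha)$. Dividing by $y^\gamma$ and reading off the integrand yields~(\ref{eq:ML3parconv2distribution0}).

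Next, I would substitute the closed forms $\widetilde\rho_{\beta/\alpha-\gamma}(x)=x^{\gamma-\beta/\alpha}$ from~(\ref{eq:powerLT}) and $\widetilde\rho_{\gamma,y}(x)=y^\gamma/(y+x)^\gamma$ from~(\ref{eq:gammaLT}), cancel the common $y^\gamma$, and collapse the powers via $(e^{-i\pi\alpha}v^\alpha)^{\gamma-\beta/\alpha}=e^{-i\pi(\alpha\gamma-\beta)}v^{\alpha\gamma-\beta}=(e^{-i\pi}v)^{\alpha\gamma-\beta}$, which produces the compact formula~(\ref{eq:ML3parconv2distribution}).

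The main obstacle I anticipate is purely a bookkeeping one: one should check that the $u$-integrand in~(\ref{eq:ML3parconv2distribution}) is genuinely nonnegative so that $q_{\alpha,\beta,\gamma}(u\mid y)$ deserves to be called a density. This is automatic, however, because the starting identity~(\ref{eq:ML3parconv2}) already expresses the left-hand side as the Laplace transform in $t$ of the manifestly nonnegative function $y^{-\gamma}\,f_\alpha(t\mid u)\,\{\rho_{\beta/\alpha-\gamma}\star\rho_{\gamma,y}\}(u)$ of $u$; the contour expression~(\ref{eq:ML3parconv2distribution}) is merely a closed-form rewriting of the same nonnegative density, and uniqueness of Laplace transforms closes the argument. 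The $\gamma=1,\beta=1$ instance~(\ref{eq:MLconv2density}), where nonnegativity is transparent from the explicit rational form, provides a reassuring consistency check.
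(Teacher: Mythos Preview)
Your proposal follows essentially the same route as the paper's own proof: invoke Corollary~\ref{cor:CompositionLaplacePollard} (specifically~(\ref{eq:CompositionLaplacePollardConv})) on the mixture form~(\ref{eq:ML3parconv2}), then insert the explicit Laplace transforms~(\ref{eq:powerLT}) and~(\ref{eq:gammaLT}) and collapse the complex powers via $(e^{-i\pi\alpha}v^\alpha)^{\gamma-\beta/\alpha}=(e^{-i\pi}v)^{\alpha\gamma-\beta}$. One minor correction to a side remark: your justification for the Fubini swap asserts that $\cos(\pi\alpha)>0$ for all $0<\alpha<1$, which fails for $\alpha\ge 1/2$; since the paper's own derivation of Corollary~\ref{cor:CompositionLaplacePollard} already absorbs this interchange, you may simply cite that result rather than re-argue it.
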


\begin{proof}[Proof of Corollary   \ref{cor:ML3parconv2}]
The  Laplace  transforms of $\rho_{\beta/\alpha-\gamma}(t)$  and $\rho_{\gamma,y}(t)$ are respectively 
$x^{\gamma-\beta/\alpha}$ and $y^\gamma/(y+x)^\gamma$. 
Hence, using the representation (\ref{eq:CompositionLaplacePollardConv}) of  Corollary~\ref{cor:CompositionLaplacePollard}
in $(\ref{eq:ML3parconv2})$ gives 
\begin{align*}
y^\gamma \, t^{\beta-1}E^\gamma_{\alpha,\beta}(-y t^\alpha)
&=  \int_0^\infty  f_\alpha(t\vert  u)  \{\rho_{\beta/\alpha-\gamma} \star \rho_{\gamma,y}\}(u) \, du \\
 &=  \frac{1}{\pi}\,{\rm Im} \int_0^\infty e^{-tu}\, \widetilde\rho_{\beta/\alpha-\gamma}(e^{-i\pi\alpha}u^\alpha) \, 
            \widetilde\rho_{\gamma,y}(e^{-i\pi\alpha}u^\alpha) \, du \\
&= \frac{y^\gamma}{\pi}\,{\rm Im} \int_0^\infty e^{-tu}\, 
      \frac{(e^{-i\pi\alpha}u^\alpha)^{\gamma-\beta/\alpha}}{(y+  e^{-i\pi\alpha}u^\alpha)^\gamma}\, du \\
\implies t^{\beta-1}E^\gamma_{\alpha,\beta}(-y t^\alpha)
&= \frac{1}{\pi}\,{\rm Im} \int_0^\infty e^{-tu}\, \frac{(e^{-i\pi}u)^{\alpha\gamma-\beta}}{(y+  e^{-i\pi\alpha}u^\alpha)^\gamma} \, du 
\end{align*}
thereby completing  the proof.
\end{proof}


At face value, the same result can be obtained from  the product of $t^{\beta-1}$ and the composition $E^\gamma_{\alpha,\beta}(-\lambda t^\sigma)$
of Corollary~\ref{cor:ML3parconv1Composition} for $\sigma=\alpha$.
However, $t^{\beta-1}$ then has to be the Laplace transform of the  density $\rho_{1-\beta}(u)=u^{-\beta}/\Gamma(1-\beta)$ for $\beta<1$, 
so that $t^{\beta-1}E^\gamma_{\alpha,\beta}(-\lambda t^\alpha)$ is the Laplace transform of the convolution of two  densities.
By contrast, the additional  condition $\beta<1$ is not needed  for $q_{\alpha,\beta,\gamma}(u\vert y)$ to be a density  in  
Corollary~\ref{cor:ML3parconv2} since it does not rely on looking upon 
$t^{\beta-1}E^\gamma_{\alpha,\beta}(-y t^\alpha)$ as a product of 2 separate completely monotone functions
$t^{\beta-1}$ and $E^\gamma_{\alpha,\beta}(-y t^\alpha)$.

As noted in Section~\ref{sec:otherComplex},
Capelas de Oliveira {\it et al.}~\cite{Oliveira} and Mainardi and Garrappa~\cite{MainardiGarrappa}  
used a complex analytic approach to prove that $t^{\beta-1}E^\gamma_{\alpha,\beta}(-y t^\alpha)$  is the Laplace  transform of a nonnegative function  
for $0<\alpha\le1$ and  $0<\alpha\gamma\le\beta\le1$. 
However, as argued above, the additional  condition $\beta\le1$ can be avoided.

It is easy to  verify that all results of the  single parameter case are special cases of the 3-parameter case.
Corollary~{\ref{cor:MLconv2} is the special case $\beta=\gamma=1$.

We summarise the results of  Theorem~\ref{thm:ML3parfunction} in Table~\ref{table:ML3parDensities}.
It is straightforward to generalise the 3-parameter case to a 4-parameter case, as discussed next.

\subsection{A 4-parameter Mittag-Leffler distribution}
\label{sec:ML4par}

We  note that $\beta-\alpha\gamma$ is invariant under $\beta\to\beta+\theta$ and $\gamma\to\gamma+\theta/\alpha$
for an additional parameter $\theta$.
$\gamma+\theta/\alpha$ is the modified shape parameter of the Gamma distribution   considered in Theorem~\ref{thm:ML3parfunction}
 and it must satisfy $\gamma+\theta/\alpha>0$ or $\theta>-\alpha\gamma$.
Hence we may state an extended form of Theorem~\ref{thm:ML3parfunction}, omitting repetition of proofs

\begin{theorem}
\label{thm:ML4parfunction}
Consider Theorem~\ref{thm:ML3parfunction} with  $\beta\to\beta+\theta$ and $\gamma\to\gamma+\theta/\alpha$ so that 
$\beta-\alpha\gamma$ remains invariant.
Then~$(\ref{eq:wgamma})$ becomes  $w_\alpha(t\vert \gamma+\theta/\alpha, y)$  with $\gamma+\theta/\alpha>0$.
More compactly,  $0<\alpha<1$, $-\theta<\alpha\gamma<\beta$.
Then $t^{\beta+\theta-1}E^{\gamma+\theta/\alpha}_{\alpha,\beta+\theta}(-y t^\alpha)$, 
has the  convolution representation
\begin{align}
y^{\gamma+\theta/\alpha} \, t^{\beta+\theta-1}E^{\gamma+\theta/\alpha}_{\alpha,\beta+\theta}(-y t^\alpha)
&=  \{ \rho_{\beta-\alpha\gamma}\star w_\alpha(\cdot\vert \gamma+\theta/\alpha, y)\}(t) 
\label{eq:ML4parconv} \\
&\equiv  \int_0^\infty   \{\rho_{\beta-\alpha\gamma} \star  f_\alpha(\cdot \vert  u)\}(t) \,dG(u\vert \gamma+\theta/\alpha,y)  
\label{eq:ML4parconv1} \\
&=  \int_0^\infty  f_\alpha(t\vert  u) \{\rho_{\beta/\alpha-\gamma} \star \rho_{\gamma+\theta/\alpha,y}\}(u) \, du 
\label{eq:ML4parconv2}
\end{align}
\end{theorem}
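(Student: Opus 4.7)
The plan is to observe that Theorem~\ref{thm:ML4parfunction} is not a genuinely new result but a reparameterization of Theorem~\ref{thm:ML3parfunction} exploiting a natural invariance. Concretely, set $\gamma' = \gamma + \theta/\alpha$ and $\beta' = \beta + \theta$. Then $\beta' - \alpha\gamma' = (\beta+\theta) - \alpha(\gamma+\theta/\alpha) = \beta - \alpha\gamma$, so the convolution kernel $\rho_{\beta'-\alpha\gamma'}$ appearing on the left of (\ref{eq:ML3parconv}) is identical to $\rho_{\beta-\alpha\gamma}$. Similarly, $\beta'/\alpha - \gamma' = \beta/\alpha - \gamma$, so the power density entering (\ref{eq:ML3parconv2}) is also unchanged.

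With this substitution, the hypotheses of Theorem~\ref{thm:ML3parfunction}, namely $0 < \alpha < 1$ and $0 < \alpha\gamma' < \beta'$, translate to $0 < \alpha\gamma + \theta < \beta + \theta$. The right-hand inequality $\alpha\gamma + \theta < \beta + \theta$ reduces to $\alpha\gamma < \beta$, which is the original constraint $\beta - \alpha\gamma > 0$; the left-hand inequality $\alpha\gamma + \theta > 0$ is exactly $\theta > -\alpha\gamma$. This is the compact rewriting $-\theta < \alpha\gamma < \beta$ stated in the theorem and also guarantees that the gamma-distribution shape parameter $\gamma' = \gamma + \theta/\alpha$ is strictly positive, so that $G(\gamma+\theta/\alpha, y)$ and its density are well-defined and the mixture $w_\alpha(t \vert \gamma+\theta/\alpha, y)$ from (\ref{eq:wgamma}) makes sense.

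Having verified admissibility, I would simply apply Theorem~\ref{thm:ML3parfunction} with $(\gamma, \beta)$ replaced by $(\gamma', \beta')$. The conclusion (\ref{eq:ML3parconv}) becomes
\begin{align*}
y^{\gamma+\theta/\alpha}\, t^{\beta+\theta-1} E^{\gamma+\theta/\alpha}_{\alpha,\beta+\theta}(-yt^\alpha)
&= \{\rho_{\beta-\alpha\gamma} \star w_\alpha(\cdot\vert \gamma+\theta/\alpha, y)\}(t),
\end{align*}
which is (\ref{eq:ML4parconv}). The two equivalent forms (\ref{eq:ML4parconv1}) and (\ref{eq:ML4parconv2}) follow in the same way from (\ref{eq:ML3parconv1}) and (\ref{eq:ML3parconv2}), using the already-established equality of those two representations (Corollary~\ref{cor:CompositionLaplaceconv}).

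There is no real obstacle: the only conceptual step is recognising that the difference $\beta - \alpha\gamma$, which controls the convolution kernel, is invariant under the one-parameter shift and therefore plays the structural role, while $\theta$ acts purely as a tilting parameter on the gamma mixing distribution. The content of the theorem is thus in the \emph{recognition} of this invariance and its implications for a richer parameter family, not in the algebra of its proof.
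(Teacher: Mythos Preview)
Your proposal is correct and matches the paper's approach exactly: the paper itself presents Theorem~\ref{thm:ML4parfunction} as nothing more than Theorem~\ref{thm:ML3parfunction} under the substitution $\beta\to\beta+\theta$, $\gamma\to\gamma+\theta/\alpha$, explicitly noting the invariance of $\beta-\alpha\gamma$ and ``omitting repetition of proofs.'' Your verification of the parameter constraints and the identification of $\theta$ as a pure tilting parameter on the gamma mixing distribution are precisely the intended content.
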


\begin{corollary}[4-parameter Mittag-Leffler distribution]
\label{cor:ML4parconv1}
For $0<\alpha<1$ and $-\theta<\alpha\gamma<\beta$,  
$\Gamma(\beta+\theta) t^{\beta+\theta-1} E^{\gamma+\theta/\alpha}_{\alpha,\beta+\theta}(-xt^\alpha)$ 
as a function of $x$ given $t>0$
is the Laplace-Stieltjes transform of a   4-parameter  
distribution $P_{\alpha,\beta,\gamma,\theta}(\cdot\vert t)$
with density $p_{\alpha,\beta,\gamma,\theta}(u\vert t)$
\begin{align}
\Gamma(\beta+\theta) t^{\beta+\theta-1} &E^{\gamma+\theta/\alpha}_{\alpha,\beta+\theta}(-xt^\alpha) 
= \int_0^\infty e^{-xu } \, dP_{\alpha,\beta,\gamma,\theta}(u\vert t)  \quad (x\ge0)
\label{eq:LStML4parconv1distribution} \\
{\rm where}\; p_{\alpha,\beta,\gamma,\theta}(u\vert t) 
&= \Gamma(\beta+\theta) \, \rho_{\gamma+\theta/\alpha}(u) \{\rho_{\beta-\alpha\gamma}\star f_\alpha(\cdot\vert u)\}(t) 
 \label{eq:ML4parconv1distribution} \\
&= \Gamma(\beta+\theta) \, \rho_{\gamma+\theta/\alpha}(u) u^{\beta/\alpha-\gamma} 
    \int_0^\infty  f_\alpha(t \vert uv) \rho_{\beta/\alpha-\gamma}(v-1)\mathbbm{1}_{[1,\infty)} dv 
 \label{eq:ML4parconv1distribution1}  \\
  &= \frac{\Gamma(\beta+\theta)  \rho_{\gamma+\theta/\alpha}(u)}{\pi}\,{\rm Im}  \int_0^\infty e^{-tv} \, 
 (e^{-i\pi}v)^{\alpha\gamma-\beta} \, e^{-u (e^{-i\pi}v)^\alpha} \, dv 
 \label{eq:ML4parconv1distribution2} \\
 &= \frac{\Gamma(\beta+\theta)}{\Gamma(\gamma+\theta/\alpha)} \, u^{{\gamma+\theta/\alpha}-1}  
       \sum_{k=0}^\infty \frac{(-u)^k}{k!} \frac{t^{\beta-\alpha\gamma-\alpha k-1}}{\Gamma(\beta-\alpha\gamma-\alpha k)}   
 \label{eq:ML4parconv1distributioninfsum}
\end{align}
For $t=1$, $P_{\alpha,\beta,\gamma,\theta}(u)\equiv P_{\alpha,\beta,\gamma,\theta}(u\vert 1)$ 
with density $p_{\alpha,\beta,\gamma,\theta}(u)\equiv p_{\alpha,\beta,\gamma,\theta}(u\vert 1)$
is the 4-parameter Mittag-Leffler distribution 
with Laplace-Stieltjes transform  $\Gamma(\beta+\theta)E^{\gamma+\theta/\alpha}_{\alpha,\beta+\theta}(-x)$
\begin{align}
\Gamma(\beta+\theta) E^{\gamma+\theta/\alpha}_{\alpha,\beta+\theta}(-x) &=    \int_0^\infty    e^{-xu} \,  dP_{\alpha,\beta,\gamma,\theta}(u)
   =  \int_0^\infty    e^{-xu} \,  p_{\alpha,\beta,\gamma,\theta}(u)\, du
\label{eq:LStML4pardistribution} \\
{\rm where}\quad 
 p_{\alpha,\beta,\gamma,\theta}(u) &= \Gamma(\beta+\theta) \, \rho_{\gamma+\theta/\alpha}(u) \{\rho_{\beta-\alpha\gamma}\star f_\alpha(\cdot\vert u)\}(1) 
 \label{eq:ML4pardistribution} \\
 &= \Gamma(\beta+\theta) \rho_{\gamma+\theta/\alpha}(u) u^{\beta/\alpha-\gamma}  
  \int_0^\infty  f_\alpha(1 \vert uv) \rho_{\beta/\alpha-\gamma}(v-1)\mathbbm{1}_{[1,\infty)} dv
 \label{eq:ML4pardistribution1} \\
 &= \frac{\Gamma(\beta+\theta)  \rho_{\gamma+\theta/\alpha}(u)}{\pi}\,{\rm Im}  \int_0^\infty e^{-v} \, 
 (e^{-i\pi}v)^{\alpha\gamma-\beta} \, e^{-u (e^{-i\pi}v)^\alpha} \, dv 
 \label{eq:ML4pardistribution2} \\
 &= \frac{\Gamma(\beta+\theta)}{\Gamma(\gamma+\theta/\alpha)} \, u^{\gamma+\theta/\alpha-1}  
      \sum_{k=0}^\infty \frac{(-u)^k}{k!} \frac{1}{\Gamma(\beta-\alpha\gamma-\alpha k)}   
 \label{eq:ML4pardistributioninfsum}
\end{align}
The moments $M_{\alpha,\beta,\gamma,\theta;k}$ $k\ge0$ of $P_{\alpha,\beta,\gamma,\theta}$ are
\begin{align}
 M_{\alpha,\beta,\gamma,\theta;k} =  \int_0^\infty u^k \, dP_{\alpha,\beta,\gamma,\theta}(u) 
&= \frac{\Gamma(\beta+\theta)\Gamma(\gamma+\theta/\alpha+k)}{\Gamma(\gamma+\theta/\alpha)\Gamma(\alpha k+\beta+\theta)} 
\label{eq:ML4parmoments}
\end{align}
\end{corollary}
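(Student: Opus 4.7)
The plan is to mirror the proof of Corollary \ref{cor:ML3parconv1} under the parameter substitution $\beta \to \beta+\theta$, $\gamma \to \gamma+\theta/\alpha$, which by design leaves $\beta-\alpha\gamma$ invariant and therefore preserves the convolution structure on which Theorem \ref{thm:ML4parfunction} rests. Starting from equation (\ref{eq:ML4parconv1}), I would substitute the explicit gamma density $dG(u\vert \gamma+\theta/\alpha,y) = y^{\gamma+\theta/\alpha}\rho_{\gamma+\theta/\alpha}(u)e^{-yu}\,du$, cancel $y^{\gamma+\theta/\alpha}$ on both sides, multiply through by $\Gamma(\beta+\theta)$, and relabel $y$ as $x$. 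The integrand in $u$ is then read off as the candidate density $p_{\alpha,\beta,\gamma,\theta}(u\vert t)$, establishing (\ref{eq:LStML4parconv1distribution}) together with the first representation (\ref{eq:ML4parconv1distribution}).

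The three alternative forms (\ref{eq:ML4parconv1distribution1}), (\ref{eq:ML4parconv1distribution2}) and (\ref{eq:ML4parconv1distributioninfsum}) then drop out by inserting, in turn, the matching representations (\ref{eq:rhoconvstable2}), (\ref{eq:rhoconvstable3}) and (\ref{eq:rhoconvstableinfsum1}) of Theorem \ref{thm:rhoconvstable} for the convolution $\{\rho_{\beta-\alpha\gamma}\star f_\alpha(\cdot\vert u)\}(t)$. Specialising to $t=1$ yields the five equivalent density expressions (\ref{eq:ML4pardistribution})--(\ref{eq:ML4pardistributioninfsum}) for the 4-parameter Mittag-Leffler distribution $P_{\alpha,\beta,\gamma,\theta}$; no fresh argument is required since $t$ enters only through the convolution factor.

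For the moments (\ref{eq:ML4parmoments}), I would differentiate the Prabhakar series (\ref{eq:ML3parseries}) for $E^{\gamma+\theta/\alpha}_{\alpha,\beta+\theta}(-x)$ term by term and evaluate at $x=0$; the $k$-th derivative isolates the $k$-th coefficient, giving
\begin{align*}
M_{\alpha,\beta,\gamma,\theta;k}
&= \Gamma(\beta+\theta)(-1)^k E^{\gamma+\theta/\alpha}_{\alpha,\beta+\theta}{}^{(k)}(0)
= \frac{\Gamma(\beta+\theta)\Gamma(\gamma+\theta/\alpha+k)}{\Gamma(\gamma+\theta/\alpha)\Gamma(\alpha k+\beta+\theta)}.
\end{align*}

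The step most deserving of care -- and the only point where I anticipate needing to check anything nontrivial -- is confirming that $p_{\alpha,\beta,\gamma,\theta}$ is a genuine probability density under the stated parameter range $-\theta < \alpha\gamma < \beta$. Nonnegativity is immediate from (\ref{eq:ML4parconv1distribution}): $\gamma+\theta/\alpha>0$ makes $\rho_{\gamma+\theta/\alpha}$ nonnegative, $\beta-\alpha\gamma>0$ makes $\rho_{\beta-\alpha\gamma}$ nonnegative, and $f_\alpha(\cdot\vert u)$ is itself a density; these same inequalities also legitimise the Fubini exchange carried out inside the proof of Theorem \ref{thm:ML4parfunction}. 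Normalisation follows by setting $x=0$ in (\ref{eq:LStML4pardistribution}) and noting that $E^\gamma_{\alpha,\beta}(0) = 1/\Gamma(\beta)$ from the $k=0$ term of (\ref{eq:ML3parseries}), so the right-hand side reduces to unity. Everything else is bookkeeping around the invariance of $\beta-\alpha\gamma$ under the reparametrisation.
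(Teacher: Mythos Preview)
Your proposal is correct and matches the paper's approach exactly: the paper in fact omits an explicit proof of this corollary, stating only that Theorem~\ref{thm:ML4parfunction} and its corollaries follow from the 3-parameter case under $\beta\to\beta+\theta$, $\gamma\to\gamma+\theta/\alpha$, and your write-up is precisely the 3-parameter proof (Corollary~\ref{cor:ML3parconv1}) carried through this substitution. Your additional verification of nonnegativity and normalisation is a welcome elaboration that the paper leaves implicit.
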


We note that, for general exponent $r>-\gamma-\theta/\alpha$, it can readily be shown that
\begin{align}
\int_0^\infty u^r \, dP_{\alpha,\beta,\gamma,\theta}(u) 
&= \frac{\Gamma(\beta+\theta)\Gamma(\gamma+\theta/\alpha+r)}{\Gamma(\gamma+\theta/\alpha)\Gamma(\alpha r+\beta+\theta)} 
\label{eq:ML4parmomentsgen}
\end{align}

The choice $\gamma=0\implies \beta>0, \theta>0$ gives  the  distribution  that  
Banderier~{\it  et al.}~\cite{Banderier} termed the beta-Mittag-Leffler distribution ${\rm BML(\alpha,\theta,\beta})$.
In this case,  the density~(\ref{eq:ML4pardistribution}) reduces to
\begin{align}
 p_{\alpha,\beta,\theta}(u) 
 &= \Gamma(\beta+\theta) \, \rho_{\theta/\alpha}(u) \{\rho_{\beta}\star f_\alpha(\cdot\vert u)\}(1)
 \label{eqBMLdensity1} \\
 &= \Gamma(\beta+\theta) \rho_{\theta/\alpha}(u) u^{\beta/\alpha}  
  \int_0^\infty  f_\alpha(1 \vert uv) \rho_{\beta/\alpha}(v-1)\mathbbm{1}_{[1,\infty)} dv 
 \label{eqBMLdensity2} \\
 &= \frac{\Gamma(\beta+\theta)}{\Gamma(\theta/\alpha)} \, u^{\theta/\alpha-1}  
      \sum_{k=0}^\infty \frac{(-u)^k}{k!} \frac{1}{\Gamma(\beta-\alpha k)}   
 \label{eqBMLdensityinfsum} 
\end{align}
The latter sum  reproduces~\cite[(25)]{Banderier}, obtained in that paper by the inverse Mellin transform.
The conceptual argument is   similar to the proof in Flajolet~{\it  et al.}~\cite[Proposition~16]{Flajolet}.

\begin{remark}
For $\beta=\gamma=1, \theta=0$, $(\ref{eq:ML4parmomentsgen})$ reduces to 
\begin{align}
\int_0^\infty u^r \, dP_\alpha(u) 
&= \frac{\Gamma(1+r)}{\Gamma(1+\alpha r)}    \qquad r >-1 
\label{eq:ML1parmomentsgen1}
\intertext{equivalently} 
\int_0^\infty u^{-r}\, dP_\alpha(u) 
&= \frac{\Gamma(1-r)}{\Gamma(1-\alpha r)}    \qquad r <1
\label{eq:ML1parmomentsgen2}
\end{align}
\end{remark}
We also note that
\begin{align}
\int_0^\infty u^r \, dP_\alpha(u) &\equiv \frac{1}{\alpha}  \int_0^\infty u^{r} f_\alpha(u^{-1/\alpha})u^{-1/\alpha-1}  \, du 
= \int_0^\infty u^{-\alpha r} \, f_\alpha(u) \, du
\label{eq:stablemomentsgen}
\end{align}
The equivalence of (\ref{eq:ML1parmomentsgen1}), (\ref{eq:ML1parmomentsgen2}),  (\ref{eq:stablemomentsgen})
 reproduces Shanbhag and Sreehari~\cite[Theorem~1]{Shanbhag}
 (also see Chaumont and Yor~\cite[p111(4.17.4)]{ChaumontYor})
from the perspective of Mittag-Leffler distributions.

\begin{corollary}[2-parameter Mittag-Leffler distribution]
\label{cor:ML2pardistribution}
For  $\theta>-\alpha\gamma$ and $\beta-\alpha\gamma=1-\alpha$, 
\begin{align}
dP_{\alpha,\beta,\gamma,\theta}(t)  &=   \Gamma(\beta+\theta) \,  \rho_{\gamma+\theta/\alpha}(t) \, dP_\alpha(t)
= \frac{\Gamma(\beta+\theta)}{\Gamma(\gamma+\theta/\alpha)} \, t^{\gamma+\theta/\alpha-1} \, dP_\alpha(t)
\label{eq:ML4parP1}  
\end{align} 
where $P_\alpha\equiv P_{\alpha,1,1,0}$ is the  Mittag-Leffler distribution. 
In particular, for $\beta=\gamma=1$, 
 $P_{\alpha,\theta}\equiv P_{\alpha,1,1,\theta}$ is the 2-parameter or generalized Mittag-Leffler distribution  ${\rm ML}(\alpha,\theta)$
 \begin{align}
d\,{\rm ML}(t\vert \alpha,\theta) \equiv dP_{\alpha,\theta}(t) 
&=   \Gamma(1+\theta) \,  
\rho_{1+\theta/\alpha}(t) \, dP_\alpha(t)  \nonumber \\
&= \frac{\Gamma(1+\theta)}{\Gamma(1+\theta/\alpha)} \, t^{\theta/\alpha} \, dP_\alpha(t)
\label{eq:ML2pardistribution}  
\end{align}
\end{corollary}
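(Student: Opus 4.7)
The plan is to start from the explicit density expression for the 4-parameter Mittag-Leffler distribution given in Corollary~\ref{cor:ML4parconv1}, namely~(\ref{eq:ML4pardistribution}):
\[
p_{\alpha,\beta,\gamma,\theta}(u) = \Gamma(\beta+\theta)\,\rho_{\gamma+\theta/\alpha}(u)\,\{\rho_{\beta-\alpha\gamma}\star f_\alpha(\cdot\vert u)\}(1),
\]
and then impose the constraint $\beta-\alpha\gamma=1-\alpha$ to reduce the convolution factor to a simple multiple of the Mittag-Leffler density $p_\alpha$.

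First, substituting $\beta-\alpha\gamma=1-\alpha$ makes the convolution inside the density become $\{\rho_{1-\alpha}\star f_\alpha(\cdot\vert u)\}(1)$. This is exactly the convolution that Property~\ref{prop:stable} evaluates in closed form: $\{\rho_{1-\alpha}\star f_\alpha(\cdot\vert u)\}(t)=tf_\alpha(t\vert u)/(u\alpha)$. Setting $t=1$ then yields $\{\rho_{1-\alpha}\star f_\alpha(\cdot\vert u)\}(1)=f_\alpha(1\vert u)/(u\alpha)$.

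Next, I would recognize that by~(\ref{eq:MLdistribution1}), the Mittag-Leffler density satisfies $p_\alpha(u) = f_\alpha(1\vert u)/(u\alpha)$. Hence the bracketed factor is literally $p_\alpha(u)$, and the density collapses to
\[
p_{\alpha,\beta,\gamma,\theta}(u) = \Gamma(\beta+\theta)\,\rho_{\gamma+\theta/\alpha}(u)\,p_\alpha(u),
\]
which, after writing $\rho_{\gamma+\theta/\alpha}(u)=u^{\gamma+\theta/\alpha-1}/\Gamma(\gamma+\theta/\alpha)$, gives~(\ref{eq:ML4parP1}). The special case $\beta=\gamma=1$ follows immediately since the constraint $\beta-\alpha\gamma=1-\alpha$ reduces to the identity $1-\alpha=1-\alpha$, leaving the free parameter $\theta>-\alpha$, and $\rho_{1+\theta/\alpha}(t)=t^{\theta/\alpha}/\Gamma(1+\theta/\alpha)$ gives~(\ref{eq:ML2pardistribution}).

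No step is really an obstacle here: the entire argument is a direct specialization of the previously established density formula~(\ref{eq:ML4pardistribution}) using Property~\ref{prop:stable}. The only thing to be careful about is bookkeeping of the parameter constraints, verifying that $\gamma+\theta/\alpha>0$ (which is exactly $\theta>-\alpha\gamma$) and that the combination $\beta-\alpha\gamma=1-\alpha$ is consistent with $0<\alpha<1$, so that all Gamma factors and the power density $\rho_{1-\alpha}$ are well-defined; these are already built into the hypotheses of the enclosing Theorem~\ref{thm:ML4parfunction}.
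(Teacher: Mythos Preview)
The proposal is correct and follows essentially the same approach as the paper: both start from the density formula~(\ref{eq:ML4pardistribution}), impose $\beta-\alpha\gamma=1-\alpha$, invoke Property~\ref{prop:stable} to collapse the convolution $\{\rho_{1-\alpha}\star f_\alpha(\cdot\vert u)\}(1)$ to $(u\alpha)^{-1}f_\alpha(1\vert u)$, and identify this with the Mittag-Leffler density $p_\alpha(u)$ via~(\ref{eq:MLdistribution1}). The only cosmetic difference is that the paper writes out the intermediate identity $f_\alpha(1\vert t)=f_\alpha(t^{-1/\alpha})t^{-1/\alpha}$ explicitly before recognizing $dP_\alpha(t)$, whereas you invoke~(\ref{eq:MLdistribution1}) directly.
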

\begin{proof}[Proof of Corollary   \ref{cor:ML2pardistribution}]
\label{proof:ML2pardistribution}
For $\beta-\alpha\gamma=1-\alpha$, $\{\rho_{1-\alpha}\star f_\alpha(\cdot\vert  t)\}(1) \equiv (t\alpha)^{-1} f_\alpha(1\vert t)$ 
by Proposition~\ref{prop:stable},  so that~$(\ref{eq:ML4pardistribution})$ simplifies to 
\begin{align*}
dP_{\alpha,\beta,\gamma,\theta}(t) 
&= \Gamma(\beta+\theta) \,  \rho_{\gamma+\theta/\alpha}(t)  \, (t\alpha)^{-1}  f_\alpha(1\vert  t) \, dt  \\
&= \frac{\Gamma(\beta+\theta)}{\Gamma(\gamma+\theta/\alpha)} \, t^{\gamma+\theta/\alpha-1}
     \left[\frac{1}{\alpha} f_\alpha(t^{-1/\alpha}) \,  t^{-1/\alpha-1}\right] dt \\
&= \frac{\Gamma(\beta+\theta)}{\Gamma(\gamma+\theta/\alpha)} \, t^{\gamma+\theta/\alpha-1}\, dP_\alpha(t)
\end{align*}
where we have recalled that $f_\alpha(1\vert t) \equiv f_\alpha(t^{-1/\alpha}) \, t^{-1/\alpha}$.
$\beta=\gamma=1$ gives ${\rm ML}(\alpha,\theta)\equiv P_{\alpha,\theta}$.
\end{proof}

 The well-studied  generalised Mittag-Leffler distribution  ${\rm ML}(\alpha,\theta)$ was  discussed in Section~\ref{sec:MLdistributions}. 
 Using the same notation, we also  refer to $P_{\alpha,\beta,\gamma,\theta}$ as  ${\rm ML}(\alpha,\beta,\gamma,\theta)$.
 The general  condition $\beta>\alpha\gamma$  means that 
 the convolution $\{\rho_{\beta-\alpha\gamma}\star  f_\alpha(\cdot\vert  u)\}(t)$ is not reducible to a form proportional  to $f_\alpha(t\vert  u)$
 as it is in the  case $\beta-\alpha\gamma=1-\alpha$ for ${\rm ML}(\alpha,\theta)$, including $P_\alpha\equiv {\rm ML}(\alpha,0)$.
 
\begin{corollary}[Composition]
\label{cor:ML4parconv1Composition}
For $0<\alpha<1$, $0<\sigma<1$ and $-\theta<\alpha\gamma<\beta$,  
the composition $E^{\gamma+\theta/\alpha}_{\alpha,\beta+\theta}(-\lambda x^\sigma)$ as a function of $x$ given  $\lambda>0$
is the Laplace transform of a mixture  density $w_{\sigma,\alpha}(t\vert \lambda, \gamma, \beta,\theta)$
\begin{align}
\Gamma(\beta+\theta)E^{\gamma+\theta/\alpha}_{\alpha,\beta+\theta}(-\lambda x^\sigma) 
&= \int_0^\infty  e^{-x t} \,  w_{\sigma,\alpha}(t\vert \lambda, \gamma, \beta,\theta) \,  dt \quad (x\ge0) 
\label{eq:LStML4parComposition} \\
\textrm{where}\quad w_{\sigma,\alpha}(t\vert \lambda, \gamma, \beta,\theta) 
&=  \int_0^\infty   f_\sigma(t\vert  \lambda u) \,dP_{\alpha,\beta,\gamma,\theta}(u) 
\label{eq:wML4par} \\
&= \phantom{-} \frac{\Gamma(\beta+\theta)}{\pi} \, {\rm Im} \int_0^\infty e^{-tu} \, 
E^{\gamma+\theta/\alpha}_{\alpha,\beta+\theta}(-\lambda e^{-i\pi\sigma}u^\sigma) \, du  
\label{eq:wML4parPollard} \\
 &= -\frac{1}{\pi} \sum_{k=0}^\infty (-\lambda)^k\sin(\pi\sigma k) \, M_{\alpha,\beta,\gamma,\theta;k} \frac{\Gamma(\sigma k+1)}{t^{\sigma k+1}} 
\label{eq:wML4parPollardseries} 
 \end{align} 
\end{corollary}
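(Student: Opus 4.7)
The plan is to proceed exactly as in the proof of Corollary~\ref{cor:ML3parconv1Composition}, applying Theorem~\ref{thm:CompositionLaplace} directly, but this time with the 4-parameter Mittag-Leffler distribution $P_{\alpha,\beta,\gamma,\theta}$ from Corollary~\ref{cor:ML4parconv1} playing the role of $F$. Specifically, I would set $F(t) \equiv P_{\alpha,\beta,\gamma,\theta}(t)$ and $\psi(x) \equiv \Gamma(\beta+\theta) E^{\gamma+\theta/\alpha}_{\alpha,\beta+\theta}(-x)$, noting that the parameter conditions $0<\alpha<1$ and $-\theta<\alpha\gamma<\beta$ are precisely those under which Corollary~\ref{cor:ML4parconv1} guarantees that $P_{\alpha,\beta,\gamma,\theta}$ is a genuine probability distribution with $\psi$ as its Laplace-Stieltjes transform.

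With these identifications, Theorem~\ref{thm:CompositionLaplace} immediately yields that $\psi(\lambda x^\sigma) = \Gamma(\beta+\theta) E^{\gamma+\theta/\alpha}_{\alpha,\beta+\theta}(-\lambda x^\sigma)$ is the Laplace transform of the mixture density
\begin{equation*}
w_{\sigma,\alpha}(t\vert \lambda, \gamma, \beta,\theta) \;=\; \int_0^\infty f_\sigma(t\vert \lambda u)\, dP_{\alpha,\beta,\gamma,\theta}(u),
\end{equation*}
establishing (\ref{eq:LStML4parComposition}) and (\ref{eq:wML4par}) in one step.

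For the remaining two representations, I would simply invoke the two earlier corollaries to Theorem~\ref{thm:CompositionLaplace}. Corollary~\ref{cor:CompositionLaplacePollard} applied to the present $F$ and $\psi$ gives the Pollard-type contour representation (\ref{eq:wML4parPollard}) verbatim. Corollary~\ref{cor:CompositionLaplaceInfseries} requires $F$ to possess finite moments of all order, which is verified by formula~(\ref{eq:ML4parmoments}) of Corollary~\ref{cor:ML4parconv1}; substituting those moments $M_{\alpha,\beta,\gamma,\theta;k}$ into the generic series formula~(\ref{eq:wPollardinfsum2}) produces (\ref{eq:wML4parPollardseries}).

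Because the statement is essentially a formal specialisation of Theorem~\ref{thm:CompositionLaplace} and its two corollaries, there is no genuine obstacle here --- the substantive content was discharged in Corollary~\ref{cor:ML4parconv1}, where the 4-parameter Mittag-Leffler distribution was constructed and its moments computed. The only point that deserves a brief sentence of care is confirming that the parameter range $-\theta<\alpha\gamma<\beta$ used here coincides with the range under which $P_{\alpha,\beta,\gamma,\theta}$ was established to be a probability distribution, so that the Fubini interchange implicit in Theorem~\ref{thm:CompositionLaplace} is justified for all $x\ge 0$ and $\lambda>0$.
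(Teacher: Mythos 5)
Your proposal is correct and follows exactly the route the paper intends: the paper omits this proof as a repetition of the proof of Corollary~\ref{cor:ML3parconv1Composition}, which is precisely your argument --- apply Theorem~\ref{thm:CompositionLaplace} with $F\equiv P_{\alpha,\beta,\gamma,\theta}$ and $\psi(x)\equiv\Gamma(\beta+\theta)E^{\gamma+\theta/\alpha}_{\alpha,\beta+\theta}(-x)$, then invoke Corollaries~\ref{cor:CompositionLaplacePollard} and~\ref{cor:CompositionLaplaceInfseries} with the moments~(\ref{eq:ML4parmoments}) for the remaining two representations.
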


 
\begin{corollary}
\label{cor:ML4parconv2}
For $0<\alpha<1$ and $-\theta<\alpha\gamma<\beta$,  the representation  $(\ref{eq:ML4parconv2})$
implies that 
$t^{\beta+\theta-1}E^{\gamma+\theta/\alpha}_{\alpha,\beta+\theta}(-y t^\alpha)$ as a function of $t$  given $y>0$ 
is the Laplace transform of a   density  $q_{\alpha,\beta,\gamma,\theta}(u\vert y)$ 
\begin{align}
y^{\gamma+\theta/\alpha} \, t^{\beta+\theta-1}E^{\gamma+\theta/\alpha}_{\alpha,\beta+\theta}(-y t^\alpha) 
&=  \int_0^\infty  f_\alpha(t\vert  u) \{\rho_{\beta/\alpha-\gamma} \star \rho_{\gamma+\theta/\alpha,y}\}(u) \, du  \nonumber \\
\implies 
t^{\beta+\theta-1}E^{\gamma+\theta/\alpha}_{\alpha,\beta+\theta}(-y t^\alpha)  
&=    \int_0^\infty    e^{-tu} \,  q_{\alpha,\beta,\gamma,\theta}(u\vert y)\, du  \quad (t\ge0)
\label{eq:ML4parconv2LSt} \\
{\rm where}\quad 
q_{\alpha,\beta,\gamma,\theta}(u\vert y)
 &=  \frac{y^{-\gamma-\theta/\alpha}}{\pi}\,{\rm Im}\left \{\widetilde\rho_{\beta/\alpha-\gamma}(e^{-i\pi\alpha}u^\alpha) \,  
 \widetilde\rho_{\gamma+\theta/\alpha,y}(e^{-i\pi\alpha}u^\alpha)\right\} 
 \label{eq:ML3parconv2distribution0} \\
 &= \frac{1}{\pi}\,{\rm Im} \frac{(e^{-i\pi}u)^{\alpha\gamma-\beta}}{(y+  e^{-i\pi\alpha}u^\alpha)^{\gamma+\theta/\alpha}}  
\label{eq:ML4parconv2distribution}
\end{align}
\end{corollary}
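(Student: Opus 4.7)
The plan is to run the same argument as in the proof of Corollary~\ref{cor:ML3parconv2}, with the only substantive change being the shape parameter $\gamma$ of the gamma density replaced by $\gamma+\theta/\alpha$. The key observation that makes this seamless is that the exponents $\beta-\alpha\gamma$ and $\beta/\alpha-\gamma$ appearing in $(\ref{eq:ML4parconv2})$ are invariant under the joint shift $\beta\to\beta+\theta$, $\gamma\to\gamma+\theta/\alpha$, so the only factor that actually changes in the integrand is the gamma density $\rho_{\gamma+\theta/\alpha,y}$ (and correspondingly its Laplace transform).

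First, I would start from (\ref{eq:ML4parconv2}) and apply Corollary~\ref{cor:CompositionLaplacePollard}, specifically equation (\ref{eq:CompositionLaplacePollardConv}), with $\nu=\beta-\alpha\gamma$ and $f=\rho_{\gamma+\theta/\alpha,y}$, whose Laplace transform is $\widetilde\rho_{\gamma+\theta/\alpha,y}(x)=y^{\gamma+\theta/\alpha}/(y+x)^{\gamma+\theta/\alpha}$. The Laplace transform of the power density is $\widetilde\rho_{\beta/\alpha-\gamma}(x)=x^{\gamma-\beta/\alpha}$. Substituting $x\to e^{-i\pi\alpha}u^\alpha$ as prescribed by (\ref{eq:CompositionLaplacePollardConv}), the integrand inside the imaginary part becomes
\begin{align*}
\widetilde\rho_{\beta/\alpha-\gamma}(e^{-i\pi\alpha}u^\alpha)\,\widetilde\rho_{\gamma+\theta/\alpha,y}(e^{-i\pi\alpha}u^\alpha)
 \;=\; \frac{y^{\gamma+\theta/\alpha}\,(e^{-i\pi\alpha}u^\alpha)^{\gamma-\beta/\alpha}}{(y+e^{-i\pi\alpha}u^\alpha)^{\gamma+\theta/\alpha}}.
\end{align*}
Using the identity $(e^{-i\pi\alpha}u^\alpha)^{\gamma-\beta/\alpha}=(e^{-i\pi}u)^{\alpha\gamma-\beta}$ under the standard branch, this simplifies to $y^{\gamma+\theta/\alpha}(e^{-i\pi}u)^{\alpha\gamma-\beta}/(y+e^{-i\pi\alpha}u^\alpha)^{\gamma+\theta/\alpha}$. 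Dividing both sides of the resulting identity by $y^{\gamma+\theta/\alpha}$ yields exactly the claimed form (\ref{eq:ML4parconv2distribution}) of $q_{\alpha,\beta,\gamma,\theta}(u\vert y)$, and the left-hand side becomes $t^{\beta+\theta-1}E^{\gamma+\theta/\alpha}_{\alpha,\beta+\theta}(-y t^\alpha)$ as required.

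I do not expect any genuine obstacle, since the argument is a direct parameter shift of the 3-parameter proof. The only items to check are standard: the hypothesis $\beta>\alpha\gamma$ guarantees $\nu=\beta-\alpha\gamma>0$ so that $\rho_{\beta-\alpha\gamma}$ is a bona fide density, while $\theta>-\alpha\gamma$ guarantees $\gamma+\theta/\alpha>0$ so that $\rho_{\gamma+\theta/\alpha,y}$ is a bona fide gamma density; together these justify applying Corollary~\ref{cor:CompositionLaplacePollard} (which in turn relies on Fubini for the underlying Pollard representation). As in Corollary~\ref{cor:ML3parconv2}, the merit of this route is that the complete monotonicity of $t^{\beta+\theta-1}E^{\gamma+\theta/\alpha}_{\alpha,\beta+\theta}(-y t^\alpha)$ is obtained without ever having to split it as a product of separate CM functions $t^{\beta+\theta-1}$ and $E^{\gamma+\theta/\alpha}_{\alpha,\beta+\theta}(-y t^\alpha)$, so no extraneous condition such as $\beta+\theta\le 1$ is needed.
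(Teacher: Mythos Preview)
Your proposal is correct and follows exactly the approach the paper intends: the paper omits the proof of Corollary~\ref{cor:ML4parconv2} precisely because it is the proof of Corollary~\ref{cor:ML3parconv2} with $\gamma\to\gamma+\theta/\alpha$ (and $\beta\to\beta+\theta$), and you have reproduced that argument faithfully. One small slip: when invoking~(\ref{eq:CompositionLaplacePollardConv}) the relevant index is $\nu=\beta/\alpha-\gamma$ (the index of the power density inside the convolution $\rho_{\beta/\alpha-\gamma}\star\rho_{\gamma+\theta/\alpha,y}$), not $\nu=\beta-\alpha\gamma$; your subsequent computation with $\widetilde\rho_{\beta/\alpha-\gamma}$ is nonetheless the correct one, so the slip is purely notational.
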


We summarise the results of  Theorem~\ref{thm:ML4parfunction} in Table~\ref{table:ML4parDensities}.

\section{An Alternative Representation}
\label{sec:alt}

We now turn to  a representation  of Mittag-Leffler distributions that builds on Corollary~\ref{cor:rhoconvstable},
involving the beta distribution.
By contrast to the foregoing discussion, this approach does not rely on  Mittag-Leffler functions as the point of departure.

\begin{theorem}
\label{thm:altML4pardistribution}
For $0<\alpha<1$ and $-\theta<\alpha\gamma<\beta$,  
the 4-parameter Mittag-Leffler distribution  $P_{\alpha,\beta,\gamma,\theta} \equiv {\rm ML}(\alpha,\beta,\gamma,\theta)$
has a density $p_{\alpha,\beta,\gamma,\theta}(t)$ given by
\begin{align}
p_{\alpha,\beta,\gamma,\theta}(t)
 &= \int_0^1 p_{\alpha,\beta+\theta}(t/u) \, {\rm beta}(u\vert \theta/\alpha+\gamma, \beta/\alpha-\gamma) \frac{du}{u}
 \label{eq:altML4pardistribution}
 \end{align}
 where  $p_{\alpha,\beta+\theta}(t)$ is the density of $P_{\alpha,\beta+\theta}\equiv {\rm ML}(\alpha,\beta+\theta)$.
 Equivalently, if $U$ and $V$ are independent random variables,  
 where $U$  has the beta distribution  ${\rm Beta}(\theta/\alpha+\gamma, \beta/\alpha-\gamma)$ and 
 $V$ has the 2-parameter Mittag-Leffler distribution ${\rm ML}(\alpha,\beta+\theta)$,
 then $T=VU$ has the 4-parameter  Mittag-Leffler distribution  ${\rm ML}(\alpha,\beta,\gamma,\theta)$
 with density~$(\ref{eq:altML4pardistribution})$.
\end{theorem}

\begin{proof}[Proof of Theorem  \ref{thm:altML4pardistribution}]
By (\ref{eq:ML4pardistribution1}) of Corollary~\ref{cor:ML4parconv1}, together  with  Corollary~\ref{cor:rhoconvstable}
\begin{align*}
 p_{\alpha,\beta,\gamma,\theta}(t) 
 &= \Gamma(\beta+\theta) \rho_{\gamma+\theta/\alpha}(t) t^{\beta/\alpha-\gamma}  
  \int_1^\infty  f_\alpha(1 \vert tu) \rho_{\beta/\alpha-\gamma}(u-1)  du \\
   (u\to1/u)\;
 &= \frac{\Gamma(\beta+\theta)}{\Gamma(\gamma+\theta/\alpha)} \frac{t^{(\beta+\theta)/\alpha-1}}{\Gamma(\beta/\alpha-\gamma)}
  \int_0^1  f_\alpha(1 \vert t/u) (1/u-1)^{\beta/\alpha-\gamma-1} \,  \frac{du}{u^2} \\
  &= \frac{\Gamma(\beta+\theta)}{\Gamma(\gamma+\theta/\alpha)} \frac{1}{\Gamma(\beta/\alpha-\gamma)}  \\
  &\qquad \times \int_0^1 (t/u)^{(\beta+\theta)/\alpha-1} f_\alpha(1 \vert t/u) \,  u^{\theta/\alpha+\gamma-1}(1-u)^{\beta/\alpha-\gamma-1} \,  \frac{du}{u} \\
   &= \frac{\Gamma(\beta+\theta)}{\Gamma(\gamma+\theta/\alpha)} \frac{1}{\Gamma(\beta/\alpha-\gamma)}
    \frac{\alpha\Gamma(1+(\beta+\theta)/\alpha)}{\Gamma(1+\beta+\theta)} 
    \frac{\Gamma(\gamma+\theta/\alpha)\Gamma(\beta/\alpha-\gamma)}{\Gamma((\beta+\theta)/\alpha)} \\ 
  &\qquad  \times  \int_0^1 p_{\alpha,\beta+\theta}(t/u) \, {\rm beta}(u\vert \theta/\alpha+\gamma, \beta/\alpha-\gamma) \frac{du}{u} \\
  &= \int_0^1 p_{\alpha,\beta+\theta}(t/u) \, {\rm beta}(u\vert \theta/\alpha+\gamma, \beta/\alpha-\gamma) \frac{du}{u}
 \end{align*}
 with the leading constant evaluating to 1.
 $p_{\alpha,\beta+\theta}(t)$ is read from the definition of $p_{\alpha,\theta}(t)$ in Corollary~\ref{cor:ML2pardistribution}.
 By~(\ref{eq:betaVdensity}),  this is the density of $T=VU$ as described in Theorem~\ref{thm:altML4pardistribution}.
\end{proof}

The particular  case $\beta=\gamma=1$ reproduces  ${\rm ML}(\alpha,\theta)\equiv  {\rm ML}(\alpha,1,1,\theta)$.
This defines  the Mittag-Leffler Markov Chain (MLMC) with  recursion of the form  $T_{n-1}=T_nU_n$
(\cite{Goldschmidt22, GoldschmidtHaas, HoJamesLau, RembartWinkel2018, RembartWinkel2023, Senizergues}).
MLMC  features in models of the growth of random trees.

The representation of Theorem~\ref{thm:altML4pardistribution} is shown in Table~\ref{table:ML4parBetaML}
with various choices of parameters.
In keeping with the rest of the tables,  a  column of corresponding Laplace transforms is reproduced.

Formally, this completes our  narrative  on Mittag-Leffler densities,  mixtures  involving such densities and the stable density,
as well as  the  Laplace transforms of all such objects, featuring Mittag-Leffler functions.

For completenesses, we make reference  to other densities that have arisen in the course of discussion, including  
the generalised positive Linnik density. 
The results summarised here are  well-known but not typically  derived from the explicit convolution perspective  of this paper.

\section{Generalised Positive  Linnik Distribution}
\label{sec:GML}

Table~\ref{table:OtherDensities} presents  densities that have arisen in the course of discussion
(Corollaries~\ref{cor:MLconv2}, \ref{cor:ML3parconv2} and \ref{cor:ML4parconv2})
that are not 
explicitly   in the family  of Mittag-Leffler densities but whose Laplace transforms nonetheless  involve Mittag-Leffler functions.
Unlike all previous instances, these densities are not normalised (with the exception of the first one).

The case $\beta-\alpha\gamma=0$ in Corollary~\ref{cor:ML4parconv2} gives the density whose Laplace transform is the density of the
generalised positive Linnik distribution 
${\rm PL}(\alpha, \gamma+\theta/\alpha,\lambda)$.
We have already encountered the mixture representation  of the latter density
as the special  case $\beta=\alpha\gamma$ in  Theorem~\ref{thm:ML4parfunction} (with $\lambda\equiv y$).
The generalised positive Linnik distribution  ${\rm PL}(\alpha, \gamma+\theta/\alpha,\lambda)$   is summarised   in Table~\ref{table:Linnik}. 

\section{Discussion}
\label{discussion}

The 4-parameter Mittag-Leffler distribution  $P_{\alpha,\beta,\gamma,\theta} \equiv {\rm ML}(\alpha,\beta,\gamma,\theta)$ is the general case,
subsuming all other variants in the literature  as  special cases:
\begin{enumerate}
\item Mittag-Leffler Distribution: $P_\alpha \equiv {\rm ML}(\alpha,1,1,0)$ 
\cite{Feller2, PollardML}
\item 3-parameter Mittag-Leffler Distribution: ${\rm ML}(\alpha,\beta,\gamma) \equiv{\rm ML}(\alpha,\beta,\gamma,0)$ 
\cite{BeghinFoxH, Flajolet, Gorska, Mohle, Tomovski}
\item Beta-Mittag-Leffler Distribution: ${\rm BML(\alpha,\theta,\beta}) \equiv{\rm ML}(\alpha,\beta,0,\theta>0)$  
\cite{Banderier, Flajolet, Mohle, Goldschmidt22}
\item 2-parameter/Generalized Mittag-Leffler Distribution: 
${\rm ML}(\alpha,\theta) \equiv{\rm ML}(\alpha,1,1,\theta>-\alpha)$, equivalently  ${\rm ML}(\alpha,0,0,\theta>0)$ 
\cite{Banderier, BloemReddyOrbanz, Goldschmidt22, GoldschmidtHaas, HoJamesLau, James_Lamperti, James2015, 
JansonProbSurv, Pitman_CSP}
\item Mittag-Leffler Markov Chain defined by    $T_{n-1}=T_nU_n$ where $U$ is a beta variable and $T$ is a 2-parameter/Generalized Mittag-Leffler
variable \cite{Goldschmidt22, GoldschmidtHaas, HoJamesLau, RembartWinkel2018, RembartWinkel2023, Senizergues}

\end{enumerate}

Logically, we might have constructed the general 4-parameter case from the outset. 
We opted instead to start from the single parameter case  for clarity of exposition, despite the inevitable duplication of earlier  results 
that emerge as special cases of later more general formulation.

The primary point though is one of methodology.
All distributions arise from convolutions  and mixtures of gamma and stable densities as the basic construct.
The Laplace-Stietljes transforms of all distributions  thus constructed are  Mittag-Leffler functions
and their compositions with a  Bernstein function of specific form.
There has been no need for explicit complex analysis.
It has sufficed to refer to the representation~(\ref{eq:Pollardinfsum2}) of the stable density,
which is an outcome of  complex analytic inversion  due to Pollard~\cite{Pollard}.

There has also  been no need to appeal to special functions.
For completeness though, we comment   on  how special functions  might be seen to arise from convolutions of gamma densities.

\subsection{Convolutions and Special Functions}
\label{specialfunctions}


We have seen that the  Mittag-Leffler distribution involves the convolution $\{\rho_{\beta-\alpha\gamma}\star  f_\alpha(\cdot\vert  t)\}(1)$. 
By Theorem~\ref{thm:rhoconvstable},  this may be  expressed as 
\begin{align*}
\{\rho_{\beta-\alpha\gamma}\star  f_\alpha(\cdot\vert  t)\}(1)
 &=  t^{\beta/\alpha-\gamma}   \int_0^\infty f_\alpha(1\vert t u) \rho_{\beta/\alpha-\gamma}(u-1) \mathbbm{1}_{[1,\infty)} \, du  \\
  &=  \sum_{k=0}^\infty \frac{(-t)^k}{k!} \frac{1}{\Gamma(\beta-\alpha\gamma-\alpha k)}   
\end{align*}

The  representation~(\ref{eq:Pollardinfsum2}) of the stable density leads to an alternative infinite sum representation 
\begin{align}
 \{\rho_{\beta-\alpha\gamma}\star  f_\alpha(\cdot\vert  t)\}(1)  
 &=  -\frac{1}{\pi}
   \sum_{k=0}^\infty \frac{(-t)^k}{k!} \sin \pi\alpha k  \int_0^\infty \{\rho_{\beta-\alpha\gamma}\star \rho_{1,u}\}(1) \, u^{\alpha k-1} \, du
 \label{eq:rhoconvstableAlt}  
\end{align}

The observation of interest here is that  $\{\rho_{\beta/\alpha-\gamma} \star \rho_{1,u }\}(1) $ is a  
convolution of 2 gamma densities with different scale factors $0$ and $u$.
More generally, the gamma distribution has the well-known  property that it is closed under convolution of densities sharing a common  scale parameter,
{\it i.e.}\ $\{\rho_{\nu,\lambda}\star \rho_{\mu,\lambda}\}(t) = \rho_{\nu+\mu,\lambda}(t)$
with scale  parameter $\lambda$ throughout.
The case of different  scale parameters   is not as simple. 
In particular 
 \begin{align}
 \{\rho_{\nu}\star \rho_{\mu,\lambda}\}(1)
 &= \int_0^1 \rho_{\nu}(1-u) \rho_{\mu,\lambda}(u) du \\
 &=   \frac{\lambda^\mu}{\Gamma(\nu)\Gamma(\mu)} \int_0^1 (1-u )^{\nu-1} \,  u^{\mu-1}e^{-\lambda  u} \, du  \\
 &=   \frac{\lambda^\mu}{\Gamma(\mu+\nu)} \int_0^1{\rm beta}(u\vert \mu,\nu)\, e^{-\lambda  u} \, du
  \label{eq:gammaconv} 
\end{align}
We recognise the integral in~(\ref{eq:gammaconv}) as the Laplace transform   of the beta density, also denoted by 
$M(\mu,\mu+\nu,-\lambda)$, the confluent hypergeometric function (Abramowitz and Stegun~\cite[13.2.1]{AbrSteg}).
Hence, with $\nu=\beta-\alpha\gamma$ and $\mu=1$, (\ref{eq:rhoconvstableAlt}) may be written as 
\begin{align}
 \{\rho_{\beta-\alpha\gamma}\star  f_\alpha(\cdot\vert  t)\}(1)  
 &=  -\frac{1}{\pi}
   \sum_{k=0}^\infty \frac{(-t)^k \sin \pi\alpha k}{k!\Gamma(\beta-\alpha\gamma+1)}   \int_0^\infty u^{\alpha k} \, M(1,\beta-\alpha\gamma+1,-u) \, du
 \label{eq:rhoconvstableAlt1}  
\end{align}
The integral may be looked upon as a Mellin transform of $u M(1,\beta-\alpha\gamma+1,-u)$ defined by 
\begin{align*}
\int_0^\infty u^{\alpha k-1} \, \{u M(1,\beta-\alpha\gamma+1,-u)\} \, du
\end{align*}
While this  illustrates the link between Mittag-Leffler distributions  and  special functions and Mellin transforms,
(\ref{eq:rhoconvstableAlt1})  does not in our view provide additional conceptual insight compared to the expressions
for the convolution $\{\rho_{\beta-\alpha\gamma}\star  f_\alpha(\cdot\vert  t)\}(1)$
given by  Theorem~\ref{thm:rhoconvstable}.
  


 \section{Conclusion}
 \label{sec:conclusion}

We have  used the   convolution and mixing  of gamma and stable densities to construct  densities of higher level distributions.
The convolution between the power density and  the stable density  has been the centrepiece of the discussion.
We  applied the methodology to the construction of Mittag-Leffler distributions, with a 4-parameter instance as the general case subsuming
published instances  as special cases.
The Laplace transforms of the Mittag-Leffler densities are Mittag Leffler functions.
We have shown that the compositions  of the latter with a power law Bernstein function can be constructed  from  corresponding  mixture densities.

\appendix

\section{Mittag-Leffler Densities from Stable Random Variables}
\label{sec:MLrv}

The approach of this paper has been the direct construction of Mittag-Leffler densities  based on convolution and mixing  of gamma and stable densities.
The common approach in the probabilistic  literature is to work with products and powers of stable random variables
to construct Mittag-Leffler densities, as described  in this appendix.

\begin{proposition}
\label{prop:MLfromstablerv}
Let  $X$  be an $\alpha$-stable random variable  with density $f_\alpha(x)\equiv f_\alpha(x\vert 1)$  defined by~$(\ref{eq:stable})$.
Then $T=X^{-\alpha}$ has the Mittag-Leffler distribution  $P_\alpha(t)$ given by
\begin{align}
 dP_\alpha(t) &= \frac{1}{\alpha}  f_\alpha(1 \vert  t)  \,  t^{-1} \, dt
  \equiv    \frac{1}{\alpha}  f_\alpha(t^{-1/\alpha}) \,  t^{-1/\alpha-1} \, dt
  \end{align}
\end{proposition}
\begin{remark}
Proposition~\ref{prop:MLfromstablerv} is well-known.
\end{remark}

\begin{proof}[Proof of Proposition \ref{prop:MLfromstablerv}]
The joint density of $(T=t, X=x)$ is 
$\Pr(t,x) = \Pr(t\vert x)\Pr(x)$ where   $\Pr(x)= f_\alpha(x)$ and $\Pr(t\vert x) = \delta(t-x^{-\alpha})$. 
The marginal density $p_\alpha(t)\equiv \Pr(t)$ of $T=t$  is  
\begin{align*}
p_\alpha(t) &=  \int_0^\infty \delta(t-x^{-\alpha}) f_\alpha(x) \,dx \\
(x\to u^{-1/\alpha}) \;
     &= \frac{1}{\alpha} \int_0^\infty \delta(t-u) f_\alpha(u^{-1/\alpha})  u^{-1/\alpha-1}\,du \\
     &= \frac{1}{\alpha}  f_\alpha(t^{-1/\alpha})\,  t^{-1/\alpha-1} 
\end{align*}
which is the density of the Mittag-Leffler distribution $P_\alpha(t)$.
\end{proof}

\begin{proposition}
\label{prop:stablervratio}
Let $X, Y$ be independent random variables   with   densities   $f_\alpha(x\vert \lambda)$ \\ and $f_\alpha(y)\equiv f_\alpha(y\vert 1)$ respectively.
Then $T=X/Y$ has density $w_{\alpha,\alpha}(t\vert \lambda)$ given by
\begin{align}
 w_{\alpha,\alpha}(t\vert \lambda)  &= \int_0^\infty f_\alpha(t \vert \lambda  u) dP_\alpha(u)
  =  \frac{\sin\pi\alpha}{\pi}\, \frac{\lambda\, t^{\alpha-1}} {\lambda^2+2\lambda \, t^\alpha \cos\pi\alpha+t^{2\alpha}} 
 \label{eq:stablervratio}
 \end{align}
\end{proposition}
\begin{remark}
The second form of Proposition~\ref{prop:stablervratio} for $\lambda =1$  $(X,Y \, {\it iid})$ is well-known.
\end{remark}

\begin{proof}[Proof of Proposition \ref{prop:stablervratio}]
Since $X,Y$ are independent, the joint density of $(T=t, X=x, Y=y)$ is 
$\Pr(t,x,y) = \Pr(t\vert x,y)\Pr(x)\Pr(y)$ where   $\Pr(x)= f_\alpha(x\vert \lambda)$ and $\Pr(t\vert x,y) = \delta(t-x/y)$. 
The marginal density $ w_{\alpha,\alpha}(t\vert \lambda) \equiv \Pr(t)$ of $T=t$  is 
\begin{align*}
 w_{\alpha,\alpha}(t\vert \lambda)  
 &=  \int_0^\infty  \int_0^\infty \delta(t-x/y) f_\alpha(x\vert \lambda)  f_\alpha(y)\,dx dy \\
(x\to yu) \;
 &=   \int_0^\infty \int_0^\infty \delta(t-u) f_\alpha(yu \vert \lambda)  f_\alpha(y) y \,du dy \\
 &=   \int_0^\infty  f_\alpha(yt \vert \lambda) y   f_\alpha(y)\, dy \\
(y\to u^{-1/\alpha}) \;
 &=   \int_0^\infty  f_\alpha(u^{-1/\alpha}t \vert \lambda) u^{-1/\alpha}  \left[\frac{1}{\alpha}f_\alpha(u^{-1/\alpha})u^{-1/\alpha-1} \right]\, du \\
 &=   \int_0^\infty  f_\alpha(t \vert \lambda u)  \, p_\alpha(u) \, du = \int_0^\infty  f_\alpha(t \vert \lambda u)  \, dP_\alpha(u) 
 \end{align*}
 The second form of $w_{\alpha,\alpha}(t\vert \lambda)$ was demonstrated in the proof of Corollary~\ref{cor:MLconv1Composition}.
 \end{proof}

 \begin{proposition}
\label{prop:ChamYorA}
Let $\lambda=1$ in Proposition~\ref{prop:stablervratio} and let $Z=X/Y$, equivalently, $Z=Y/X$ since $X, Y$ now have  the same density.
Then  $T=Z^\alpha$ or $T=Z^{-\alpha}$ has density 
\begin{align}
\Pr(t) &=  \frac{1}{\alpha t} w_{\alpha,\alpha}(1 \vert t) =  \frac{1}{\alpha t}  \int_0^\infty f_\alpha(1 \vert t  u) dP_\alpha(u)
  =  \frac{\sin\pi\alpha}{\pi\alpha}\, \frac{1} {t^2+2t \,  \cos\pi\alpha+1}
 \label{eq:ChamYorA}
 \end{align}
\end{proposition}
\begin{remark}
The second form of Proposition~\ref{prop:ChamYorA}  is in  Chaumont and Yor~\cite[p116(4.21.3)]{ChaumontYor}. 
\end{remark}

\begin{proof}[Proof of Proposition \ref{prop:ChamYorA}]
We prove the case $T=Z^{-\alpha}$. 
Setting $w_{\alpha,\alpha}(z) \equiv w_{\alpha,\alpha}(z\vert 1)$
 \begin{align*}
\Pr(t)  &=  \ \int_0^\infty \delta(t-z^{-\alpha}) w_{\alpha,\alpha}(z)  \, dz   \\
(z\to u^{-1/\alpha}) \;
 &=  \frac{1}{\alpha}\int_0^\infty \delta(t-u) w_{\alpha,\alpha}(u^{-1/\alpha}) u^{-1/\alpha-1} \,du \\
 &=  \frac{1}{\alpha}  w_{\alpha,\alpha}(t^{-1/\alpha}) t^{-1/\alpha-1} \\
 &=    \frac{1}{\alpha t}  \int_0^\infty  f_\alpha(t^{-1/\alpha} \vert  u) t^{-1/\alpha}   \, dP_\alpha(u) \\
 &=    \frac{1}{\alpha t}  \int_0^\infty  f_\alpha(1 \vert t u)   \, dP_\alpha(u) \\
 &=  \frac{1}{\alpha t} w_{\alpha,\alpha}(1 \vert t)
  =  \frac{\sin\pi\alpha}{\pi\alpha}\, \frac{1} {t^2+2t \,  \cos\pi\alpha+1}
 \end{align*}
 Starting with $T=Z^{\alpha}$ gives  $w_{\alpha,\alpha}(1 \vert t^{-1})/\alpha t$, which is  the same  as $w_{\alpha,\alpha}(1 \vert t)/\alpha t$.
 \end{proof}


 \bibliography{MittagLeffler.bib}{}
\bibliographystyle{plain} 

\newpage

%
\begin{table}[h!]
{\normalsize
\begin{equation*}
\begin{array}{| c | c  | c | } 
\cline{2-3} 
\multicolumn{1}{c|}{} & 
\multicolumn{1}{ c |}{\cellcolor{lightgray} \textrm{Probability Density}} & 
 \multicolumn{1}{c |} {\cellcolor{lightgray}
\begin{array}{c}
\textrm{Laplace Transform} \\
\textrm{of density} 
\end{array}} 
\TBstrut\\
\hline 
\hline 
& \multicolumn{1}{l |} {\textrm{\footnotesize Corollary~\ref{cor:MLconv1}: Mittag-Leffler density }} & 
 \multicolumn{1}{l |} {\textrm{\footnotesize LT of ML density}} \\
& \multicolumn{1}{l |} {\textrm{\footnotesize Parameters:  $0<\alpha<1$ \, }} & \\
 \textrm{\scriptsize 1}  & 
 \begin{aligned}
 p_{\alpha}(t) &=  \{\rho_{1-\alpha}\star  f_\alpha(\cdot\vert  t)\}(1) \Tstrut\\
 &= \frac{1}{\alpha}  f_\alpha(1 \vert  t)  \,  t^{-1} \equiv \frac{1}{\alpha}  f_\alpha(t^{-1/\alpha}) \,  t^{-1/\alpha-1} \Tstrut\\
 &=  \frac{1}{\pi\alpha t}\,{\rm Im}  \int_0^\infty e^{-u} \, e^{-te^{-i\pi\alpha}u^\alpha}  \, du  \Tstrut\\
 &= \frac{1}{\pi\alpha}  \sum_{k=0}^\infty \frac{(-t)^{k-1}}{k!} \sin(\pi\alpha k) \, \Gamma(\alpha k+1)  \TBstrut\\
 \end{aligned} 
 &
E_{\alpha}(-x) \TBstrut\\
\hline 
\hline 
&  \multicolumn{1}{l |} {\textrm{\footnotesize Corollary~\ref{cor:MLconv1Composition}: $\{\alpha, \sigma\}$ Mixing}} &  
 \multicolumn{1}{l |} {\textrm{\footnotesize Composition with $\lambda x^\sigma$ }} \\
& \multicolumn{1}{l |} {\textrm{\footnotesize Parameters: $0<\alpha<1, 0<\sigma<1; \lambda>0$}}  &   \\
\textrm{\scriptsize 2} & 
 \multicolumn{1}{c |} {
\begin{array}{c}
w_{\sigma,\alpha}(t \vert \lambda) = 
\displaystyle \int_0^\infty  f_\sigma(t\vert \lambda u) \, dP_{\alpha}(u) \TBstrut\\
\begin{aligned}
&= \phantom{-} \dfrac{1}{\pi} {\rm Im} \displaystyle\int_0^\infty e^{-tu} \, E_\alpha(-\lambda e^{-i\pi\sigma}u^\sigma) \, du   \Bstrut\\
 &= -\dfrac{1}{\pi}\
\displaystyle \sum_{k=0}^\infty (-\lambda)^k\sin(\pi\sigma k) \, \frac{\Gamma(\sigma k+1)}{\Gamma(\alpha k+1)} \, t^{-\sigma k-1}  \Bstrut\\
 \end{aligned}
\end{array}} &
E_{\alpha}(-\lambda x^\sigma)  
\TBstrut\\
\hline 
& \multicolumn{1}{l |} {\textrm{\footnotesize Corollary~\ref{cor:MLconv1Composition}: $\{\alpha,\alpha\}$ Mixing}} & 
 \multicolumn{1}{l |} {\textrm{\footnotesize Composition with $\lambda x^\alpha$ }} \\
& \multicolumn{1}{l |} {\textrm{\footnotesize Parameters: $0<\alpha<1; \lambda>0$}}  &   \\
\textrm{\scriptsize 2a}  & 
 \multicolumn{1}{c |} {
\begin{array}{c}
w_{\alpha,\alpha}(t \vert \lambda) = 
\displaystyle \int_0^\infty  f_\alpha(t\vert \lambda u) \, dP_\alpha(u)  \TBstrut\\
  = \dfrac{\sin\pi\alpha}{\pi}\, \dfrac{\lambda\, t^{\alpha-1}} {\lambda^2+2\lambda  t^\alpha \cos\pi\alpha+t^{2\alpha}}   \TBstrut\\
 \end{array}} &
E_{\alpha}(-\lambda x^\alpha) 
\TBstrut\\ 
 \hline 
\multicolumn{3}{l}{\footnotesize \textrm{Notes:}} \\
\multicolumn{3}{l}{\textrm{\footnotesize  $f_\alpha(t\vert u)$  is the  stable density with Laplace transform $e^{-ux^\alpha}$ \, $(u>0)$ }} \\
\multicolumn{3}{l}{ \textrm{\footnotesize  $p_{\alpha}(t)$  is the density of the Mittag-Leffler distribution $P_{\alpha}(t)$  } } \\
\multicolumn{3}{l}{\textrm{\footnotesize  $\rho_{\gamma}(t)$  is the  unnormalised power density with Laplace transform 
$\widetilde\rho_{\gamma}(x)=x^{-\gamma}$ } } \\
\end{array} \\
\end{equation*}}
\caption{Theorem~\ref{thm:MLfunction}: Mittag-Leffler densities and their Laplace transforms}
\label{table:MLDensities}
\end{table}

\newpage

%
\begin{table}[h!]
{\normalsize
\begin{equation*}
\begin{array}{| c | c  | c | } 
\cline{2-3} 
\multicolumn{1}{c|}{} & 
\multicolumn{1}{ c |}{\cellcolor{lightgray} \textrm{Probability Density}} & 
 \multicolumn{1}{c |} {\cellcolor{lightgray}
\begin{array}{c}
\textrm{Laplace Transform} \\
\textrm{of density} 
\end{array}} 
\TBstrut\\
\hline 
\hline 
& \multicolumn{1}{l |} {\textrm{\footnotesize Corollary~\ref{cor:ML3parconv1}: 3-parameter Mittag-Leffler density }} & 
 \multicolumn{1}{l |} {\textrm{\footnotesize LT of 3-par ML density}} \\
& \multicolumn{1}{l |} {\textrm{\footnotesize $0<\alpha<1$, $0<\alpha\gamma<\beta$ }} & \\
 \textrm{\scriptsize 1}  & 
 \begin{aligned}
 p_{\alpha,\beta,\gamma}(t) &=  \Gamma(\beta) \, \rho_\gamma(t) \{\rho_{\beta-\alpha\gamma}\star  f_\alpha(\cdot\vert  t)\}(1)  \\
 &= \Gamma(\beta) \rho_\gamma(t) t^{\beta/\alpha-\gamma}  \;\times\\
 & \qquad   \displaystyle \int_0^\infty  f_\alpha(1 \vert tu) \rho_{\beta/\alpha-\gamma}(u-1)\mathbbm{1}_{[1,\infty)}du  \\ 
 &= \frac{\Gamma(\beta)  \rho_\gamma(t)}{\pi}{\rm Im}  \int_0^\infty e^{-u} \, (e^{-i\pi}u)^{\alpha\gamma-\beta} \, e^{-t (e^{-i\pi}u)^\alpha} du  \\
 &= \frac{\Gamma(\beta)}{\Gamma(\gamma)} \,  
      \sum_{k=0}^\infty \frac{(-1)^k}{k!} \frac{ t^{\gamma+k-1}}{\Gamma(\beta-\alpha\gamma-\alpha k)}   \Bstrut\\
 \end{aligned}  &
\Gamma(\beta) E^\gamma_{\alpha,\beta}(-x) \TBstrut\\
\hline 
\hline 
&  \multicolumn{1}{l |} {\textrm{\footnotesize Corollary~\ref{cor:ML3parconv1Composition}: 
$\{\alpha, \sigma\}$ Mixing}} &  
 \multicolumn{1}{l |} {\textrm{\footnotesize Composition with $\lambda x^\sigma$ }} \\
 & \multicolumn{1}{l |} {\textrm{\footnotesize $0<\alpha<1, 0<\sigma<1, 0<\alpha\gamma<\beta; \,\lambda>0$}} & \\
\textrm{\scriptsize 2} & 
 \multicolumn{1}{c |} {
\begin{array}{c}
\displaystyle \int_0^\infty  f_\sigma(t\vert \lambda u) \, dP_{\alpha,\beta,\gamma}(u) \TBstrut\\
\begin{aligned}
&= \phantom{-} \dfrac{\Gamma(\beta)}{\pi} {\rm Im} \displaystyle\int_0^\infty e^{-tu} \, 
      E^\gamma_{\alpha,\beta}(-\lambda e^{-i\pi\sigma}u^\sigma) \, du   \Bstrut\\
 &= -\dfrac{1}{\pi}\
\displaystyle \sum_{k=0}^\infty (-\lambda)^k\sin(\pi\sigma k) \, M_{\alpha,\beta,\gamma;k} \frac{\Gamma(\sigma k+1)}{t^{\sigma k+1}}  \TBstrut\\
 \end{aligned}
\end{array}} &
\Gamma(\beta) E^\gamma_{\alpha,\beta}(-\lambda x^\sigma)  
\TBstrut\\
\hline 
&  \multicolumn{1}{l |} {\textrm{\footnotesize Corollary~\ref{cor:ML3parconv1Composition}: $\{\alpha, \alpha\}$ Mixing (as above, $\sigma=\alpha$)}} &  
 \multicolumn{1}{l |} {\textrm{\footnotesize Composition with $\lambda x^\alpha$ }} \\
 & \multicolumn{1}{l |} {\textrm{\footnotesize $0<\alpha<1, 0<\alpha\gamma<\beta; \,\lambda>0$}} & \\
\textrm{\scriptsize 2a} & 
\displaystyle \int_0^\infty  f_\alpha(t\vert \lambda u) \, dP_{\alpha,\beta,\gamma}(u) &
\Gamma(\beta) E^\gamma_{\alpha,\beta}(-\lambda x^\alpha)  \TBstrut\\
\hline 
\multicolumn{3}{l}{\textrm{\footnotesize  Notes: ($f_\alpha(t\vert u), \rho_{\gamma}(t)$  are as in Table~\ref{table:MLDensities})}} \\
\multicolumn{3}{l}{ \textrm{\footnotesize  $p_{\alpha,\beta,\gamma}(t)$  is the density of the 3-parameter 
        Mittag-Leffler distribution $P_{\alpha,\beta,\gamma}(t)$  } } \\
 \multicolumn{3}{l}{\textrm{\footnotesize $M_{\alpha,\beta,\gamma;k}
  = \dfrac{\Gamma(\beta)\Gamma(\gamma+k)}{\Gamma(\gamma)\,\Gamma(\beta+\alpha k)}$
 is the  $k^{\rm th}$ moment of $P_{\alpha,\beta,\gamma}(t)$} 
}  \Tstrut\\
\end{array} \\
\end{equation*}}
\caption{Theorem~\ref{thm:ML3parfunction}: 3-parameter Mittag-Leffler densities and their Laplace transforms}
\label{table:ML3parDensities}
\end{table}

\newpage
%
\begin{table}[h!]
{\normalsize
\begin{equation*}
\begin{array}{| c | c  | c | } 
\cline{2-3} 
\multicolumn{1}{c|}{} & 
\multicolumn{1}{ c |}{\textrm{Probability Density}} & 
 \multicolumn{1}{c |} {
\begin{array}{c}
\textrm{Laplace Transform} \\
\textrm{of density} 
\end{array}} 
\TBstrut\\
\hline 
\hline 
& \multicolumn{1}{l |} {\textrm{\footnotesize Corollary~\ref{cor:ML4parconv1}: 4-parameter Mittag-Leffler density }} & 
 \multicolumn{1}{l |} {\textrm{\footnotesize LT of 4-par ML density}} \\
& \multicolumn{1}{l |} {\textrm{\footnotesize $0<\alpha<1$, $-\theta<\alpha\gamma<\beta$ }} & \\
 \textrm{\scriptsize 1}  & 
\begin{aligned}
 p_{\alpha,\beta,\gamma,\theta}(t) &=  \Gamma(\beta+\theta) \, \rho_{\gamma+\theta/\alpha}(t) \{\rho_{\beta-\alpha\gamma}\star  f_\alpha(\cdot\vert  t)\}(1) \\
 &= \Gamma(\beta+\theta) \rho_{\gamma+\theta/\alpha}(t) t^{\beta/\alpha-\gamma} \; \times \\
 & \qquad  \displaystyle \int_0^\infty  f_\alpha(1 \vert tu) \rho_{\beta/\alpha-\gamma}(u-1)\mathbbm{1}_{[1,\infty)}du  \\ 
 &= \Gamma(\beta+\theta) \rho_{\gamma+\theta/\alpha}(t) \;\times \\
           &  \qquad \frac{1}{\pi} {\rm Im}  \int_0^\infty e^{-u}  (e^{-i\pi}u)^{\alpha\gamma-\beta} e^{-t (e^{-i\pi}u)^\alpha} du \\
 &= \frac{\Gamma(\beta+\theta)}{\Gamma(\gamma+\theta/\alpha)} 
      \sum_{k=0}^\infty \frac{(-1)^k}{k!} \frac{t^{\gamma+\theta/\alpha+k-1} }{\Gamma(\beta-\alpha\gamma-\alpha k)}   \Bstrut\\
 \end{aligned}  &
\Gamma(\beta+\theta) E^{\gamma+\theta/\alpha}_{\alpha,\beta+\theta}(-x) \TBstrut\\
\hline
\hline 
&  \multicolumn{1}{l |} {\textrm{\footnotesize Corollary~\ref{cor:ML2pardistribution}: $\beta-\alpha\gamma=1-\alpha$, $\theta>-\alpha\gamma$ }} 
& \multicolumn{1}{l |} {\textrm{\footnotesize $\gamma=(\beta-1)/\alpha+1$}}  \\
 \textrm{\scriptsize 2}  & 
p_{\alpha,\beta,\gamma,\theta}(t)  =   \Gamma(\beta+\theta) \,  \rho_{\gamma+\theta/\alpha}(t) \, p_\alpha(t) &
\Gamma(\beta+\theta) E^{\gamma+\theta/\alpha}_{\alpha,\beta+\theta}(-x) \Bstrut\\
\hline 
&  \multicolumn{1}{l |} {\textrm{\footnotesize Corollary~\ref{cor:ML2pardistribution}: $\beta=\gamma=1$, $\theta>-\alpha$: 
   $P_{\alpha,\theta}\equiv {\rm ML}(\alpha,\theta)$ }} &  \\
 \textrm{\scriptsize 2a}  & 
 \multicolumn{1}{c |} {
\begin{array}{c}
p_{\alpha,\theta}(t)  
 = \frac{\Gamma(1+\theta)}{\Gamma(1+\theta/\alpha)} \, t^{\theta/\alpha} \, p_\alpha(t) \Tstrut\\
\end{array}} &
\Gamma(1+\theta) E^{1+\theta/\alpha}_{\alpha,1+\theta}(-x) \Bstrut\\
\hline 
\hline 
&  \multicolumn{1}{l |} {\textrm{\footnotesize Corollary~\ref{cor:ML4parconv1Composition}: $\{\alpha, \sigma\}$ Mixing}} &  
 \multicolumn{1}{l |} {\textrm{\footnotesize Composition with $\lambda x^\sigma$ }} \\
 & \multicolumn{1}{l |} {\textrm{\footnotesize $0<\alpha<1, 0<\sigma<1, -\theta<\alpha\gamma<\beta; \,\lambda>0$}} & \\
\textrm{\scriptsize 3} & 
 \multicolumn{1}{c |} {
\begin{array}{c}
\displaystyle \int_0^\infty  f_\sigma(t\vert \lambda u) \, dP_{\alpha,\beta,\gamma,\theta}(u) \TBstrut\\
\begin{aligned}
&= \dfrac{\Gamma(\beta+\theta)}{\pi} {\rm Im} \displaystyle\int_0^\infty e^{-tu} \, 
      E^{\gamma+\theta/\alpha}_{\alpha,\beta+\theta}(-\lambda e^{-i\pi\sigma}u^\sigma) \, du   \Bstrut\\
 &= -\dfrac{1}{\pi}\
\displaystyle \sum_{k=0}^\infty (-\lambda)^k\sin(\pi\sigma k) \, M_{\alpha,\beta,\gamma,\theta;k} \frac{\Gamma(\sigma k+1)}{t^{\sigma k+1}}  \Bstrut\\
 \end{aligned}
\end{array}} &
\Gamma(\beta+\theta) E^{\gamma+\theta/\alpha}_{\alpha,\beta+\theta}(-\lambda x^\sigma)  
\TBstrut\\
\hline 
\multicolumn{3}{l}{\footnotesize  \textrm{Notes: {\footnotesize  ($f_\alpha(t\vert u), \rho_{\gamma}(t)$  are as in Table~\ref{table:MLDensities})}   }} \\
\multicolumn{3}{l}{ \textrm{\footnotesize  $p_{\alpha,\beta,\gamma,\theta}(t)$  is the density 
and $M_{\alpha,\beta,\gamma,\theta;k}$ the  $k^{\rm th}$ moment
of the 4-parameter distribution $P_{\alpha,\beta,\gamma,\theta}(t)$  } } \\
\end{array} \\
\end{equation*}}
\caption{Theorem~\ref{thm:ML4parfunction}: 4-parameter Mittag-Leffler densities and their Laplace transforms.}
\label{table:ML4parDensities}
\end{table}

\newpage
%
\begin{table}[h!]
{\normalsize
\begin{equation*}
\begin{array}{| c  | c | c || c |} 
\hline
\cellcolor{lightgray} \textrm{\footnotesize Distribution of $U$} & 
\cellcolor{lightgray} \textrm{\footnotesize Distribution of $V$} & 
\cellcolor{lightgray} \textrm{\footnotesize Distribution of $T=VU$}  
& \cellcolor{lightgray} \textrm{\footnotesize LT of density of $T$}
\TBstrut\\
\hline
\multicolumn{4}{c}{} \\
\multicolumn{4}{l} {\textrm{\footnotesize General Case of Theorem~\ref{thm:altML4pardistribution}: $0<\alpha<1$, $-\theta<\alpha\gamma<\beta$ }}  \strut\\
\hline
{\rm Beta}\left(\tfrac{\theta}{\alpha}+\gamma,\tfrac{\beta}{\alpha}-\gamma\right) & 
{\rm ML}(\alpha, \beta+\theta) & 
^\dagger{\rm ML}(\alpha, \beta, \gamma, \theta) 
& \Gamma(\beta+\theta) E^{\gamma+\theta/\alpha}_{\alpha,\beta+\theta}(-x) 
\TBstrut\\
\hline
\multicolumn{4}{c}{} \\
\multicolumn{4}{l} {\textrm{\footnotesize 3-parameter Mittag-Leffler  case \cite{BertoinYor, HoJamesLau, Mohle}: 
$0<\alpha<1$, $\theta=0, 0<\alpha\gamma<\beta$ }}  \strut\\
\hline
{\rm Beta}\left(\gamma,\tfrac{\beta}{\alpha}-\gamma\right) & 
{\rm ML}(\alpha, \beta) & 
{\rm ML}(\alpha, \beta,\gamma) 
& \Gamma(\beta) E^{\gamma}_{\alpha,\beta}(-x)
\TBstrut\\
\hline
\multicolumn{4}{c}{} \\
\multicolumn{4}{l} {\textrm{\footnotesize  beta-Mittag-Leffler case \cite{Banderier}: $0<\alpha<1$, $\gamma=0, \beta>0, \theta>0$ }}  \strut\\
\hline
{\rm Beta}\left(\tfrac{\theta}{\alpha},\tfrac{\beta}{\alpha}\right)  & 
{\rm ML}(\alpha, \beta+\theta) & 
{\rm ML}(\alpha, \beta,0, \theta) 
& \Gamma(\beta+\theta) E^{\theta/\alpha}_{\alpha,\beta+\theta}(-x)
\TBstrut\\
\hline
\multicolumn{4}{c}{} \\
\multicolumn{4}{l} {\textrm{\footnotesize $0<\alpha<1$, $\beta-\alpha\gamma=1-\alpha \implies \gamma=(\beta-1)/\alpha+1, \theta>-\alpha\gamma$}}  \strut\\
\hline
{\rm Beta}\left(\tfrac{\theta}{\alpha}+\gamma,\tfrac{1}{\alpha}-1\right) & 
{\rm ML}(\alpha, \beta+\theta) & 
{\rm ML}(\alpha, \beta,\gamma,\theta) 
& \Gamma(\beta+\theta) E^{\gamma+\theta/\alpha}_{\alpha,\beta+\theta}(-x) 
\TBstrut\\
\hline
\multicolumn{4}{c}{} \\
\multicolumn{4}{l} {\textrm{\footnotesize Mittag-Leffler Markov Chain  
\cite{Goldschmidt22, GoldschmidtHaas, HoJamesLau, RembartWinkel2018, RembartWinkel2023, Senizergues}: 
$0<\alpha<1$, $\beta=\gamma=1, \theta>-\alpha, n\ge1$}}  \strut\\
\hline
{\rm Beta}\left(\tfrac{\theta+n-1}{\alpha}+1,\tfrac{1}{\alpha}-1\right) & 
{\rm ML}(\alpha, \theta+n) & 
{\rm ML}(\alpha, \theta+n-1)  & 
 \multicolumn{1}{c |} {\begin{array}{c}
\Gamma(\theta+n)\, \times 
\Tstrut\\
E^{1+(\theta+n-1)/\alpha}_{\alpha,\theta+n}(-x) \Bstrut\\
\end{array} }
\TBstrut\\
\hline
\multicolumn{4}{l} {\textrm{\normalsize $^\dagger{\rm ML}(\alpha, \beta, \gamma, \theta)$ has density: }}  \TBstrut\\
\multicolumn{4}{l} {\textrm{
$\quad p_{\alpha,\beta,\gamma,\theta}(t)
= \displaystyle\int_0^1 p_{\alpha,\beta+\theta}(t/u) \, {\rm beta}(u\vert \theta/\alpha+\gamma, \beta/\alpha-\gamma) \frac{du}{u}$  
 }}  \Bstrut\\
\end{array} 
\end{equation*}}
\caption{Theorem~\ref{thm:altML4pardistribution}: Distributions  of products of beta and  Mittag-Leffler variables}
\label{table:ML4parBetaML}
\end{table}
\newpage

%
\begin{table}[h!]
{\normalsize
\begin{equation*}
\begin{array}{| c | c  | c | } 
\cline{2-3} 
\multicolumn{1}{c|}{} & 
\multicolumn{1}{ c |}{\cellcolor{lightgray} \textrm{Density}} & 
 \multicolumn{1}{c |} {\cellcolor{lightgray}
\begin{array}{c}
\textrm{Laplace Transform} \\
\textrm{of density} 
\end{array}} 
\TBstrut\\
\hline 
& \multicolumn{1}{l |} { \textrm{\footnotesize Corollary~\ref{cor:MLconv2}: $0<\alpha<1; \lambda>0$ }} & \\
\textrm{\scriptsize 1}  & 
 \multicolumn{1}{c |} {
\begin{array}{c}
 \dfrac{1}{\lambda\pi}\,{\rm Im}\left \{\widetilde\rho_{1/\alpha-1}(e^{-i\pi\alpha}t^\alpha) \,  \widetilde\rho_{1,\lambda}(e^{-i\pi\alpha}t^\alpha)\right\}  \Bstrut\\
  = \dfrac{\sin\pi\alpha}{\pi}\, \dfrac{\lambda\, t^{\alpha-1}} {\lambda^2+2\lambda  t^\alpha \cos\pi\alpha+t^{2\alpha}}   \Bstrut\\
 \end{array}} &
E_{\alpha}(-\lambda x^\alpha) 
\TBstrut\\ 
 \hline 
\hline 
& \multicolumn{1}{l |} { \textrm{\footnotesize Corollary~\ref{cor:ML3parconv2}:  $0<\alpha<1, 0<\alpha\gamma<\beta; \,\lambda>0$}}  & \\
\textrm{\scriptsize 2}  & 
 \multicolumn{1}{c |} { 
\begin{array}{c}
 \dfrac{1}{\lambda^\gamma \pi}\,{\rm Im}\left \{\widetilde\rho_{\beta/\alpha-\gamma}(e^{-i\pi\alpha}t^\alpha) \, 
     \widetilde\rho_{\gamma,\lambda}(e^{-i\pi\alpha}t^\alpha)\right\} 
 \TBstrut\\
   = \dfrac{1}{\pi}\,{\rm Im} \dfrac{(e^{-i\pi}t)^{\alpha\gamma-\beta}}{(\lambda+  e^{-i\pi\alpha} t^\alpha)^\gamma}   \Bstrut\\
 \end{array}} &
 x^{\beta-1}E^\gamma_{\alpha,\beta}(-\lambda x^\alpha) 
\TBstrut\\ 
\hline 
\hline 
& \multicolumn{1}{l |} { \textrm{\footnotesize Corollary~\ref{cor:ML4parconv2}:  $0<\alpha<1, -\theta<\alpha\gamma<\beta; \,\lambda>0$}}  & \\
\textrm{\scriptsize 3}  & 
 \multicolumn{1}{c |} { 
\begin{array}{c}
 \dfrac{1}{\lambda^{\gamma+\theta/\alpha} \pi}\,{\rm Im}\left \{\widetilde\rho_{\beta/\alpha-\gamma}(e^{-i\pi\alpha}t^\alpha) \, 
     \widetilde\rho_{\gamma+\theta/\alpha,\lambda}(e^{-i\pi\alpha}t^\alpha)\right\} 
 \TBstrut\\
   = \dfrac{1}{\pi}\,{\rm Im} \dfrac{(e^{-i\pi}t)^{\alpha\gamma-\beta}}{(\lambda+  e^{-i\pi\alpha} t^\alpha)^{\gamma+\theta/\alpha}}   \Bstrut\\
 \end{array}} &
 x^{\beta+\theta-1}E^{\gamma+\theta/\alpha}_{\alpha,\beta+\theta}(-\lambda x^\alpha) 
\TBstrut\\ 
\hline
\multicolumn{3}{l}{\footnotesize  \textrm{Notes:}} \\
\multicolumn{3}{l}{\textrm{\footnotesize  $\rho_{\gamma,\lambda}(t)$  is the  gamma density with Laplace transform 
$\widetilde\rho_{\gamma,\lambda}(x)=\lambda^\gamma/(\lambda+x)^\gamma$ $(\gamma,\lambda>0)$} } \\
\multicolumn{3}{l}{\textrm{\footnotesize  $\rho_{\gamma}(t)$  is the  unnormalised power density with Laplace transform 
$\widetilde\rho_{\gamma}(x)=x^{-\gamma}$ } } \\
\end{array} \\
\end{equation*}}
\caption{Other  densities and their Laplace transforms. 
Only Case~1 is a probability  density, coincident with Case~2a of Table~\ref{table:MLDensities}. }
\label{table:OtherDensities}
\end{table}

\begin{table}[h!]
{\normalsize
\begin{equation*}
\begin{array}{ | c  | c |  c |} 
\hline
\multicolumn{1}{| c |}{\textrm{\footnotesize  Density 1}} & 
 \multicolumn{1}{| c |} {
 \begin{array}{c}   \textrm{\footnotesize  Generalised Positive Linnik density} \\ \textrm{\footnotesize(LT of density 1)} \end{array}}  &
 \multicolumn{1}{| c |} {\textrm{\footnotesize  LT of GPL density }}  
\TBstrut\\
\hline 
\multicolumn{3}{ l } { \textrm{\footnotesize Corollary~\ref{cor:ML4parconv2}:  
$0<\alpha<1, -\theta<\alpha\gamma=\beta; \,\lambda>0$}}   \Tstrut\\
\hline
\dfrac{1}{\pi}\,{\rm Im} \dfrac{\lambda^{\gamma+\theta/\alpha}}{(\lambda+  e^{-i\pi\alpha} t^\alpha)^{\gamma+\theta/\alpha}}  &
\begin{array}{c}  
\lambda^{\gamma+\theta/\alpha}   x^{\alpha\gamma+\theta-1}E^{\gamma+\theta/\alpha}_{\alpha,\alpha\gamma+\theta}(-\lambda x^\alpha) =   \Tstrut\\
\lambda^{\gamma+\theta/\alpha}  \displaystyle \int_0^\infty  f_\alpha(t \vert  u) \,dG(u\vert \gamma+\theta/\alpha,\lambda)  
\TBstrut\\ 
\end{array} &
\dfrac{\lambda^{\gamma+\theta/\alpha}}{(\lambda+s^\alpha)^{\gamma+\theta/\alpha}}
\TBstrut\\ 
\hline
\multicolumn{1}{| l | } { \textrm{\footnotesize Corollary~\ref{cor:PositiveLinnik}: $\gamma+\theta/\alpha=1$}} & 
\multicolumn{1}{ l | } { \textrm{\footnotesize Positive Linnik density}}  &  \\
 \multicolumn{1}{| c |} {
\begin{array}{c}
\displaystyle \int_0^\infty  t f_\alpha(t\vert \lambda u) \, dP_\alpha(u) =  \TBstrut\\
  \dfrac{\sin\pi\alpha}{\pi}\, \dfrac{\lambda\, t^{\alpha}} {\lambda^2+2\lambda  t^\alpha \cos\pi\alpha+t^{2\alpha}}   \TBstrut\\
 \end{array}} &
 \begin{array}{c}  
\lambda   x^{\alpha-1}E^{1}_{\alpha,\alpha}(-\lambda x^\alpha) \equiv -\dfrac{d}{dx}E_\alpha(-\lambda x^\alpha) \Tstrut\\
= \lambda \displaystyle \int_0^\infty  f_\alpha(t \vert  u) \, e^{-\lambda u}  \, du  
\TBstrut\\ 
\end{array} &
 \dfrac{ \lambda}{\lambda+s^\alpha}
\TBstrut\\ \hline
\end{array} \\
\end{equation*}}
\caption{Generalised Positive Linnik distribution ${\rm PL}(\alpha,\gamma+\theta/\alpha ,\lambda)$, 
with Positive Linnik distribution ${\rm PL}(\alpha,\lambda) \equiv {\rm PL}(\alpha,1 ,\lambda)$}
\label{table:Linnik}
\end{table}

\end{document}